\definecolor{hot}{RGB}{65,105,225}
\def\Supp{{\rm{Supp}\,}}
\def\ord{\text{ord}}
\newcommand{\bA}{{\mathbb A}}
\newcommand{\bC}{{\mathbb C}}
\newcommand{\bN}{{\mathbb N}}
\newcommand{\bP}{{\mathbb P}}
\newcommand{\bQ}{{\mathbb Q}}
\newcommand{\bZ}{{\mathbb Z}}
\def\bT{{\mathbb T}}
\def\bm{{\bf m}}
\def\bn{{\bf n}}
\def\be{{\bf e}}
\def\bt{{\bf t}}
\def\bff{{\bf f}}
\def\bs{{\bf s}}
\def\bsy{\boldsymbol}
\newcommand{\cB}{{\mathcal B}}
\newcommand{\cC}{{\mathcal C}}
\newcommand{\cD}{{\mathcal D}}
\newcommand{\cF}{{\mathcal F}}
\newcommand{\cG}{{\mathcal G}}
\newcommand{\cI}{{\mathcal I}}
\newcommand{\cL}{{\mathcal L}}
\newcommand{\cM}{{\mathcal M}}
\newcommand{\cN}{{\mathcal N}}
\newcommand{\cO}{{\mathcal O}}
\newcommand{\cP}{{\mathcal P}}
\newcommand{\cV}{{\mathcal V}}
\newcommand{\Ima}{{\rm Im\, }}
\newcommand{\rank}{{\rm rank}}
\newcommand{\Hom}{{\rm{Hom}}}
\newcommand{\wti}{\widetilde}
\newcommand{\Spec}{{\rm{Spec\,}}}
\newcommand{\Exp}{{\rm{Exp\,}}}
\newcommand{\codim}{\hbox{\rm codim}\,}
\newcommand{\ra}{\rightarrow}
\newcommand{\id}[1]{\langle  {#1} \rangle}
\def\lra{\longrightarrow}
\def\al{\alpha}
\def\lam{\lambda}
\def\im{{\rm{Im}}}
\def\pa{\partial}
\def\me{\medskip}
\def\bone{\mathbf{1}}
\def\ol{\overline}
\newtheorem{Thm}[]{Theorem}
\newtheorem{Conj}[]{Conjecture}
\newtheorem{Prop}[]{Proposition}
\newtheorem{Cor}[]{Corollary}
\theoremstyle{plain}
\newtheorem{thm}[subsection]{Theorem}
\newtheorem{prop}[subsection]{Proposition}
\newtheorem{lem}[subsection]{Lemma}
\newtheorem{cor}[subsection]{Corollary}
\newtheorem{conj}[subsection]{Conjecture}
\newtheorem{df}[subsection]{Definition}
\theoremstyle{definition}
\newtheorem{rem}[subsection]{Remark}
\newtheorem{exa}[subsection]{Example}
\title[Bernstein-Sato ideals and local systems]{Bernstein-Sato ideals and local systems}
\author{Nero Budur}
\address{KU Leuven, Department of Mathematics,
Celestijnenlaan 200B, B-3001 Leuven, Belgium} 
\email{Nero.Budur@wis.kuleuven.be}
\address{University of Notre Dame, Department of Mathematics, 255 Hurley Hall, IN 46556, USA} \email{nbudur@nd.edu}
\keywords{Bernstein-Sato ideal, Bernstein-Sato polynomial, $b$-function, $\cD$-modules, local systems, cohomology jump loci, characteristic variety, Sabbah specialization, Alexander module, Milnor fiber, Monodromy Conjecture, hyperplane arrangements.}
\subjclass[2010]{14F10, 32S40, 14B05, 32S05, 32S22.}
\begin{document}

\begin{abstract} The topology of smooth quasi-projective complex varieties is very restrictive. One aspect of this statement is the fact that natural strata of local systems, called cohomology support loci, have a rigid structure: they consist of torsion-translated subtori in a complex torus. We propose and partially confirm a relation between Bernstein-Sato ideals and local systems. This relation gives yet a different point of view on the nature of the structure of cohomology support loci of local systems. The main result is a partial generalization to the case of a collection of polynomials of the theorem of Malgrange and Kashiwara which states that the Bernstein-Sato polynomial of a hypersurface recovers the monodromy eigenvalues of the Milnor fibers of the hypersurface. We also address  a multi-variable version of the Monodromy Conjecture, prove that it follows from the usual single-variable Monodromy Conjecture, and prove it in the case of hyperplane arrangements.
 \end{abstract}

\maketitle

\tableofcontents

\section{Introduction}

\subsection{Bernstein-Sato ideals and local systems.} We first propose a conjectural picture relating Bernstein-Sato ideals with local systems. It is known that the topology of smooth quasi-projective complex varieties is very restrictive. One aspect of this statement is the fact that natural strata of local systems, called cohomology jump loci, have a rigid structure: they consist of torsion-translated subtori in a complex torus, see Budur-Wang \cite{BW}. The structure of cohomology jump loci in various setups is the main theme in previous works such as Green-Lazarsfeld \cite{GL1, GL2}, Arapura \cite{Ar1, Ar2}, Simpson \cite{Si},  Budur \cite{B-ULS}, Libgober \cite{Li}, Dimca-Papadima-Suciu \cite{DPS}, Popa-Schnell \cite{PoS}, Dimca-Papadima \cite{DP}. The union of the  cohomology jump loci forms the cohomology support locus. The conjectural picture we propose gives yet a different point of view on the nature of the structure of cohomology support loci of local systems.

To be more precise, let $F=(f_1,\ldots ,f_r)$ be a collection of non-zero polynomials $f_j$ in $\bC[x_1,\ldots ,x_n]$. The {\it Bernstein-Sato ideal of $F$} is the ideal $B_F$ generated by polynomials $b\in \bC[s_1,\ldots ,s_r]$ such that 
$$
b(s_1,\ldots ,s_r)f_1^{s_1}\cdots f_r^{s_r} =Pf_1^{s_1+1}\cdots f_r^{s_r+1}$$
for some algebraic differential operator $$P\in \bC\left[x_1,\ldots,x_n,\frac{\pa}{\pa x_1},\ldots ,\frac{\pa}{\pa x_n},s_1,\ldots ,s_r\right].$$
The existence of non-zero Bernstein-Sato ideals $B_F$ has been proved by Sabbah \cite{Sab-p}, see also Bahloul \cite{Bah} and Gyoja \cite{G}. In the one-variable case $r=1$, the monic generator of the ideal $B_F$ is the classical Bernstein-Sato polynomial. In general, the ideal $B_F$ and its radical are not always principal, for such examples see Bahloul-Oaku \cite[\S 4.1]{BO}. The ideal $B_F$ is generated by polynomials with coefficients in the subfield of $\bC$ generated by the coefficients of $F$ \cite[\S 4]{BO}.

\begin{Conj}\label{conj1}
The Bernstein-Sato ideal $B_F$ is generated by products of linear polynomials of the form
$$
\al_{1}s_1+\ldots +\al_rs_r+\al
$$
with $\al_j\in\bQ_{\ge 0}$, and $\al\in\bQ_{>0}$.
\end{Conj}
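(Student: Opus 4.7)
The plan is to attack the conjecture in three stages, building on Sabbah's foundational existence theorem for $B_F$ and refining toward the prescribed rationality, sign conditions, and factorization structure.

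First, I would establish the geometric part of the conjecture: that the zero locus $Z(B_F) \subset \bC^r$ is a finite union of affine hyperplanes defined over $\bQ$. Sabbah's proof of the existence of non-zero $B_F$ proceeds by constructing a canonical multi-$V$-filtration on the $\cD[s_1,\ldots,s_r]$-module generated by $f_1^{s_1}\cdots f_r^{s_r}$, and one can extract from this filtration a list of candidate hyperplanes carved out by characteristic rational slopes of $F$. So the first task is to revisit Sabbah's construction and verify that the ideal $\sqrt{B_F}$ cuts out precisely such a union of rational hyperplanes.

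Second, I would pin down the sign conditions $\al_j \in \bQ_{\ge 0}$ and $\al \in \bQ_{>0}$ by the standard technique of restriction to lines. For any $\bm{a} = (a_1,\ldots,a_r) \in \bZ_{\ge 0}^r$, specializing $s_j = a_j t$ sends $B_F$ into an ideal of $\bC[t]$ containing the classical Bernstein-Sato polynomial of $g := f_1^{a_1}\cdots f_r^{a_r}$; by the theorem of Malgrange and Kashiwara, the roots of $b_g$ are negative rational numbers. As $\bm{a}$ varies through $\bZ_{\ge 0}^r$, the union of such lines meets every hyperplane in $\bC^r$ with positive direction, and Kashiwara's rationality forces each component of $Z(B_F)$ to have the desired form $\al_1 s_1 + \cdots + \al_r s_r + \al$ with non-negative rational slopes and positive rational constant term. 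The non-negativity of the $\al_j$ should additionally be cross-checked by an analytic-continuation argument: for $\mathrm{Re}(s_j) \gg 0$ the distribution $|f|^{2s}$ is holomorphic, so no pole of the meromorphic continuation can sit on a hyperplane meeting the positive orthant at infinity.

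The main obstacle — and the genuinely new content beyond what Sabbah plus restriction-to-lines gives — is the third stage: upgrading from $\sqrt{B_F}$ to $B_F$ itself. In the case $r=1$ the passage from rationality of roots to the factorization of $b_f(s)$ into linear factors is automatic because $\bC[s]$ is a PID, but for $r \ge 2$ the ideal $B_F$ can be non-radical, non-principal, and not even a complete intersection (as in the Bahloul-Oaku examples cited above), so each linear form has to be exhibited with the correct multiplicity inside an actual generating set. My strategy here would be to generalize the Malgrange-Kashiwara identification of $b$-polynomial roots with monodromy eigenvalues of Milnor fibers: associate to $F$ a Sabbah specialization complex whose multi-variable monodromy eigenvalues are governed by the exponentials of the hyperplanes in $Z(B_F)$, and then track Jordan block sizes to produce the required multiplicities. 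This is precisely the multi-variable analog of Kashiwara's V-filtration argument, and controlling the non-reduced structure of $B_F$ via nearby-cycle data is where I expect the deepest input to be needed.
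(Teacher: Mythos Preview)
The statement you are attempting to prove is labeled \emph{Conjecture} in the paper, and indeed the paper does not prove it; it is put forward as an open problem. The paper remarks that the one-variable case $r=1$ is Kashiwara's theorem, and that Sabbah and Gyoja show $B_F$ contains at least one element of the required shape, but the general assertion is left open. So there is no ``paper's own proof'' to compare your proposal against; what you have written is a research outline rather than a proof, and you are candid about this in your third stage.

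On the substance of your outline, a few cautions. Your Stage~2 restriction-to-lines argument has a direction issue: specializing $s_j\mapsto a_jt$ does \emph{not} straightforwardly send $B_F$ into an ideal containing the classical $b_g$ for $g=\prod_j f_j^{a_j}$. The relevant map in the paper (Lemma applied to a row vector $M=\bm{a}$) gives $\wti{\phi}_{\bm{a}}^\#(B_F^{\bm{a}})\subset B_{F^{\bm{a}}}$, which involves $B_F^{\bm{a}}$ rather than $B_F=B_F^{\bone}$; passing between these requires the shift lemmas and only matches up after applying $\Exp$, not at the level of the hyperplanes themselves (cf.\ the paper's Example~\ref{exaE}). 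The paper's own partial result in this direction (Proposition~\ref{PropImp}) derives the sign conditions for codimension-one components of $V(B_{F,x})$ by a different route: it \emph{assumes} Conjecture~\ref{conj2}, deduces that $V(B_{F,x})$ is a union of rational linear subspaces, and then uses Gyoja's containment of $V(B_{F,x})$ in a union of correctly-signed hyperplanes together with restriction to coordinate axes. Your Stage~1 goal---that $\sqrt{B_F}$ cuts out a union of rational hyperplanes---is exactly what the paper isolates as following from Conjecture~\ref{conj2}, and is not extracted from Sabbah's $V$-filtration alone. Finally, you correctly identify Stage~3 (the non-reduced, non-principal structure of $B_F$) as the genuine obstruction, and the paper offers no mechanism for it either.
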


This would imply the same for the radical ideal of $B_F$. The conjecture would refine a result of Sabbah \cite{Sab-p} and Gyoja \cite{G} which states that $B_F$ contains at least one element of this type. In the one-variable case $r=1$, Conjecture \ref{conj1} is due to Kashiwara \cite{Ka}. When $n=2$, every element of $B_F$ is divisible by the linear polynomials defining $(r-1)$-dimensional faces of the jumping polytopes of the local mixed multiplier ideals of $f_1,\ldots, f_r$, by Cassou-Nogu\`es and Libgober \cite[Theorem 4.1]{CL}.

We have originally arrived to conjecture that Bernstein-Sato ideals have this particular shape from computing examples with the library {\tt dmod.lib} in  {\sc Singular}  \cite{Sing, SingDmod}. This paper evolved out of the effort to understand this behavior.  The interpretation of this behavior in terms of cohomology support loci of local systems which we give in this paper is new to our knowledge.

Apart from the positivity statement, Conjecture \ref{conj1} can be seen as the consequence of the following situation, similar to ones occurring frequently in arithmetic geometry and model theory. Let 
$$
\Exp: \bC^r\lra (\bC^*)^r
$$
be the map $x\mapsto \exp(2\pi i x)$. We conjecture that $\Exp$ of the zero locus $V(B_F)$ of the Bernstein-Sato ideal $B_F$ satisfies the conditions of the following result of M. Laurent:

\begin{Thm}\cite{L}\label{ThmE}
Let $Z$ be a Zariski closed subset of $(\bC^*)^r$ defined over  $\ol{\bQ}$ with a Zariski dense subset of torsion points. Then $Z$ is a finite union of torsion translates of complex subtori.
\end{Thm}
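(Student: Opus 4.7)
The plan is to reduce to the irreducible case and then invoke a Galois-theoretic argument on roots of unity to force the torus structure. First, decompose $Z = Z_1 \cup \cdots \cup Z_s$ into irreducible components, and write $\mu := (\mu_\infty)^r \subset (\bC^*)^r$ for the torsion subgroup. Since $Z \cap \mu = \bigcup_i (Z_i \cap \mu)$ is Zariski dense in $Z$ and each $\ol{Z_i \cap \mu}$ is a closed subset of the irreducible $Z_i$, we must have $\ol{Z_i \cap \mu} = Z_i$ for every $i$. So each component inherits a Zariski dense set of torsion points, and it suffices to prove the theorem for $Z$ irreducible.

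Second, fix a number field $K \subset \ol{\bQ}$ over which $Z$ is defined. For a torsion point $\zeta = (\zeta_1,\ldots,\zeta_r) \in Z$ of exact order $N$, the cyclotomic character $\chi$ identifies $\mathrm{Gal}(K(\mu_N)/K)$ with a subgroup of $(\bZ/N\bZ)^*$ whose index is bounded by $c := [K \cap \bQ(\mu_\infty):\bQ]$, a constant depending only on $K$. Any $\sigma \in \mathrm{Gal}(\ol{\bQ}/K)$ acts coordinatewise by $\sigma(\zeta) = \zeta^{\chi(\sigma)}$, and must preserve $Z$ since $Z$ is $K$-rational. Hence the power-orbit $\{\zeta^k : k \in \mathrm{Im}(\chi)\}$, of cardinality at least $\varphi(N)/c$, is contained in $Z$.

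The remaining step is to show that an irreducible subvariety $Z \subset (\bC^*)^r$ defined over a number field, in which every torsion point of order $N$ has a power-orbit of size $\geq \varphi(N)/c$ inside $Z$, must itself be a torsion translate of a subtorus. Two routes are available: invoke the Bogomolov conjecture for tori (Zhang), which asserts that the Zariski closure of the torsion points -- which are of height zero -- is a finite union of torsion translates of subtori, so that irreducibility of $Z$ forces the conclusion; or, in the spirit of Bombieri--Zannier, use a Chebotarev/pigeonhole argument to produce infinitely many primes $p$ with $[p](Z) \subseteq Z$ (where $[p]:x \mapsto x^p$), and then deduce by a dimension and degree count that any irreducible subvariety of $(\bC^*)^r$ stable under $[p]$ for arbitrarily large primes $p$ is a torsion translate of a subtorus. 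The main obstacle is precisely this last step: the passage from \emph{many Galois conjugates of each torsion point lie in $Z$} to the rigid subtorus structure genuinely requires arithmetic input -- Baker's theorem on linear forms in logarithms (as in Laurent's original proof), the Bogomolov conjecture, or the power-map iteration argument -- whereas the reductions in Steps 1 and 2 are essentially formal.
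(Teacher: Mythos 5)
The paper does not supply a proof of Theorem \ref{ThmE}: it is invoked as a black box from Laurent's paper \cite{L} (the multiplicative Manin--Mumford theorem), so there is no in-paper argument to compare yours against. On the merits, your Steps 1 and 2 are correct and are the standard formal preliminaries: the reduction to irreducible $Z$ via $\overline{Z_i\cap\mu}=Z_i$ for each component $Z_i$ is sound, and the use of the cyclotomic character together with the bounded cokernel constant $c=[K\cap\bQ(\mu_\infty):\bQ]$ to place a power-orbit of size at least $\varphi(N)/c$ inside $Z$ is exactly right.

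The gap is where you place it, and it is genuine: nothing in Steps 1--2 yields the torsion-coset structure, and you do not prove the implication from ``a large power-orbit of every torsion point lies in $Z$'' to ``$Z$ is a torsion translate of a subtorus'' --- you enumerate tools that could, without executing any. Of the routes you list, note that Zhang's Bogomolov theorem for tori postdates \cite{L} and in the height-zero (torsion) case is at least as strong as Laurent's statement, so citing it is logically valid but not an independent argument; and the Chebotarev/$[p]$-stability route still requires you to prove (i) that your Galois data actually produces a positive-density set of primes $p$ with $[p](Z)\subseteq Z$ (density matters; a single $p$ is not enough), and (ii) that an irreducible $\ol\bQ$-subvariety of $(\bC^*)^r$ stable under $[p]$ for arbitrarily large $p$ is a torsion coset, typically via a degree-growth estimate combined with a lattice argument on the character group --- neither of which you supply. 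For the record, Laurent's own proof in \cite{L} goes through Baker-type lower bounds for linear forms in logarithms, which is a different mechanism from either of your two routes; so even as a sketch your proposal is best understood as pointing at alternative modern proofs rather than reconstructing the cited one.
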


Since in all computed examples $\Exp (V(B_F))$ satisfies the assumptions of Theorem \ref{ThmE}, the interesting question is then: where do the torsion translates of subtori come from? Next, we will formulate a  conjecture answering this question. We will then prove this conjecture in one direction and almost prove it in the other direction as well. 

The idea is that one can produce lots of torsion points by restricting to one-parameter subgroups using the classical result of Malgrange and Kashiwara for the hypersurfaces $f_1^{m_1}\ldots f_r^{m_r}$. Then the torsion-translated subtori are obtained by interpolating over all one-parameter restrictions the Milnor monodromies, or equivalently the nearby cycles complexes, of these hypersurfaces. The interpolation is achieved by Sabbah's specialization complex attached to $F=(f_1,\ldots ,f_r)$. We  show that the support of the Sabbah's specialization complex is related with cohomology support loci of rank one local systems.

Let us give more details now. It is important for the rest of the paper to work locally at a point $x$ in $$X:=\bC^n.$$ In this case, we replace in all the above $B_F$ by the local Bernstein-Sato ideal $B_{F,x}$ of the germ of $F$ at $x$ and we also propose the local version of Conjecture \ref{conj1}. It is known that 
$$B_F=\bigcap_{x\in X}B_{F,x},$$
see \cite[Corollary 6]{BO}. Thus, letting $V(I)$ denote the zero locus of an ideal $I$,

$$
V(B_F)=\bigcup_{x\in X}V(B_{F,x}),
$$
so the local version implies the global version of Conjecture \ref{conj1} for the radical ideals. Moreover, this is a finite union since there is a constructible stratification of $X$ such that for $x$ running over a given stratum the Bernstein-Sato ideal at $x$ is constant \cite{BMM}.

The relation with local systems is in two steps. First, we propose a generalization of the well-known result of Kashiwara \cite{Kas} and Malgrange \cite{Mal} which states that the roots of the classical Bernstein-Sato polynomial of a polynomial germ $f$ give the monodromy eigenvalues on the Milnor fiber. In this case, the cohomology of the Milnor fiber is packaged into Deligne's nearby cycles complex $\psi_f\bC_X$. When $r\ge 1$, a generalization of Deligne's nearby cycles functor is the {\it Sabbah specialization functor}
$$\psi_{F}: D^b_c(X,\bC) \ra D^b_c(D, A),$$
where $$D:=\bigcup_{j=1}^rV(f_j)$$ is the union of the zero loci of the $f_j$,
$$A:=\bC[t_1,t_1^{-1},\ldots,t_r,t_r^{-1}],$$
and $D^b_c(.,R)$ is the bounded derived category of constructible sheaves in the analytic topology over a ring $R$.  This functor has been introduced in \cite{Sab}. The action of $A$ on $\psi_{F}\bC_X$ generalizes the monodromy of the Milnor fiber from the case $r=1$. 

For a point $x$ in $D$, denote by 
$$
\Supp_x(\psi_F\bC_X)
$$
the support of $\psi_F\bC_X$ at $x$ as an $A$-module, see Definition \ref{eqSupp}. The ambient space of the support is the torus $(\bC^*)^r$ with affine coordinate ring $A$. For our purposes, we have to take into account the possibility that some $f_j$ do not vanish at $x$, and thus we are lead 
to define the {\it uniform support} 
$$
\Supp_x^{unif}(\psi_F\bC_X) \quad \subset\quad (\bC^*)^r,
$$
see Definition \ref{dfUS}. See \cite[3.8.1]{Ni} for the next result in the $l$-adic setting:

\begin{Thm}\label{ThmGenLib} $\Supp ^{unif}_x (\psi_F\bC_X)$ is a finite union of torsion translated subtori of $(\bC^*)^r$.
\end{Thm}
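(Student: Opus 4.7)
The plan is to apply Laurent's theorem (Theorem \ref{ThmE}) to $Z := \Supp^{unif}_x(\psi_F\bC_X) \subset (\bC^*)^r$. Three conditions must be checked: (i) $Z$ is Zariski closed, (ii) $Z$ is defined over $\ol{\bQ}$, and (iii) the torsion points of $Z$ are Zariski dense in $Z$.

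Items (i) and (ii) are formal consequences of the construction. The Sabbah specialization $\psi_F\bC_X$ is a bounded constructible complex of $A$-modules whose stalk cohomologies at $x$ are finitely generated over $A$, so $\Supp_x(\psi_F\bC_X)$ is a closed subscheme of $\Spec A = (\bC^*)^r$, and the uniform modification preserves closedness. The canonical $\bQ$-structure on $\bC_X$ descends to $\psi_F\bC_X$, so the annihilator of each stalk cohomology, and in turn $Z$, is defined over $\bQ$.

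The substantive step is (iii), which proceeds via restriction to monomial one-parameter subgroups. For each $\bm = (m_1, \ldots, m_r) \in \bN_{>0}^r$, set $f^\bm := f_1^{m_1} \cdots f_r^{m_r}$ and let $\gamma_\bm : \bC^* \to (\bC^*)^r$ be the one-parameter subgroup $t \mapsto (t^{m_1}, \ldots, t^{m_r})$. A structural property of the Sabbah specialization, established in the $\ell$-adic setting by Nicaise \cite[3.8.1]{Ni} and provable complex-analytically either directly from Sabbah's construction or via Verdier specialization, is that pullback of $\psi_F\bC_X$ along $\gamma_\bm$ recovers, on stalks at $x$, the classical Deligne nearby cycles $\psi_{f^\bm}\bC_X$ equipped with its Milnor monodromy action. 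By the Monodromy Theorem, these monodromy eigenvalues are roots of unity, so $\gamma_\bm^{-1}(Z)$ is a finite set of torsion points in $\bC^*$. Hence $\bigcup_\bm \gamma_\bm(\gamma_\bm^{-1}(Z))$ is a set of torsion points of $(\bC^*)^r$ contained in $Z$.

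The final step is to verify that these torsion points are Zariski dense in $Z$. This is precisely the function of the uniform modification: for indices $j$ with $f_j(x) \neq 0$ the naive support collapses along the $t_j$-coordinate, and the uniform version extends it so that monomial curves $\gamma_\bm(\bC^*)$ with $\bm \in \bN_{>0}^r$ meet every irreducible component of $Z$ in a Zariski-dense subset. Laurent's theorem then delivers the desired conclusion. The principal obstacle is this interplay between the comparison $\gamma_\bm^*\psi_F\bC_X \cong \psi_{f^\bm}\bC_X$ on stalks and the precise form of the uniform support needed to guarantee that one-parameter restrictions detect every irreducible component of $Z$; neither ingredient is entirely formal.
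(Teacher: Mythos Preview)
Your strategy differs from the paper's, and the gap you flag in your final sentence is genuine and, as stated, fatal.

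The paper does \emph{not} argue via Laurent's theorem. Instead it reduces, via Lemma~\ref{lemRedSp} and Proposition~\ref{propSuppSp}, to the case where the $f_j$ vanishing at $x$ define mutually distinct reduced irreducible germs; then Theorem~\ref{thm2} identifies $\Supp_x(\psi_F\bC_X)$ with the cohomology support locus $\cV(U_{F,x})$, and Libgober's theorem (Example~\ref{exaB}) gives the torsion-translated-subtori structure directly. Pulling back along the torus homomorphism $\phi_M$ finishes the general case. No density argument is needed.

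Your approach, by contrast, needs to show that the set $S^o\cap Z$ of points on $Z$ hit by monomial one-parameter curves is Zariski dense in $Z$. You correctly observe (via Proposition~\ref{propSuppSp} and the Monodromy Theorem) that every such point is torsion, but Zariski density of $S^o$ in $(\bC^*)^r$ does \emph{not} imply Zariski density of $S^o\cap Z$ in $Z$: a single non-torsion point, or more generally an irreducible component of $Z$ of codimension $\ge 2$, can avoid $S^o$ entirely. Indeed, in the paper's own use of this circle of ideas (see~\ref{subsPT}), the density of $S^o\cap\Supp^{unif}_x(\psi_F\bC_X)$ is obtained only \emph{after} invoking Theorem~\ref{ThmGenLib} to know the components are torsion-translated subtori and then citing that torsion points are dense in such. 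So your argument as written is circular: the density step presupposes exactly the structural conclusion you are trying to reach. The uniform modification addresses only the collapse along coordinates $t_j$ with $f_j(x)\ne 0$; it does nothing to guarantee that one-parameter curves detect higher-codimension components or fill out codimension-one components densely.

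To repair this you would need an independent argument that $S^o\cap Z$ is dense in $Z$, and none is apparent without already knowing the structure of $Z$ --- which is why the paper routes through Theorem~\ref{thm2} and Libgober's result instead.
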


The following would generalize the classical result of Kashiwara and Malgrange. 

\begin{Conj}\label{conj2}
$$\Exp( V({B}_{F,x})) = \bigcup_{y \in D \text { near }x}\Supp ^{unif}_y (\psi_F\bC_X).$$
\end{Conj}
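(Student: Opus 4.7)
My plan is to relate $B_{F,x}$ and $\psi_F\bC_X$ through Sabbah's multi-V-filtration on the $\cD_{X,x}[\bs]$-module
$$\cM_x := \cO_{X,x}[\bs, (f_1\cdots f_r)^{-1}]\cdot \prod_{j=1}^r f_j^{s_j}.$$
The defining functional equation for $b \in B_{F,x}$ controls the jumps of this filtration, while by Sabbah's construction the multi-graded pieces of the filtration compute $\psi_F\bC_X$ via a multi-variable Riemann--Hilbert correspondence. The torus $(\bC^*)^r$ parametrizes characters of $A$ and eigen-components of both objects, with $\Exp$ as the translation device.

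For $\bigcup_y \Supp^{unif}_y(\psi_F\bC_X) \subset \Exp(V(B_{F,x}))$ I would reduce to the classical one-variable case by restricting to integer one-parameter subgroups. Fix $\bm=(m_1,\ldots,m_r) \in \bZ^r_{\ge 0}$ and set $g:=f_1^{m_1}\cdots f_r^{m_r}$. The pullback of $\psi_F\bC_X$ along $t\mapsto (t^{m_1},\ldots,t^{m_r})$ is related, via the functoriality of Sabbah's construction, to $\psi_g\bC_X$, so any monodromy eigenvalue $\mu$ of $\psi_g$ at $y$ equals $\lambda_1^{m_1}\cdots \lambda_r^{m_r}$ for some $\lambda \in \Supp^{unif}_y(\psi_F\bC_X)$. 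Kashiwara--Malgrange identifies $\mu$ with $e^{-2\pi i \alpha}$ for some root $\alpha$ of $b_g$. Since the restriction of any $b\in B_{F,x}$ along the line $s_j=m_j s$ is (up to an integer translation and rescaling) divisible by $b_g$, the character $\lambda$ must satisfy the full family of restrictions of $B_{F,x}$; Theorem \ref{ThmGenLib} and Laurent's theorem (Theorem \ref{ThmE}) then let one interpolate these one-parameter restrictions to conclude $\log \lambda \in V(B_{F,x})$.

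For the converse $\Exp(V(B_{F,x})) \subset \bigcup_y \Supp^{unif}_y(\psi_F\bC_X)$, the idea is to argue that a character $\Exp(\boldsymbol{\alpha})$ with $\boldsymbol{\alpha} \in V(B_{F,x})$ must label a non-trivial slope of the multi-V-filtration on $\cM_x$, and that such a slope promotes via Riemann--Hilbert to a non-vanishing contribution to $\psi_F\bC_X$ at some $y\in D$ close to $x$. The uniform support notion is designed precisely to absorb stalks at $y \ne x$, where the relevant cohomology genuinely lives after a small perturbation inside $D$.

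The main obstacle lies in this second direction, and is why only a partial result is expected. Although $B_{F,x}$ records algebraic slopes of the multi-V-filtration, lifting such a slope to a non-vanishing stalk of the topological specialization requires a uniform, generation-type argument across $(\bC^*)^r$ that rules out ``embedded'' or spurious slopes annihilated under Riemann--Hilbert. In the classical $r=1$ case one exploits finite length and the cyclicity of $\cM$ over $\cD_X[s]$ to conclude that every root of $b_f$ is realized; the multi-variable analogue is delicate because the corresponding finiteness must hold uniformly in all directions of the torus, and the precise relationship between the codimension-one components of $V(B_{F,x})$ and the multi-graded pieces of $\psi_F\bC_X$ appears to fail in general without additional non-resonance hypotheses.
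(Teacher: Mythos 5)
The statement you are attempting to prove is labeled \emph{Conjecture~\ref{conj2}} in the paper and is left open; there is no proof in the paper to match your attempt against. What the paper does prove is the inclusion ``$\supset$'' (Theorem~\ref{thmConjIn}) and, for ``$\subset$'', only the conditional reduction of Proposition~\ref{PropCondInd}. Your sketch correctly mirrors this asymmetry, but the partial arguments have concrete gaps.

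For the containment $\bigcup_y \Supp^{unif}_y(\psi_F\bC_X) \subset \Exp(V(B_{F,x}))$, your strategy of restricting to one-parameter subgroups and invoking Malgrange--Kashiwara is exactly the ``geometric proof of a weaker version'' given in the paper (\S\ref{subsPT}); however, that route produces only the \emph{analytic Zariski closure} of $\Exp(V(B_{F,x}))$ on the left, not the set itself. Two points in your sketch need repair. First, the restriction of a $b\in B_{F,x}=B_F^{\bone}$ along $s\mapsto(m_1s,\dots,m_rs)$ is not divisible by $b_g$ in general; what Lemma~\ref{lemwphi} gives is that the restriction of elements of $B_F^{(m_1,\dots,m_r)}$ lies in $\id{b_g}$, and one must pass through Lemma~\ref{lemExpEq} to convert this to a statement about $B_F$ \emph{after exponentiating}. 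Second, ``interpolating'' via Laurent's theorem is circular here: to conclude that density of $S^o\cap\Exp(V(B_{F,x}))$ captures all of $\Exp(V(B_{F,x}))$, you would need $\Exp(V(B_{F,x}))$ to already be a finite union of torsion-translated subtori, which is essentially Conjecture~\ref{conj1}. The paper's actual proof of Theorem~\ref{thmConjIn} circumvents both issues by a direct $\cD$-module argument: if $\Exp(\al)\notin\Exp(V(B_{F,x}))$ then $\cP_{\al}=\cP_{\al+m\bone}$ for all integers $m$, so by the explicit Riemann--Hilbert correspondence (Theorem~\ref{thmDR}) $Rj_*\cL_{\al}=IC(\cL_{\al})$ near $x$, forcing $H^\bullet(U_{F,x},\cL_\al)=0$.

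For the converse $\Exp(V(B_{F,x})) \subset \bigcup_y \Supp^{unif}_y(\psi_F\bC_X)$, your diagnosis of the difficulty (the failure of a uniform, Nakayama-type finiteness across all directions of the torus when $r>1$) agrees with the paper's discussion after Lemma~\ref{lemAssu}. But your proposed route through Sabbah's multi-$V$-filtration and a multi-variable Riemann--Hilbert correspondence is one the paper deliberately avoids; the paper's Proposition~\ref{PropCondInd} reduces the converse to the explicit algebraic non-degeneracy condition
$$
\sum_{j}(s_j-\al_j)\,\cD_X[\bs]\,\bff^{\bs}\ \not\equiv\ \cD_X[\bs]\,\bff^{\bs}\pmod{\cD_X[\bs]\,\bff^{\bs+\bone}},
$$
via the annihilator description in Lemma~\ref{lemBRef} and the chain $\cN_\al\twoheadrightarrow\cP_\al=\cM_\al=IC$-module comparison in the proof. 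Neither route closes the gap; the conjecture remains open. If you want to push the multi-$V$-filtration approach, you should at minimum state precisely which finiteness or strictness property of the filtration you are assuming, since this is where the known $r=1$ argument (artinian holonomicity of $\cN/\nabla\cN$) fails to generalize.
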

\noindent The union is taken over points $y\in D$ in a small ball around $x$. However, one can take only the general points $y$ of a fine enough stratification of the singular locus of $D$. Conjecture \ref{conj2} almost implies Conjecture \ref{conj1} for codimension part of the radical of the Bernstein-Sato ideal:

\begin{Prop}\label{PropImp}
Assume Conjecture \ref{conj2}. Let $Z$ be an irreducible component of $V(B_{F,x})$. If $Z$ has codimension 1, then $Z$ is the zero locus of a linear polynomial of the form
$$
\al_{1}s_1+\ldots +\al_rs_r+\al
$$
with $\al_j\in\bQ_{\ge 0}$ and $\al\in\bQ_{>0}$. \end{Prop}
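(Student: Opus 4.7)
The plan is to use Conjecture \ref{conj2} together with Theorem \ref{ThmGenLib} to confine $V(B_{F,x})$ to a countable family of affine subspaces of $\bC^r$ defined over $\bQ$, then exploit the irreducibility of $Z$ and its codimension to identify $Z$ with a single affine hyperplane, and finally invoke the Sabbah--Gyoja result recalled in the introduction to arrange positive signs for its coefficients.

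More concretely, by Conjecture \ref{conj2} the image $\Exp(V(B_{F,x}))$ equals the union $\bigcup_y \Supp^{unif}_y(\psi_F\bC_X)$, where one can restrict $y$ to the finitely many general points of a fine enough stratification of $D$ near $x$ (the constructible stratification cited from \cite{BMM}). By Theorem \ref{ThmGenLib}, the image is then contained in a finite union of torsion translated subtori $T_1\cup\cdots\cup T_k\subset(\bC^*)^r$. Each $T_i$ is cut out by monomial equations $t^{a^{(i,j)}}=\zeta_{i,j}$ with $a^{(i,j)}\in\bZ^r$ and $\zeta_{i,j}$ roots of unity, so the preimage $\Exp^{-1}(T_i)$ is a countable disjoint union of affine subspaces of $\bC^r$ cut out by linear equations with integer coefficients and rational constant terms. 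Labelling all such subspaces across $i$ as $A_1, A_2, \ldots$, one obtains $V(B_{F,x})\subset\bigcup_\ell A_\ell$.

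Because $Z$ is irreducible, each intersection $Z\cap A_\ell$ is either $Z$ itself (when $A_\ell\supset Z$) or a proper Zariski-closed subset of $Z$, hence of strictly smaller complex dimension, hence of empty interior in the analytic topology on $Z$. Since $Z$ is closed in $\bC^r$ it is locally compact Hausdorff, hence a Baire space; applying the Baire category theorem to $Z=\bigcup_\ell(Z\cap A_\ell)$ forces at least one summand to have nonempty interior in $Z$, and by the above dichotomy this means $Z\subset A_\ell$ for some $\ell$. Since $\codim Z=1$, the affine subspace $A_\ell$ must itself have codimension $1$, i.e., be a hyperplane $a_1 s_1+\cdots+a_r s_r=c$ with $a_j\in\bZ$ and $c\in\bQ$, and then $Z=A_\ell$.

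To arrange positive signs on the coefficients, I invoke the Sabbah--Gyoja result \cite{Sab-p,G} from the introduction: $B_F\subset B_{F,x}$ contains a polynomial $b=\prod_m L_m$ where each linear factor $L_m$ has the form $\alpha^{(m)}_1 s_1+\cdots+\alpha^{(m)}_r s_r+\alpha^{(m)}$ with $\alpha^{(m)}_j\in\bQ_{\ge 0}$ and $\alpha^{(m)}\in\bQ_{>0}$. Since $Z\subset V(b)=\bigcup_m V(L_m)$ and $Z$ is irreducible, $Z\subset V(L_{m_0})$ for some $m_0$; as both sides are hyperplanes, $Z=V(L_{m_0})$, which is of the required form. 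The main obstacle is the Baire-category step: the covering $\Exp$ has countably many sheets, so $\Exp^{-1}$ of a finite union of subtori is only a countable (not finite) union of affine subspaces, and the irreducibility of $Z$ must be used essentially to confine it to a single such sheet. Once this pinning-down is achieved, the linearity and the rationality are automatic, and the positivity follows from the Sabbah--Gyoja input.
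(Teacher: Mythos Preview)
Your proof is correct and follows the same two-step outline as the paper: first use Conjecture~\ref{conj2} together with Theorem~\ref{ThmGenLib} to force $Z$ to be a rational affine hyperplane, then invoke the Sabbah--Gyoja element of $B_{F,x}$ to pin down the signs. Your Baire-category argument in fact makes rigorous a step the paper leaves implicit (the paper simply asserts ``it follows that $V(B_{F,x})$ is a union of linear subvarieties defined over $\bQ$'' without addressing the passage from the finite union of subtori to the \emph{countable} union of affine preimages). For the sign step, the paper argues by restricting to each coordinate axis $V(\langle s_k\mid k\neq j\rangle)$ to compare $-\al/\al_j$ with the finitely many $-\beta_i/\beta_{ij}$, whereas you use irreducibility directly to conclude $Z=V(L_{m_0})$; your route is cleaner.

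One observation worth making explicit: your final paragraph \emph{by itself} already proves the Proposition, and does not use Conjecture~\ref{conj2} at all. The Sabbah--Gyoja result gives $V(B_{F,x})\subset\bigcup_m V(L_m)$, a \emph{finite} union of hyperplanes of the desired form; irreducibility of $Z$ forces $Z\subset V(L_{m_0})$ for some $m_0$, and since both have codimension~$1$, they coincide. So the Baire step and the appeal to Conjecture~\ref{conj2} are redundant for this particular statement about codimension-one components. The paper's proof has the same redundancy, since its invocation of \cite{G} comes after, but is logically independent of, the linearity step via Conjecture~\ref{conj2}. Conjecture~\ref{conj2} is genuinely needed only if one wants to say something about components of higher codimension.
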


The case of hyperplane arrangements, where the support of the Sabbah specialization complex is a combinatorial invariant, provides a checking ground and evidence for Conjecture \ref{conj2}, see  Corollary \ref{corBFHA} and the Remark thereafter. We show the following partial confirmation of Conjecture \ref{conj2}.

\begin{Thm}\label{thmConjIn}
$$
\Exp( V({B}_{F,x})) \supset \bigcup_{y\in D\text { near }x}\Supp ^{unif}_y (\psi_F\bC_X).
$$
\end{Thm}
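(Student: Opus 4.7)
The plan is to match the two sides via a $\cD$-module whose $\bC[\bs]$-support computes $V(B_F)$, and then transfer supports through a Riemann--Hilbert style comparison. With $\bs=(s_1,\ldots,s_r)$, I introduce the $\cD_X[\bs]$-module
$$
M_F\;:=\;\cD_X[\bs]\cdot f_1^{s_1}\cdots f_r^{s_r}\,\big/\,\cD_X[\bs]\cdot f_1^{s_1+1}\cdots f_r^{s_r+1}.
$$
By construction, the annihilator in $\bC[\bs]$ of the generator $[f^{\bs}]$ of the germ $M_{F,y}$ is precisely the local Bernstein--Sato ideal $B_{F,y}$, so the $\bC[\bs]$-support of $M_{F,y}$ equals $V(B_{F,y})$.

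The crucial input is a Sabbah-type comparison that realises $\psi_F\bC_X$ at $y$, together with its $A$-module structure, as (a shift of) the de Rham complex of $M_{F,y}$ in which the monodromy operators $t_j$ act as $\exp(-2\pi i s_j)$; for $r=1$ this specialises to the classical Malgrange--Kashiwara theorem. Since the de Rham functor is defined by tensoring over $\cD_X$ with the Koszul complex of the $\partial_{x_i}$, any $b\in\bC[\bs]$ annihilating $M_{F,y}$ also annihilates its de Rham complex. Therefore the $\bC[\bs]$-support of $\psi_F\bC_X$ at $y$ is contained in $V(B_{F,y})$, and because the $\bC[\bs]$-action factors through $A$ via $\Exp$, this descends to
$$
\Supp_y(\psi_F\bC_X)\;\subset\;\Exp\!\bigl(V(B_{F,y})\bigr).
$$

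To upgrade $\Supp_y$ to $\Supp^{unif}_y$, observe that whenever $f_j(y)\neq 0$ the function $f_j$ is a unit at $y$, so $s_j$ becomes a free variable for $M_{F,y}$ and $V(B_{F,y})$ is translation invariant in the $s_j$-direction; by Definition \ref{dfUS}, this is exactly the amount by which $\Supp^{unif}_y$ is allowed to be larger than $\Supp_y$ along the corresponding $t_j$-axes, and the two enlargements correspond under $\Exp$. Finally, for $y$ in a small ball around $x$, a Bernstein--Sato identity valid near $x$ remains valid near $y$, so $B_{F,x}\subset B_{F,y}$ and consequently $V(B_{F,y})\subset V(B_{F,x})$. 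Taking the union over $y$ yields the claimed inclusion.

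The main obstacle is the comparison theorem invoked above. The one-variable case is Malgrange--Kashiwara via the $V$-filtration, but the multi-variable situation requires Sabbah's multi-indexed $V$-filtrations and the theory of tame meromorphic connections, together with a careful analysis of how the commuting monodromies $t_j$ act on the de Rham complex of $M_F$. Once this comparison is available, the remaining points -- the uniform-support adjustment and the passage from the germ at $y$ to the germ at $x$ -- are essentially bookkeeping.
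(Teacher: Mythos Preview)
Your proposal has a genuine gap at exactly the point you flag as ``the main obstacle'': the comparison theorem you invoke, identifying the stalk of $\psi_F\bC_X$ with (a shift of) the de Rham complex of $M_F=\cD_X[\bs]f^{\bs}/\cD_X[\bs]f^{\bs+\bone}$ with $t_j$ acting as $\exp(-2\pi i s_j)$, is not available in the literature for $r>1$. Sabbah's multi-index $V$-filtration theory does not produce this statement; indeed, if such a clean comparison existed it would come very close to proving Conjecture~\ref{conj2} in full, not merely the inclusion, and the paper makes explicit (Proposition~\ref{PropCondInd}) that the converse direction is conditional on a nontrivial algebraic hypothesis on $\cN/\nabla\cN$. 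So the step ``the crucial input is a Sabbah-type comparison\ldots'' is an assumption of something essentially as hard as the theorem itself.

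The paper's proof avoids this entirely by working one $\al\in\bC^r$ at a time rather than with the universal parameter $\bs$. After reducing via specialization (Proposition~\ref{propSuppSp}, Lemma~\ref{lemRedSp}, Proposition~\ref{propBres}) to the case where the $f_j$ vanishing at $x$ are distinct reduced irreducible germs, Theorem~\ref{thm2} identifies $\Supp_x(\psi_F\bC_X)$ with the cohomology support locus $\cV(U_{F,x})$. One then argues contrapositively: if $\Exp(\al)\notin\Exp(V(B_{F,x}))$, then $\al+m\cdot\bone\notin V(B_{F,x})$ for every integer $m$, so the $\cD_X$-modules $\cP_{\al+m\cdot\bone}=\cD_X\prod_j f_j^{\al_j+m}$ are all equal. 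By the explicit Riemann--Hilbert correspondence for these specific modules (Theorem~\ref{thmDR}, due to Bj\"ork and Ginzburg), this forces $Rj_*\cL_\al=IC(\cL_\al)$ locally, hence $R^kj_*\cL_\al=0$ for $k>0$ and $\cL_\al\notin\cV(U_{F,x})$. The point is that the Riemann--Hilbert input needed here is only for the holonomic $\cD_X$-modules $\cM_\al$ and $\cP_\al$ at a \emph{fixed} $\al$, which is classical, rather than for the non-holonomic $\cD_X[\bs]$-module $M_F$. Your bookkeeping steps (the uniform-support adjustment and $B_{F,x}\subset B_{F,y}$) are correct and are used in the paper as well.
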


 We also make a significant step toward proving the converse of Theorem \ref{thmConjIn}. Let  $\cD_X$ be the sheaf of holomorphic differential operators on $X$. 
  
\begin{Prop}\label{PropCondInd} The converse of Theorem \ref{thmConjIn}, and so the Conjecture \ref{conj2}, holds for any $F$ if the following holds for any $F$ such that the $f_j$ with $f_j(x)=0$  define mutually distinct reduced and irreducible hypersurface germs at $x$: locally at $x$, for all $\al\in V(B_{F,x})$,
$$
\sum_{j=1}^r(s_j-\al_j) \cD_X[s_1,\ldots,s_r]f_1^{s_1}\ldots f_r^{s_r} \not\equiv \cD_X[s_1,\ldots,s_r]f_1^{s_1}\ldots f_r^{s_r}$$
modulo $\cD_X[s_1,\ldots,s_r]f_1^{s_1+1}\ldots f_r^{s_r+1}$.
\end{Prop}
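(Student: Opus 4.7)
The plan is to convert the algebraic hypothesis on the $\cD$-module $M:=\cD_X[s_1,\ldots,s_r]f_1^{s_1}\cdots f_r^{s_r}/\cD_X[s_1,\ldots,s_r]f_1^{s_1+1}\cdots f_r^{s_r+1}$ into a topological statement about $\psi_F\bC_X$, via a $\cD$-module incarnation of the Sabbah specialization functor.

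First I would reduce the desired converse inclusion for an arbitrary $F$ to the case on which the hypothesis is phrased. Writing each $f_j$ with $f_j(x)=0$ as a monomial $\prod_i h_i^{n_{ji}}$ in pairwise distinct reduced irreducible germs $h_i$, the identity $f^s=\prod_i h_i^{\sum_j n_{ji}s_j}$ induces a linear map $\bC^r\to\bC^{\#H}$ that relates $V(B_{F,x})$ to $V(B_{H,x})$ and, on the topological side, realizes $\psi_F\bC_X$ as a pullback of $\psi_H\bC_X$ under the dual map of tori. This reduces the problem to the case when the $f_j$ with $f_j(x)=0$ are pairwise distinct, reduced, and irreducible, where the hypothesis of the Proposition is available.

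In that case, fix $\alpha\in V(B_{F,x})$. The hypothesis gives $M_x\ne\sum_j(s_j-\alpha_j)M_x$, and by coherence of $M$ as a $\cD_X[s_1,\ldots,s_r]$-module the sheaf quotient $M/\sum_j(s_j-\alpha_j)M$ is nonzero in some neighborhood of $x$. I would then pick a point $y$ arbitrarily close to $x$, say in the generic stratum of a Whitney stratification of $D$ near $x$, at which this quotient does not vanish. The central step is to identify this $\cD_X$-module with a nonzero summand of the $\exp(2\pi i\alpha)$-generalized eigenspace of the $A$-action on the stalk $(\psi_F\bC_X)_y$, up to a cohomological shift. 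I would effect this identification via Sabbah's construction of $\psi_F$ as an iterated nearby-cycles complex combined with a multi-variable version of the Kashiwara--Malgrange theorem: the specialization of $\cD_X[s]f^s$ along the maximal ideal $(s_1-\alpha_1,\ldots,s_r-\alpha_r)$ should compute, through a Riemann--Hilbert correspondence, precisely the character-$\exp(2\pi i\alpha)$ component of $\psi_F\bC_X$. Given this, non-vanishing of $M_y/\sum_j(s_j-\alpha_j)M_y$ forces $\exp(2\pi i\alpha)\in\Supp_y(\psi_F\bC_X)$. Uniformity is then automatic: if $f_j(y)\ne 0$ then $f_j$ is a unit in $\cO_{X,y}$, so $s_j$ acts freely on $M_y$ and the $t_j$-coordinate of the support is unconstrained, which is exactly the condition defining $\Supp^{unif}_y$.

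The main obstacle is precisely this $\cD$-module interpretation of Sabbah's functor: proving that $M/\sum_j(s_j-\alpha_j)M$ governs the $\exp(2\pi i\alpha)$-component of $\psi_F\bC_X$ uniformly in $\alpha$. For $r=1$ this is the content of the classical Kashiwara--Malgrange theorem via the $V$-filtration, but for $r>1$ it requires setting up a multi-variable $V$-filtration on $\cD_X[s]f^s$ and verifying its compatibility with the iterated nearby cycles used to define $\psi_F$. The reduced-irreducible hypothesis enters crucially here, guaranteeing that the resulting $\cD_X$-module is supported along $D$ in the expected way rather than collapsing along an embedded or non-reduced component; this is what makes the eigenspace decomposition at $\lambda=\exp(2\pi i\alpha)$ correspond cleanly to the algebraic specialization at $\alpha$.
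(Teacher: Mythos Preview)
Your reduction step to the case of mutually distinct reduced irreducible germs is correct and matches the paper's use of specialization (Proposition~\ref{propSuppSp}, Lemma~\ref{lemRedSp}, Proposition~\ref{propBres}). The gap is in your ``central step'': you propose to identify $M/\sum_j(s_j-\alpha_j)M$ directly with the $\Exp(\alpha)$-component of $\psi_F\bC_X$ via a multi-variable $V$-filtration and a multi-variable Kashiwara--Malgrange theorem. You yourself flag this as the main obstacle, and rightly so: no such multi-variable $V$-filtration comparison is available in the paper (nor in the literature in the form you need), and establishing it would be substantially harder than the Proposition itself. So as written, the proposal assumes essentially what it is trying to prove.

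The paper's argument sidesteps this entirely and is much more elementary. After the same reduction, it uses Theorem~\ref{thm2} to replace $\Supp^{unif}_y(\psi_F\bC_X)$ by the cohomology support locus $\cV^{unif}(U_{F,y})$, so that the question becomes one about a \emph{single} rank-one local system $\cL_\alpha$ rather than the whole $A$-module $\psi_F\bC_X$. It then argues by contradiction: if $\Exp(\alpha)\notin\cV^{unif}(U_{F,y})$ for all nearby $y$, then $Rj_*\cL_\alpha=IC(\cL_\alpha)$ near $x$, and the explicit Riemann--Hilbert correspondence (Theorem~\ref{thmDR}) gives $\cM_\alpha=\cP_\alpha=\cP_{\alpha+m\cdot\bone}$ for all integers $m$. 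On the other hand, after shifting $\alpha$ so that $\alpha\in V(B_{F,x})$ but $\alpha-m\cdot\bone\notin V(B_{F,x})$ for $m>0$, one has $\cN_\alpha\cong\cP_\alpha$ locally at $x$; the hypothesis of the Proposition says exactly that the map $\rho_\alpha:\cN_{\alpha+\bone}\to\cN_\alpha$ is not surjective at $x$, forcing $\cP_{\alpha+\bone}\subsetneq\cP_\alpha$, a contradiction. The point is that only the classical, single-local-system Riemann--Hilbert correspondence is needed, never a multi-variable $V$-filtration.
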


The relation of Bernstein-Sato ideals with local systems is achieved by relating the latter with the Sabbah specialization complex. For a connected finite CW-complex $M$, let $L(M)$ denote the space of complex local systems of rank one on $M$. Then
$$
L(M)=\Hom (H_1(M,\bZ),\bC^*).
$$
Define the  {\it cohomology support locus} (also called the {\it characteristic variety}) of $M$ to be the subset $\cV(M)$ of $L(M)$ consisting of local systems with non-trivial cohomology,
\begin{equation}\label{eqcv}
\cV(M):=\{ \cL \in L(M)\mid H^k(M,\cL)\ne 0 \text{ for some }k \}.
\end{equation}
There are more refined cohomology jump loci of $M$ which can be defined, but we will not be concerned with them in this article. It is known that $\cV(M)$ is a Zariski closed subset of $L(M)$ defined over $\bQ$.

For a point $x$ in $X$, let $U_{F,x}$ be the complement of $D$ in a small open ball centered at $x$,
$$
U_{F,x}:={\rm{Ball}}_x\smallsetminus ({\rm{Ball}}_x\cap D).
$$
There is a natural embedding of $L(U_{F,x})$ into the torus $(\bC^*)^r$ induced by $F$.

\begin{Thm}\label{thm2} If the polynomials $f_j$ with $f_j(x)= 0$ define mutually distinct reduced and irreducible hypersurface germs at $x$, then
$$\Supp _x(\psi_F\bC_X) = \cV(U_{F,x}).$$
\end{Thm}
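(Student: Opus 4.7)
My plan is to identify the stalk of Sabbah's specialization at $x$ with a local system cohomology complex on $U_{F,x}$ and then invoke perfect-complex base change. Concretely, I aim to show as an $A$-module
\[
(\psi_F\bC_X)_x \;\simeq\; R\Gamma(U_{F,x},\cL_A),
\]
where $\cL_A$ is the rank one $A$-local system on $U_{F,x}$ obtained by pulling back along $F$ the $A$-local system $\Exp_*\bC_{\bC^r}$ on $(\bC^*)^r$ coming from the universal cover $\Exp\colon\bC^r\to(\bC^*)^r$. This identification is essentially a restatement of Sabbah's definition: up to conventions $\psi_F\bC_X=i^{-1}R(j\circ p)_*\bC_{\widetilde{X^*}}$, where $\widetilde{X^*}:=X^*\times_{(\bC^*)^r}\bC^r$ and the $A$-module structure comes from the $\bZ^r$-action by deck transformations; restricting to a small ball around $x$ and using Leray collapses the stalk to the cohomology of the $A$-local system on $U_{F,x}$.

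Under the stated hypothesis, a standard local-complement computation gives $\pi_1(U_{F,x})\cong \bZ^s$ freely generated by the meridians around those $V(f_j)$ through $x$, and a winding-number count shows that $F_*\colon\pi_1(U_{F,x})\to\bZ^r$ is the inclusion of the coordinate subgroup indexed by $\{j : f_j(x)=0\}$. Consequently every rank one $\bC$-local system on $U_{F,x}$ has the form $F^*\cL_\lambda$ for some character $\lambda\in(\bC^*)^r$, and dually the map $\lambda\mapsto F^*\cL_\lambda$ realizes $L(U_{F,x})$ as a quotient torus of $(\bC^*)^r$. Furthermore, $U_{F,x}$ has the homotopy type of a finite CW complex and $\cL_A$ is a free rank one $A$-local system, so $R\Gamma(U_{F,x},\cL_A)$ is computed by a bounded complex of finitely generated free $A$-modules, i.e.\ it is a perfect complex over $A$.

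For a closed point $\lambda\in(\bC^*)^r=\Spec A$, perfect-complex base change then supplies
\[
R\Gamma(U_{F,x},\cL_A)\otimes^L_A\bC_\lambda \;\simeq\; R\Gamma(U_{F,x},F^*\cL_\lambda),
\]
so $\lambda\in\Supp_x(\psi_F\bC_X)$ if and only if $F^*\cL_\lambda\in\cV(U_{F,x})$, which under the natural identification of $\cV(U_{F,x})$ with a subset of $(\bC^*)^r$ via $F$ gives the stated equality. The main obstacle will be the first step: matching the $A$-module structure on the stalk of $\psi_F\bC_X$ (built from the $\bZ^r$-action on $p_*\bC_{\widetilde{X^*}}$, which naively has infinite rank $\bC$-stalks) with the rank one free $A$-local system $\cL_A$, and then exhibiting the resulting cohomology as a perfect complex of $A$-modules so that the base-change step is rigorous; the hypothesis on distinct reduced irreducible germs enters precisely at this stage to rule out non-free contributions to $\pi_1^{ab}(U_{F,x})$ and to guarantee that $F_*$ has the clean coordinate-inclusion form used above.
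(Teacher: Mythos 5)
Your strategy is genuinely different from the paper's and is, modulo two bookkeeping points, a valid and arguably cleaner route. The paper first proves a duality statement (Proposition 3.4 / \ref{propStalk}): using Verdier duality it identifies $H^k(i_x^*\psi_F\bC_X)$ with the homology $H_k(\widetilde{U_{F,x}},\bC)$ of the $\bZ^r$-cover, up to $t_j\leftrightarrow t_j^{-1}$; it then invokes Lemma \ref{lemFact} to recognize that cover as the universal abelian cover, and finishes by citing Papadima--Suciu (Theorem \ref{thmChTp}, from \cite[Thm.\ 3.6]{PS}) to convert the Alexander support $\widetilde{\cV}$ into the cohomology support locus $\cV$, with a second inversion that cancels the first. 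You instead stop at the stalk identity $(\psi_F\bC_X)_x\simeq R\Gamma(U_{F,x},\cL^F)$ (the paper's equation (\ref{eqStalk})), observe this is a perfect complex of $A$-modules, and apply derived base change $R\Gamma(U_{F,x},\cL^F)\otimes^L_A\bC_\lambda\simeq R\Gamma(U_{F,x},\cL^F\otimes_A\bC_\lambda)$ to read off the support. This avoids Verdier duality and the external reference \cite{PS} entirely; the trade-off is that the paper's route also delivers the Alexander-module interpretation (Corollary \ref{corFact}), which the paper wants elsewhere.

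Two things to fix. First, your ``restatement of Sabbah's definition'' uses $R(j\circ p)_*$, whereas the paper's Definition \ref{defSab} uses $Rj_*Rp_!$. The point you flag as ``the main obstacle''---that $p_*\bC_{\widetilde{U}}$ has infinite-rank $\bC$-stalks---is not a genuine obstacle but a symptom of using the wrong functor: with $p_!$ one gets exactly the rank one free $A$-local system $\cL^F$ (this is Lemma \ref{lemSimpl}(b)), and the hypothesis on reduced irreducible germs plays no role there. That hypothesis is only needed in the second part of your argument, to identify $L(U_{F,x})$ with the subtorus $\bigcap_{f_j(x)\neq 0}V(t_j-1)$ of $(\bC^*)^r$ and to see that $F_*$ on $\pi_1$ is the coordinate-subgroup inclusion (Example \ref{exaB} and Lemma \ref{lemFact}). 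Second, you silently assume that $\cL^F\otimes_A\bC_\lambda$ is the pullback of the character $\lambda$ rather than of $\lambda^{-1}$; depending on orientation conventions for deck transformations this may introduce a global inversion on the torus. The paper is careful about this (two inversions appear and cancel), and since $\cV(U_{F,x})$ is not a priori invariant under $\lambda\mapsto\lambda^{-1}$, you should pin down the convention or note explicitly that both sides of the desired equality are affected the same way.
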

\noindent There is absolutely no difficulty to understand the relation between $\Supp _x(\psi_F\bC_X)$ and cohomology support loci if the assumptions are dropped, cf. \ref{subsPfTGL}.

We can define the {\it uniform cohomology support locus} with respect to $F$ at $x$, which we denote by $\cV^{unif}(U_{F,x})$, such that it agrees with $\Supp_x^{unif}(\psi_F\bC_X)$ via Theorem \ref{thm2}, see Definition \ref{dfUC}. Hence:

\begin{Thm}\label{thm3} If the polynomials $f_j$ with $f_j(x)= 0$ define mutually distinct reduced and irreducible hypersurface germs at $x$, then
$$\Exp( V({B}_{F,x}))\supset \bigcup_{y\in D\text { near }x}\cV^{unif}(U_{F,y}).$$ Assuming Conjecture \ref{conj2}, equality holds.
\end{Thm}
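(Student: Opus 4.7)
Theorem \ref{thm3} is a direct combination of the previous results once the definition of $\cV^{unif}$ is unpacked. The strategy is: identify each $\cV^{unif}(U_{F,y})$ with $\Supp_y^{unif}(\psi_F \bC_X)$ via Theorem \ref{thm2}, then apply Theorem \ref{thmConjIn} for the inclusion and Conjecture \ref{conj2} for the equality.

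First I would check that the distinctness hypothesis at $x$ propagates to all $y \in D$ sufficiently close to $x$, after restricting $y$ to the generic locus of a fine enough stratification of $D$ (which exists by the constructibility statement of \cite{BMM}; as remarked after Conjecture \ref{conj2} these generic points suffice for the union). For such $y$, the $f_j$ with $f_j(y)=0$ form a subset of those vanishing at $x$; reducedness of each such germ at $y$ is inherited from reducedness at $x$ (an open condition); and on a suitable stratification the branches at $x$ descend to distinct locally irreducible germs at $y$. Hence Theorem \ref{thm2} applies at $y$ and yields
$$
\Supp_y(\psi_F \bC_X) = \cV(U_{F,y}).
$$
Invoking Definition \ref{dfUC}, according to which $\cV^{unif}(U_{F,y})$ is the uniform version of $\cV(U_{F,y})$ constructed precisely so as to match $\Supp_y^{unif}(\psi_F\bC_X)$ through Theorem \ref{thm2}, this identification lifts to
$$
\Supp_y^{unif}(\psi_F \bC_X) = \cV^{unif}(U_{F,y}).
$$
Taking the union over the relevant $y$ and combining with Theorem \ref{thmConjIn} gives
$$
\Exp(V(B_{F,x})) \supset \bigcup_{y\in D \text{ near } x} \cV^{unif}(U_{F,y}),
$$
which is the first assertion. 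Under Conjecture \ref{conj2} the inclusion in Theorem \ref{thmConjIn} becomes an equality at the level of $\Supp^{unif}$, and the identification above transports this equality to the level of $\cV^{unif}$, yielding the second assertion.

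The main obstacle is the propagation step: analytic irreducibility of an algebraic polynomial germ is not an open property in general, so an $f_j$ which is irreducible at $x$ may split into several branches at a nearby $y$. This is why one restricts to generic points of a stratification of $D$, where the branches are controlled. At bad points, one may locally factor $F$ into its analytic branches at $y$; such a refinement leaves $V(B_{F,x})$ unchanged but refines both sides of the identification in a compatible way, so the argument still goes through. Apart from this bookkeeping, the proof is a formal concatenation of Theorem \ref{thmConjIn}, Theorem \ref{thm2}, and Definition \ref{dfUC}.
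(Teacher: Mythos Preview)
Your approach matches the paper's exactly: the proof there is one sentence, citing Theorem~\ref{thm2} and Theorem~\ref{thmConjIn}, and your unpacking via Definition~\ref{dfUC} is precisely the intended reading.

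One correction to your handling of the propagation subtlety: restricting to generic points of a stratification does \emph{not} ensure irreducibility at $y$. For instance, the Whitney umbrella $x^2 - y^2 z$ is analytically irreducible at the origin but reducible at every nonzero point of its singular locus (the $z$-axis), which is itself a stratum. You correctly flag this failure in your ``main obstacle'' paragraph, but your fallback claim that local factoring at $y$ ``leaves $V(B_{F,x})$ unchanged'' is not right either: factoring changes the number of variables $s_j$, hence the ambient polynomial ring, so the two zero loci cannot literally coincide. The honest fix is the specialization machinery already invoked inside the proof of Theorem~\ref{thmConjIn}: apply Lemma~\ref{lemRedSp} together with Propositions~\ref{propSuppSp} and~\ref{propBres} at each $y$ separately, rather than once at $x$. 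The paper does not spell this out in the proof of Theorem~\ref{thm3} either; it simply asserts the identification and elsewhere remarks that dropping the hypothesis poses ``no difficulty.''
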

\noindent Again, there is absolutely no difficulty to understand what happens if the assumptions are dropped.

Let us mention the connection with local Alexander modules. The cohomologies of the stalks of $\psi_{F}\bC_X$ are the multi-variable local homology Alexander modules, as shown by Sabbah \cite{Sab}, see Proposition \ref{propStalk}. In the special case when all the polynomials $f_j$ are homogeneous, the cohomologies of the stalk at the origin of $\psi_{F}\bC_X$ are the multi-variable universal homology Alexander modules introduced by Dimca-Maxim \cite{DM}, see Proposition \ref{propDM}.

\subsection{The geometry of Bernstein-Sato ideals.}\label{subsGBS} Next, information about uniform supports and cohomology support loci leads to  better understanding of the question of what do zero loci of Bernstein-Sato ideals look like. In the case when all $f_j$ are homogeneous polynomials, we give a formula which reduces the computation of uniforms supports to a lower-dimensional, but possibly non-homogeneous case,  see Proposition \ref{propFR}. Hence, conjecturally, the same holds for $\Exp(V(B_F))$. In particular, we obtain:

\begin{Cor}\label{cor1} Let $F=(f_1,\ldots , f_r)$ with $0\ne f_j\in\bC[x_1,\ldots,x_n]$  irreducible and homogeneous of degree $d_j$ defining mutually distinct hypersurfaces with $gcd(d_1,\ldots,d_r)=1$. Let $V$ be the complement in $\bP^{n-1}$ of the union of the zero loci of $f_j$. If $\chi(V)\ne 0$, then
$$
\{d_1s_1+\ldots +d_rs_r+k=0\}\subset V(B_F)
$$
for some $k\in \bZ$.
\end{Cor}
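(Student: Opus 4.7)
The plan is to apply Theorem \ref{thm3} at $x=0$, compute $\cV^{unif}(U_{F,0})$ explicitly via the $\bC^*$-fibration of $\bC^n\smallsetminus D$ over $V$ provided by homogeneity, and then convert the resulting containment of a codimension-one subtorus of $(\bC^*)^r$ in $\Exp(V(B_{F,0}))$ into the containment of a single affine hyperplane in $V(B_{F,0})$ by a dimension/measure argument exploiting algebraicity. Since $V(B_F)\supset V(B_{F,0})$, this will finish the proof.

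The hypotheses of Theorem \ref{thm3} hold at $x=0$: each $d_j\geq 1$ (because $f_j$ is irreducible as a polynomial) so $f_j(0)=0$, and any factorization of a homogeneous $f_j$ in $\cO_{\bC^n,0}$ descends to a factorization by leading homogeneous parts, so every $f_j$ defines a reduced irreducible germ at $0$, with germs pairwise distinct. Therefore
$$
\Exp(V(B_{F,0}))\;\supset\;\cV^{unif}(U_{F,0})\;=\;\cV(U_{F,0}),
$$
the last equality holding because every $f_j$ vanishes at $0$. Since $D$ is a cone, $U_{F,0}$ deformation retracts onto $\bC^n\smallsetminus D$, which sits as the $\bC^*$-bundle $\pi:\bC^n\smallsetminus D\to V$ obtained by restricting $\bC^n\smallsetminus\{0\}\to\bP^{n-1}$. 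With $H_1(\bC^n\smallsetminus D,\bZ)=\bigoplus_{j=1}^r\bZ\gamma_j$ freely generated by meridians (so $L(U_{F,0})=(\bC^*)^r$), the class of the $\bC^*$-fiber is $\sum_j d_j\gamma_j$ because $f_j(tx)=t^{d_j}f_j(x)$ makes its linking number with $V(f_j)$ equal to $d_j$. Hence a rank-one local system with monodromies $(\lambda_1,\ldots,\lambda_r)$ descends along $\pi$ iff it lies in the subtorus
$$
T':=\{\lambda_1^{d_1}\cdots\lambda_r^{d_r}=1\}\;\subset\;(\bC^*)^r,
$$
which is connected of codimension one because $\gcd(d_1,\ldots,d_r)=1$.

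For every rank-one local system $\cM\in L(V)$, the identity $\chi(V,\cM)=\chi(V)\neq 0$ (valid for rank-one coefficient systems on any finite CW complex) forces $H^*(V,\cM)\neq 0$, so $\cV(V)=L(V)$. Taking $j$ minimal with $H^j(V,\cM)\neq 0$, the Gysin sequence of $\pi$ collapses near degree $j$ to an injection $H^j(V,\cM)\hookrightarrow H^j(\bC^n\smallsetminus D,\pi^*\cM)$, so $\pi^*\cM\in\cV(U_{F,0})$. Consequently
$$
T'\;\subset\;\cV(U_{F,0})\;\subset\;\Exp(V(B_{F,0})).
$$

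Finally, $\Exp^{-1}(T')=\bigsqcup_{k\in\bZ}H_k$ with $H_k:=\{d_1s_1+\ldots+d_r s_r+k=0\}$; these are the irreducible components, again thanks to $\gcd(d_j)=1$. The previous inclusion rewrites as $\bigcup_{k\in\bZ}\Exp(V(B_{F,0})\cap H_k)\supset T'$. If no $H_k$ were contained in $V(B_{F,0})$, each intersection $V(B_{F,0})\cap H_k$ would be a proper algebraic subset of $H_k$ of complex dimension at most $r-2$, so its image in the $(r-1)$-dimensional complex manifold $T'$ would have real dimension $<2(r-1)$ and hence Lebesgue measure zero; a countable union of measure-zero sets cannot cover the positive-measure $T'$, a contradiction. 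Therefore $V(B_{F,0})\supset H_k$ for some $k\in\bZ$, and the corollary follows. The main obstacle is precisely this last step, where algebraicity of $V(B_{F,0})$ must be leveraged to extract a single hyperplane rather than a countable family of lower-dimensional slices that might conceivably set-theoretically cover the subtorus $T'$.
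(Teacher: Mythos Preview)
Your proof is correct and follows the same overall architecture as the paper's: both reduce to showing $V(t_1^{d_1}\cdots t_r^{d_r}-1)\subset\Exp(V(B_{F,0}))$ via Theorem~\ref{thmConjIn} (you invoke it through Theorem~\ref{thm3}, which is Theorem~\ref{thmConjIn} combined with Theorem~\ref{thm2}), and then extract a single affine hyperplane $H_k\subset V(B_{F,0})$.

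There is one genuine difference worth noting. To obtain the inclusion $T'=\{t_1^{d_1}\cdots t_r^{d_r}=1\}\subset\cV(U_{F,0})$, the paper appeals to Proposition~\ref{propSuppHom}, whose proof passes through Sabbah's multivariable A'Campo formula (Theorem~\ref{thmSab}, Corollary~\ref{corZH}) and hence a log resolution. Your argument instead uses the $\bC^*$-fibration $\pi:\bC^n\smallsetminus D\to V$ directly: the identity $\chi(V,\cM)=\chi(V)\ne 0$ forces $\cV(V)=L(V)$, and the Gysin sequence for $\pi$ with coefficients in $\pi^*\cM$ gives injectivity $H^j(V,\cM)\hookrightarrow H^j(\bC^n\smallsetminus D,\pi^*\cM)$ in the lowest nonvanishing degree. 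This is more elementary---no resolution, no monodromy zeta function---and yields the same inclusion. You also spell out the final dimension/measure step (a countable union of $\le(r-2)$-dimensional images cannot cover the $(r-1)$-dimensional torus $T'$), which the paper leaves implicit.
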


It is tempting to conjecture that $k=n$ in Corollary \ref{cor1}. We do so below for hyperplane arrangements.

In the case of hyperplane arrangements, the homogenous reduction formula can be applied repeatedly to obtain precise combinatorial formulas for the uniform supports of the Sabbah specialization complex, see Proposition \ref{propBFHA}.  Let $F=(f_1,\ldots, f_r)$ be such that $f_j$ are non-zero linear forms defining mutually distinct hyperplanes. The following terminology is defined in Section \ref{subsHA}. For an edge $W$ of the hyperplane arrangement $\prod_{j=1}^rf_j$, let 
$
F_W
$
be the restriction in the sense of hyperplane arrangements of $F$ to $W$. Let 
$$
F_W=\prod_{i=1}^{l_W}F_W^{(i)}
$$
be a total splitting of $F_W$. If we set $F_W^{(i)}=(f_{1,W}^{(i)},\ldots ,f_{r,W}^{(i)})$, let
$$
d_{j,W}^{(i)}:=\deg f_{j,W}^{(i)}.
$$

\begin{Cor}\label{corBFHA}  Let $F=(f_1,\ldots, f_r)$ with $f_j\in\bC[x_1,\ldots,x_n]$ non-zero linear forms defining mutually distinct hyperplanes. Then 
\begin{equation*}
 \bigcup_{W} V(\id{t_1^{d_{1,W}^{(i)}}\ldots t_r^{d_{r,W}^{(i)}}-1\mid i=1, \ldots ,l_W})\quad \subset \quad \Exp(V(B_F)),
\end{equation*}
where the union is over the edges $W$ of the hyperplane arrangement $\prod_{j=1}^rf_j$. Assuming Conjecture \ref{conj2}, equality holds.
\end{Cor}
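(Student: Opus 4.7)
The plan is to combine Proposition \ref{propBFHA}, which provides an explicit combinatorial formula for the uniform supports $\Supp^{unif}_y(\psi_F \bC_X)$ at arrangement points, with the containment of Theorem \ref{thmConjIn}, which relates such uniform supports to $\Exp(V(B_{F,x}))$. Using $V(B_F) = \bigcup_{x \in X} V(B_{F,x})$ and the fact that $\Exp$ commutes with unions, it suffices to prove the local statement at each $x \in D$ and then globalize.

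Fix $x \in D$. For each edge $W$ of the arrangement passing through $x$, choose a generic point $y_W \in W$; such a point lies in every small ball around $x$ and is therefore \emph{near} $x$ in the sense of Theorem \ref{thmConjIn}. Proposition \ref{propBFHA}, obtained by iterating the homogeneous reduction formula of Proposition \ref{propFR}, identifies
$$\Supp^{unif}_{y_W}(\psi_F \bC_X) = V(\langle t_1^{d_{1,W}^{(i)}} \cdots t_r^{d_{r,W}^{(i)}} - 1 \mid i = 1, \ldots, l_W\rangle),$$
with the total splitting $F_W = \prod F_W^{(i)}$ encoding the indecomposable factors of the restriction of $F$ to $W$. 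Unioning over the edges $W \ni x$ and invoking Theorem \ref{thmConjIn} then yields
$$\bigcup_{W \ni x} V(\langle t_1^{d_{1,W}^{(i)}} \cdots t_r^{d_{r,W}^{(i)}} - 1 \mid i = 1, \ldots, l_W\rangle) \subset \Exp(V(B_{F,x})),$$
which is the desired local inclusion. Taking the union over $x \in D$ delivers the global inclusion in the corollary.

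For the equality, assume Conjecture \ref{conj2}; it furnishes the reverse inclusion $\Exp(V(B_{F,x})) \subset \bigcup_{y \in D \text{ near } x} \Supp^{unif}_y(\psi_F \bC_X)$. Since Proposition \ref{propBFHA} computes the uniform support at any $y \in D$ in terms of the unique smallest edge $W_y$ containing $y$, and the resulting formula depends only on $W_y$, the right-hand side collapses to the union over generic points of the edges $W$ through $x$, matching the left-hand side. The main obstacle in this chain of reasoning is not Corollary \ref{corBFHA} itself, which is essentially bookkeeping, but rather Proposition \ref{propBFHA}: establishing its combinatorial formula requires a careful recursive analysis combining the reduction of homogeneous arrangements to lower-dimensional (possibly non-homogeneous) settings with the stratification of the arrangement by edges.
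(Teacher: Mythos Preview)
Your proof is correct and follows essentially the same route as the paper: combine Proposition \ref{propBFHA}(a), which identifies $\bigcup_{x\in D}\Supp^{unif}_x(\psi_F\bC_X)$ with the combinatorial union over edges, with Theorem \ref{thmConjIn} to obtain the inclusion, and invoke Conjecture \ref{conj2} (equivalently Proposition \ref{propBFHA}(b)) for the equality. One minor inaccuracy worth noting: Proposition \ref{propBFHA} is not proved by iterating Proposition \ref{propFR}, but rather via the multiplicative Thom--Sebastiani property (Proposition \ref{propMTS}) together with Lemma \ref{lemAllSupp}; this does not affect the validity of your argument for the corollary.
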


\noindent {\bf{Remark.}}
This corollary provides support for Conjecture \ref{conj2} in the sense that the conjectured equality in Corollary \ref{corBFHA} can be checked for many particular examples, see Section \ref{secEx}. Note that the left-hand side is completely combinatorial. The conditions  in the Corollary can be relaxed, see Remark \ref{remRel}, however we opted to keep only an esthetically cleaner statement.
\medskip

By specializing $F=(f_1,\ldots,f_r)$ to $\prod_{j=1}^rf_r$ in the above Corollary we obtain the following. Let $f$ be a hyperplane arrangement, $f_W$ the restriction to the edge $W$, and $f_W=\prod_{i=1}^{l_W}f_W^{(i)}$ a total splitting of $f_W$. Let $d_W^{(i)}=\deg f_W^{(i)}$. Denote by $b_f$ the classical one-variable Bernstein-Sato polynomial of $f$, and by $M_{f,x}$ the Milnor fiber of $f$ at $x$. With this notation:

\begin{Cor}\label{corCO} Let $f\in\bC[x_1,\ldots,x_n]$ be a hyperplane arrangement. Then $\Exp(V(b_f))$, which equals the set of all eigenvalues of the monodromy on $H^\bullet(M_{f,x},\bC)$ for $x$ ranging over $f^{-1}(0)$, is a combinatorial invariant. If $f$ is reduced, this is the set
$$
\bigcup_WV(\id{t^{d_W^{(i)}}-1 \mid i=1,\ldots, l_W}),
$$
where the union is over the edges $W$ of $f$.
\end{Cor}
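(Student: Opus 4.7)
The first assertion---that $\Exp(V(b_f))$ coincides with the union over $x \in f^{-1}(0)$ of the monodromy eigenvalues on $H^\bullet(M_{f,x},\bC)$---is the classical Kashiwara-Malgrange theorem, i.e., the $r=1$ case of Conjecture \ref{conj2}, which is known.

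For the explicit combinatorial formula when $f$ is reduced, my plan is to specialize Corollary \ref{corBFHA} along the diagonal. Write $f = f_1 \cdots f_r$ as a product of distinct linear forms and set $F = (f_1, \ldots, f_r)$. The identity $f^s = f_1^s \cdots f_r^s$ makes any Bernstein-Sato functional equation for $f$ at parameter $s$ into a Bernstein-Sato functional equation for $F$ along the diagonal $s_1 = \cdots = s_r = s$, and conversely; equivalently, Sabbah's specialization functor is compatible with the product $F \mapsto f = \prod_j f_j$, so that $\Supp^{unif}_y(\psi_f\bC_X)$ equals the diagonal slice of $\Supp^{unif}_y(\psi_F\bC_X)$ for every $y \in f^{-1}(0)$. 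Together with Kashiwara-Malgrange, this produces the set-theoretic equality
$$
\Exp\bigl(V(b_f)\bigr) \;=\; \Delta^{\ast} \cap \Exp\bigl(V(B_F)\bigr),
$$
where $\Delta^{\ast} \subset (\bC^{\ast})^r$ is the diagonal subtorus $\{(t,\ldots,t)\}$, identified with $\bC^{\ast}$. Plugging in Corollary \ref{corBFHA} and setting $t_1 = \cdots = t_r = t$, each monomial $t_1^{d_{1,W}^{(i)}} \cdots t_r^{d_{r,W}^{(i)}}$ collapses to $t^{d_W^{(i)}}$ with $d_W^{(i)} := \sum_j d_{j,W}^{(i)} = \deg f_W^{(i)}$, yielding the claimed union. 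The combinatorial invariance of $\Exp(V(b_f))$ is then immediate from the right-hand side, since the edges $W$ and the total splitting factors $f_W^{(i)}$ of a reduced hyperplane arrangement depend only on the intersection lattice.

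The principal obstacle is that Corollary \ref{corBFHA} provides unconditional equality only under the still-open Conjecture \ref{conj2}; unconditionally, only the inclusion $\supset$ is available for $\Exp(V(B_F))$. This is circumvented by the fact that, once restricted to the diagonal subtorus, the problem is one-variable: the $r=1$ instance of Conjecture \ref{conj2} is the Kashiwara-Malgrange theorem. Concretely, combining the combinatorial description of $\Supp^{unif}_y(\psi_F\bC_X)$ for hyperplane arrangements supplied by Proposition \ref{propBFHA} with Malgrange-Kashiwara and the diagonal-compatibility of Sabbah specialization yields both inclusions of the formula unconditionally, bypassing the multi-variable conjecture.
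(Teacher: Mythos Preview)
Your approach is essentially the same as the paper's: specialize $F=(f_1,\ldots,f_r)$ to $f=\prod_j f_j$ along the diagonal, use Proposition \ref{propSuppSp} (specialization of supports) to identify $\bigcup_x\Supp_x^{unif}(\psi_f\bC_X)$ with the diagonal slice of $\bigcup_x\Supp_x^{unif}(\psi_F\bC_X)$, invoke the unconditional combinatorial formula of Proposition \ref{propBFHA} for the latter, and close with Malgrange--Kashiwara. Your final paragraph states this correctly.

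One point of presentation to clean up: the displayed equality
\[
\Exp\bigl(V(b_f)\bigr)=\Delta^{*}\cap\Exp\bigl(V(B_F)\bigr)
\]
is \emph{not} what your argument actually proves, and is not known unconditionally (it would require Conjecture \ref{conj2} for $F$). What your diagonal-compatibility plus Malgrange--Kashiwara yields is
\[
\Exp\bigl(V(b_f)\bigr)=\Delta^{*}\cap\bigcup_{y\in D}\Supp_y^{unif}(\psi_F\bC_X),
\]
and it is into \emph{this} that you should plug Proposition \ref{propBFHA}, not Corollary \ref{corBFHA}. You say as much in your last paragraph, so the logic is fine; just remove or replace the misleading display so that the written argument does not pass through the conditional statement at all. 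The paper's proof does exactly this: it never writes $\Exp(V(B_F))$, working throughout with the support union and Proposition \ref{propBFHA}.
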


In contrast, U. Walther has announced  that the Bernstein-Sato polynomial $b_f$ of a hyperplane arrangement is not a combinatorial invariant.  A different proof of Corollary \ref{corCO} involving \cite[Theorem 3.1]{L-e} was noticed and communicated to us by A. Libgober.

The following is a multi-variable generalization of  \cite[Conjecture 1.2]{BMT}. This  statement has implications for the Multi-Variable Strong Monodromy Conjecture, see Theorem \ref{thmSM} below.

\begin{Conj}\label{conjND}  Let $F=(f_1,\ldots,f_r)$, where $f_j$ are central hyperplane arrangements in $\bC^n$, not necessarily reduced, of degree $d_j$, and $\prod_{j=1}^rf_j$ is a central essential indecomposable hyperplane arrangement. Then  
$$\{
d_1s_1+\ldots +d_rs_r + n=0
\}\subset V(B_F).$$
\end{Conj}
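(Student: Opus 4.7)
The plan is to combine Corollary \ref{corBFHA} with the one-variable case of this same conjecture. Applying Corollary \ref{corBFHA} to the edge $W=\bC^n$ (the whole ambient space) and using the indecomposability of $\prod_{j=1}^r f_j$ to conclude that the total splitting of $F_W=F$ is trivial, with $l_W=1$ and $d_{j,W}^{(1)}=d_j$, we obtain
$$\{t_1^{d_1}\cdots t_r^{d_r}=1\}\ \subset\ \Exp(V(B_F)).$$
On the other hand, by the result of Sabbah and Gyoja, $B_F$ contains a non-zero product of linear polynomials of the form $\alpha_1 s_1+\ldots+\alpha_rs_r+\alpha$ with $\alpha_j\in\bQ_{\ge 0}$ and $\alpha\in\bQ_{>0}$; hence every irreducible component of $V(B_F)$ is itself a hyperplane of this shape. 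Matching exponential images, at least one such component must map onto the codimension-one subtorus $\{\prod t_j^{d_j}=1\}$, and such a component is therefore necessarily a hyperplane
$$H_k:=\{d_1s_1+\ldots+d_rs_r+k=0\}$$
for some $k\in\bZ_{>0}$.

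To pin down $k=n$, I would invoke the one-variable case of the conjecture, i.e.\ the Budur-Mustata-Teitler conjecture, which is Walther's theorem for reduced central essential indecomposable arrangements and extends to the non-reduced case allowed here via the standard relation between $b_{h^m}$ and $b_h$. The diagonal substitution $s_j\mapsto s$ collapses the defining equation of $B_F$ to the single-variable Bernstein-Sato equation for $g=\prod_j f_j$, so the image of $B_F$ along the diagonal is contained in $(b_g)$; hence every root of $b_g$ gives a point of $V(B_F)$ on the diagonal. Applied to the BMT root $-n/d$, this produces $(-n/d,\ldots,-n/d)\in V(B_F)$, a point which lies precisely on the hyperplane $H_n$.

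The main obstacle is to promote this single point to a full codimension-one family inside $V(B_F)$, thereby identifying $k=n$ rather than $k>n$. One natural route is the higher-dimensional analog of Cassou-Nogu\`es-Libgober \cite[Theorem 4.1]{CL}: for essential indecomposable arrangements the jumping polytope of the mixed multiplier ideals of $F$ is expected to have a codimension-one facet $\{\sum d_j c_j=n\}$ at the origin, and the defining linear form of such a facet ought to divide an element of $B_F$, just as it does for $n=2$. A more direct $\cD$-module route is to use Euler homogeneity: since each $f_j$ is homogeneous of degree $d_j$, the operator $E+n=\sum_i\partial_ix_i$ satisfies
$$(E+n)\prod_j f_j^{s_j}=\Bigl(\sum_j d_js_j+n\Bigr)\prod_j f_j^{s_j},$$
so the conjecture reduces to establishing $\sum_i\partial_i x_i\prod_j f_j^{s_j}\in\cD_X[s_1,\ldots,s_r]\prod_j f_j^{s_j+1}$, via an argument modeled on Walther's single-variable proof using the structure of the logarithmic derivations of an essential indecomposable arrangement.

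In either route the hard part is precisely the identification of the correct integer constant $n$: knowing only that a subtorus lies in $\Exp(V(B_F))$ leaves an ambiguity by a shift in $\bZ$, and closing this gap is exactly where the essential indecomposability hypothesis must enter, mirroring the role it plays in the single-variable BMT conjecture.
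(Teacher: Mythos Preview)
The statement you are attempting to prove is Conjecture~\ref{conjND}, and the paper does \emph{not} prove it; it is posed as an open problem (the multi-variable analogue of the one-variable conjecture \cite[Conjecture~1.2]{BMT}). So there is no ``paper's own proof'' to compare against.

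Your first step---producing a hyperplane $H_k=\{d_1s_1+\cdots+d_rs_r+k=0\}\subset V(B_F)$ for \emph{some} integer $k$---is exactly the content of Corollary~\ref{cor1}, which the paper does prove (via Theorem~\ref{thmConjIn} and Corollary~\ref{corZH}). The paper then remarks, immediately before stating Conjecture~\ref{conjND}: ``It is tempting to conjecture that $k=n$ in Corollary~\ref{cor1}.'' In other words, the identification $k=n$ \emph{is} the conjecture, and you correctly recognise this as the hard part. Two small corrections on your setup: the relevant edge is $W=\{0\}$, not $W=\bC^n$ (the origin is an edge precisely because the arrangement is essential); and Corollary~\ref{corBFHA} as stated requires the $f_j$ to be linear forms, so for general central $f_j$ you should invoke Corollary~\ref{cor1} directly, or pass through Remark~\ref{remRel}.

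Your attempt to pin down $k=n$ has a genuine gap that you yourself flag. The diagonal argument is correct as far as it goes: the substitution $s_j\mapsto s$ sends $B_F$ into $(b_g)$ with $g=\prod_j f_j$, so any root of $b_g$ yields a diagonal point of $V(B_F)$, and the BMT root $-n/d$ (assuming the one-variable conjecture, which is also open in this paper) lands on $H_n$. But this single point need not lie on any of the codimension-one components $H_k$ already known to sit in $V(B_F)$; it could just as well lie on a component of $V(B_F)$ of a different slope, or of higher codimension. Since the $H_k$ for varying $k$ are parallel and hence disjoint, knowing one point on $H_n\cap V(B_F)$ does not force $H_n\subset V(B_F)$. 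Your proposed routes via a higher-dimensional Cassou-Nogu\`es--Libgober statement or a Walther-style Euler-operator argument are reasonable heuristics, but neither is available in the paper and both would constitute new results; the conjecture remains open.
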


\subsection{Multi-Variable Monodromy Conjecture.}\label{subsMVMC} We discuss the relation between multi-variable topological zeta functions on one hand, and  Sabbah specialization complexes and  Bernstein-Sato ideals, on other hand.

Let $F=(f_1,\ldots, f_r)$ with $0\ne f_j\in\bC[x_1,\ldots,x_n]$. We keep the notation from 1.1. Let $\mu: Y\ra X$ be a  log resolution of $\prod_jf_j$.  Let $E_i$ for $i\in S$ be the collection of irreducible components of the zero locus of $(\prod_jf_j)\circ\mu$. Let $a_{i,j}$ be the order of vanishing of $f_j$ along $E_i$, and let $k_i$ be the order of vanishing of the determinant of the Jacobian of $\mu$ along $E_i$. For $I\subset S$, let $E_I^\circ=\cap_{i\in I}E_i \smallsetminus\cup_{i\in S\smallsetminus I}E_i$. With this notation, the {\it topological zeta function} of $F=(f_1,\ldots, f_r)$ is

$$
Z_F^{\, top}(s_1,\ldots ,s_r):=\sum_{I\subseteq S}\chi(E_I^{\circ})\cdot\prod_{i\in I}\frac{1}{a_{i,1}s_1+\ldots a_{i,r}s_r+k_i+1}.
$$
This rational function is independent of the choice of log resolution. Define 
$$
PL(Z_F^{\, top}(s_1,\ldots ,s_r))
$$
to be the polar locus in $\bC^r$.

The following is the {\it Topological Multi-Variable Monodromy Conjecture}, slightly different than phrased by Loeser, see \cite{Ni, L-MV}:

\begin{Conj} \label{conjMVMonConj}
$$
\Exp (PL(Z_F^{\, top}))
\subset \bigcup_{x\in D} \Supp^{unif}_x (\psi_F\bC_X).$$
\end{Conj}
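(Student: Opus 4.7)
The plan is to derive Conjecture \ref{conjMVMonConj} from the classical one-variable Topological Monodromy Conjecture by restricting $Z_F^{\,top}$ along suitable rational lines so as to reduce to a single polynomial of the form $g_m:=\prod_j f_j^{m_j}$ for $m\in\bN^r$. Note that $PL(Z_F^{\,top})$ is contained in the finite union of rational affine hyperplanes $H_i=\{a_{i,1}s_1+\cdots+a_{i,r}s_r+k_i+1=0\}$ indexed by those $i\in S$ which survive cancellation, and the image $\Exp(H_i)$ is a torsion coset of a subtorus of $(\bC^*)^r$ on which the torsion points are Zariski dense. Since the right-hand side of Conjecture \ref{conjMVMonConj} is Zariski closed by Theorem \ref{ThmGenLib}, it suffices to show that for every torsion point $\zeta=\Exp(\alpha)$ coming from a sufficiently generic $\alpha\in H_i\cap\bQ^r$ there exists $x\in D$ with $\zeta\in\Supp^{unif}_x(\psi_F\bC_X)$.

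Fix such an $\alpha$, pick $N\in\bN$ with $N\alpha\in\bZ^r$, and parametrize candidates $m_j=N(\alpha_j+k_j)$ with $k_j\in\bZ$ subject to $m_j>0$ and the single linear condition $\sum_j k_j a_{i,j}=0$. The substitution $s_j\mapsto m_j s$ into the defining formula yields $Z_F^{\,top}(m_1 s,\ldots,m_r s)=Z_{g_m}^{\,top}(s)$, since the same log resolution $\mu$ resolves $g_m$ and the multiplicity of $g_m\circ\mu$ along $E_{i'}$ is $\sum_j m_j a_{i',j}$. By construction $\sum_j m_j a_{i,j}=-N(k_i+1)$, so the factor indexed by $i$ produces a potential pole at $s_0=1/N$. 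For generic $(k_j)$ in the affine hyperplane $\sum_j k_j a_{i,j}=0$, no other index $i'\ne i$ creates a competing pole at $s_0$, and the residue of $Z_{g_m}^{\,top}$ at $s_0$ is, up to a non-zero rational factor, the residue of $Z_F^{\,top}$ along $H_i$ evaluated at $\alpha$. Choosing $\alpha$ outside the proper zero locus of this residue on $H_i$, which is still possible by density of torsion in its complement, gives $s_0\in PL(Z_{g_m}^{\,top})$.

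Invoking the single-variable Topological Monodromy Conjecture for $g_m$ supplies $x\in D$ and a cohomology class in $H^\bullet(M_{g_m,x},\bC)$ on which the monodromy acts by $\Exp(s_0)$. To translate this into a statement about $\psi_F\bC_X$, use Sabbah's compatibility of specialization with the monomial map $\mu_m:\bC^*\to(\bC^*)^r$, $t\mapsto(t^{m_1},\ldots,t^{m_r})$: the nearby cycles complex $\psi_{g_m}\bC_X$ is the restriction of scalars of $\psi_F\bC_X$ along the induced ring map $A\to\bC[t,t^{-1}]$, so the eigenvalue $\Exp(s_0)$ corresponds to the point
$$\bigl(\Exp(s_0)^{m_1},\ldots,\Exp(s_0)^{m_r}\bigr)=\Exp(\alpha)=\zeta$$
in $\Supp_x(\psi_F\bC_X)$. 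The uniform support accommodates the trivial factors coming from those indices $j$ with $f_j(x)\ne 0$, for which $\psi_{g_m}$ is locally unaffected, and this promotes the conclusion to $\zeta\in\Supp^{unif}_x(\psi_F\bC_X)$.

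The principal difficulty is the non-cancellation claim in the second paragraph: one must verify that the Euler-characteristic sum over subsets $I\subseteq S$ with $i\in I$ that contribute to the residue along $H_i$ is not cancelled once we specialize via $(k_j)$, using only the hypothesis that $H_i$ is an uncancelled component of $PL(Z_F^{\,top})$. A secondary but more routine point is the precise form of Sabbah's compatibility of $\psi_F$ with the monomial base change $\mu_m$, which should follow directly from the constructions in \cite{Sab} together with finite base change for nearby cycles.
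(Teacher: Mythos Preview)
First, note that Conjecture \ref{conjMVMonConj} is stated as an open conjecture; the paper does not prove it unconditionally. What the paper does prove is Theorem \ref{thmR1}: assuming the classical $r=1$ Monodromy Conjecture for the relevant class of polynomials, the multi-variable statement follows. Your proposal is really a sketch of this reduction (you explicitly invoke the single-variable conjecture for $g_m$), so it should be read as an attempt at Theorem \ref{thmR1}, not at the conjecture itself.

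With that understood, your approach and the paper's are essentially the same: specialize $F$ to $g_m=\prod_j f_j^{m_j}$ via $s_j\mapsto m_j s$, identify $Z_F^{\,top}(m_1s,\ldots,m_rs)=Z_{g_m}^{\,top}(s)$, apply the one-variable conjecture, and transport the resulting monodromy eigenvalue back via the specialization compatibility of $\psi_F$. The ``Sabbah compatibility'' you cite is exactly Proposition \ref{propSuppSp} in the paper, so that secondary point is already taken care of.

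The substantive difference is in how the non-cancellation difficulty is handled. You propose to avoid cancellation by choosing $\alpha\in H_i\cap\bQ^r$ generic (off the zero locus of the residue of $Z_F^{\,top}$ along $H_i$) and then $(k_j)$ generic. The paper does not argue this way. Instead it works with Zariski closures: writing $T=H_i$ and $T\cap ZL(Z_F^{\,top})=Z_{lin}\cup Z_{nonlin}$, the linear part $Z_{lin}$ is handled by restricting to one-parameter specializations $\phi_\bm$ whose image avoids $\Exp(Z_{lin})$, but for the non-linear part $Z_{nonlin}$ the issue is precisely that $\Exp(Z_{nonlin})$ may be Zariski dense in $\Exp(T)$, so no density-of-torsion argument on one-parameter lines can bypass it. The paper's fix is to replace row vectors $\bm$ by matrices $M\in\bN^{p\times r}$ with $1<p<r$, for which $T\cap W_M=\emptyset$ automatically (since $W_M$ then has too small dimension), and to run the induction on $p$. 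Your genericity argument does not address this case: once $\Exp$ is applied, the ``bad'' set of $\alpha$'s can have dense image in $\Exp(H_i)$, and there is no evident way to find enough good torsion preimages using only one-dimensional specializations. So the gap you flag is real, and the paper's resolution genuinely requires the pass to higher-rank $M$.
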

When $r=1$, this is the Topological Monodromy Conjecture of Igusa-Denef-Loeser saying that poles of the topological zeta function give eigenvalues of the Milnor monodromy. In fact, in response to a question of V. Shende, the general case follows from the $r=1$ case:

\begin{Thm}\label{thmR1} Let $\cC$ be a class of non-zero polynomials stable under multiplication. If the  Monodromy Conjecture holds for polynomials in $\cC$, then the Multi-Variable Monodromy Conjecture holds for maps $F=(f_1,\ldots,f_r)$ with $f_j$ in $\cC$.
\end{Thm}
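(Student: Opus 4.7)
The plan is to reduce the multi-variable conjecture to the one-variable case by restricting $Z^{\,top}_F$ to generic integer one-parameter subgroups. The key observation is that if $\mu:Y\to X$ is a log resolution of $\prod_j f_j$ and $g_{\bm}:=f_1^{m_1}\cdots f_r^{m_r}$ for $\bm=(m_1,\ldots,m_r)\in\bZ_{>0}^r$, then $\mu$ is simultaneously a log resolution of $g_{\bm}$, with $\ord_{E_i}(g_{\bm}\circ\mu)=\sum_j m_j a_{i,j}$ along each $E_i$. A direct computation with the resolution formula then gives
$$
Z^{\,top}_{g_{\bm}}(s)\;=\;Z^{\,top}_F(m_1 s,\ldots,m_r s).
$$
Since $\cC$ is stable under multiplication, $g_{\bm}\in\cC$ and the single-variable Monodromy Conjecture applies to it.

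Fix a polar hyperplane $H:\sum_j a_j s_j+k+1=0$ of $Z^{\,top}_F$, coming from some divisor $E_{i_0}$; we will show $\Exp(H)\subset\bigcup_{x\in D}\Supp^{unif}_x(\psi_F\bC_X)$. For generic $\bm\in\bZ_{>0}^r$ the line $\{(m_1 s,\ldots,m_r s):s\in\bC\}$ meets the polar hyperplanes of $Z^{\,top}_F$ at mutually distinct points of $\bC$, so no accidental cancellations occur after restriction and $s_0=-(k+1)/\sum_j a_j m_j$ is a genuine pole of $Z^{\,top}_{g_{\bm}}(s)$. The single-variable Monodromy Conjecture applied to $g_{\bm}$ produces a point $x(\bm)\in D$ at which $\lambda:=\exp(2\pi i s_0)$ is a monodromy eigenvalue.

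Next I would translate this eigenvalue statement into a point of $\Supp^{unif}_{x(\bm)}(\psi_F\bC_X)$ via the compatibility of Sabbah's specialization with pullback along the one-parameter subgroup $\bC^*\to(\bC^*)^r$, $t\mapsto(t^{m_1},\ldots,t^{m_r})$: the nearby cycles $\psi_{g_{\bm}}\bC_X$ are obtained from $\psi_F\bC_X$ by extension of scalars along $A\to\bC[t,t^{-1}]$, $t_j\mapsto t^{m_j}$. Consequently, a monodromy eigenvalue $\lambda$ of $g_{\bm}$ at $x(\bm)$ lifts to $(\lambda^{m_1},\ldots,\lambda^{m_r})\in\Supp_{x(\bm)}(\psi_F\bC_X)$, with the uniform-support variant absorbing the directions where $f_j(x(\bm))\neq 0$. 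This places the torsion point $\Exp(-(k+1)\bm/\sum_j a_j m_j)\in\Exp(H)$ in $\Supp^{unif}_{x(\bm)}(\psi_F\bC_X)$.

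Finally, as $\bm$ ranges over $\bZ_{>0}^r$ the resulting torsion points are Euclidean-dense in $\Exp(H\cap\bR^r_{<0})$, which is a non-empty open subset of the real compact form $\Exp(H)\cap(S^1)^r$. By the identity principle for real-analytic functions, a non-empty open subset of the real form of a complex algebraic torus is Zariski dense in the torus, so our torsion points are Zariski dense in the irreducible variety $\Exp(H)$. Since $\bigcup_{x\in D}\Supp^{unif}_x(\psi_F\bC_X)$ is closed in $(\bC^*)^r$, the inclusion $\Exp(H)\subset\bigcup_{x\in D}\Supp^{unif}_x$ follows; running over the finitely many polar hyperplanes of $Z^{\,top}_F$ completes the proof. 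The main obstacle I anticipate is the compatibility step, i.e., justifying that eigenvalues of the monodromy of $g_{\bm}$ correspond to points of $\Supp^{unif}_x(\psi_F\bC_X)$ with the prescribed coordinates under $t\mapsto(t^{m_1},\ldots,t^{m_r})$; once this is in place, the generic-position argument avoiding pole cancellations and the Zariski-density argument proceed routinely.
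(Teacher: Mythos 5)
Your proposal is in essence the same as the paper's: reduce to the one-variable case by restricting $Z^{\,top}_F$ to integer one-parameter subgroups $s\mapsto(m_1s,\ldots,m_rs)$, use that $g_\bm=\prod f_j^{m_j}\in\cC$, invoke the one-variable Monodromy Conjecture, transport the resulting monodromy eigenvalues back into $\Supp^{unif}_x(\psi_F\bC_X)$ via the specialization compatibility for Sabbah's functor (the paper's Proposition~\ref{propSuppSp}, which rests on \cite[2.3.8]{Sab}), and conclude by a density argument. The one place where you diverge is the treatment of the poles that disappear upon restriction. You address it by choosing the direction $\bm$ generically so that the pole along the fixed hyperplane $H$ survives; the paper instead takes the union over \emph{all} $\bm$, which forces it to track an error term $W_\bm$ (the linear components of $\wti\phi_\bm^{-1}(PL)\cap\wti\phi_\bm^{-1}(ZL)$), split the cancellation locus $T\cap ZL(Z^{\,top}_F)$ into linear and nonlinear pieces, and handle the nonlinear piece by passing to higher-dimensional one-parameter families $\phi_M$ with $1<p<r$. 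Your ``pick good $\bm$ and argue Zariski density of the resulting torsion points via the real-analytic identity principle'' avoids that case analysis entirely, which is arguably a cleaner route to the same conclusion.

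One point to fix: the genericity condition you state --- that the line meets the polar hyperplanes of $Z^{\,top}_F$ at mutually distinct points of $\bC$ --- is not the condition that prevents the pole at $s_0$ from disappearing. Two poles colliding cannot cancel a pole; what kills the pole is a zero of the \emph{numerator} of $Z^{\,top}_F$ at $s_0\bm\in H$. So the condition you actually need is that the intersection point $s_0(\bm)\bm$ avoid $H\cap ZL(Z^{\,top}_F)$, where $ZL$ denotes the zero locus of the rational function in lowest terms. Since $H$ is a genuine polar hyperplane, $ZL(Z^{\,top}_F)$ does not contain $H$, so $H\cap ZL(Z^{\,top}_F)$ is a proper closed subvariety of $H$, and (because $H$ does not pass through the origin, so the map ``direction of $\bm$'' $\mapsto s_0(\bm)\bm$ is injective) the directions $\bm\in\bZ^r_{>0}$ for which the intersection point lies in $H\cap ZL$ form a thin set. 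With this corrected genericity condition, the remainder of your argument --- Euclidean density of the torsion points $\Exp(s_0(\bm)\bm)$ in $\Exp(H\cap\bR^r_{<0})$, Zariski density of that set in $\Exp(H)$ via the compact real form, and closedness of $\bigcup_x\Supp^{unif}_x(\psi_F\bC_X)$ --- goes through as you describe.
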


Examples of classes of polynomials stable under multiplication for which the Monodromy Conjecture is known include plane curves \cite{Lo} and hyperplane arrangements \cite{BMT}. Thus Theorem \ref{thmR1} reproves the Multi-Variable Monodromy Conjecture  for plane curves due to Nicaise \cite{Ni}, and proves it for hyperplane arrangements.

One can ask how natural is to specialize the Monodromy Conjecture. We define later what it means to specialize $F$ to another collection $G$ of possibly fewer polynomials, see Definition \ref{dfSpP}. For example,  $F=(f_1,\ldots ,f_r)$ specializes to $G=(f_1,\ldots ,f_{r-1})$, and it also specializes to $\prod_{j=1}^rf_j$. In the first example, the specialization loses in some sense $f_r$, where as in the second example none of the $f_j$ are lost. We call the second type a non-degenerate specialization, see Definition \ref{dfSpP}. We show the following naturality with respect to non-degenerate specializations of the Monodromy Conjecture:

\begin{Thm}\label{thmSp} Assume that Conjecture  \ref{conjMVMonConj} holds for a given $F$. If $G$ is a non-degenerate specialization of $F$, then Conjecture \ref{conjMVMonConj} also holds for $G$.
\end{Thm}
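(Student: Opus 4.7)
The plan is to translate the non-degenerate specialization $F\rightsquigarrow G$ into a concrete linear-algebraic map between the parameter spaces, and then transport the polar locus of $Z_F^{\,top}$ and the uniform support of $\psi_F\bC_X$ under this map in parallel. Write $G=(g_1,\ldots,g_{r'})$ with $g_j=\prod_{k=1}^{r} f_k^{m_{jk}}$, where $m_{jk}\in\bZ_{\geq 0}$, and non-degeneracy ensures that for each $k$ some $m_{jk}>0$, so that the reduced divisor of $\prod_j g_j$ agrees with that of $\prod_k f_k$. Let $M\colon\bC^{r'}\to\bC^{r}$ be the linear map $M(s')_k=\sum_j m_{jk}\,s'_j$, and let $\widetilde{M}\colon(\bC^*)^{r'}\to(\bC^*)^{r}$ be its multiplicative lift $\widetilde{M}(t')_k=\prod_j(t'_j)^{m_{jk}}$; these satisfy $\Exp\circ M=\widetilde{M}\circ\Exp$.

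On the zeta function side, I would exploit that any log resolution $\mu\colon Y\to X$ of $\prod_k f_k$ is simultaneously a log resolution of $\prod_j g_j$, with vanishing orders $a^{G}_{i,j}=\sum_k m_{jk}\,a_{i,k}$ along each component $E_i$, while the strata $E_I^{\circ}$ and the relative canonical orders $k_i$ are unchanged. Substituting into the defining formula yields the clean identity $Z_G^{\,top}(s')=Z_F^{\,top}(M(s'))$, so $PL(Z_G^{\,top})=M^{-1}(PL(Z_F^{\,top}))$. Since $PL(Z_F^{\,top})$ is a union of integral affine hyperplanes, applying $\Exp$ and using $\Exp\circ M=\widetilde{M}\circ\Exp$ gives $\Exp(PL(Z_G^{\,top}))=\widetilde{M}^{-1}(\Exp(PL(Z_F^{\,top})))$.

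The key step, and the main obstacle, is the Sabbah-side inclusion
$$\widetilde{M}^{-1}\bigl(\Supp^{unif}_{x}(\psi_F\bC_X)\bigr)\;\subset\;\Supp^{unif}_{x}(\psi_G\bC_X)\qquad\text{for every }x\in D.$$
The plan here is to reduce to the situation of Theorem \ref{thm2}: when the local factors are distinct, reduced and irreducible, both sides are the cohomology support locus of the common open $U_{F,x}=U_{G,x}$ embedded, respectively, in $(\bC^*)^{r}$ and $(\bC^*)^{r'}$. The two embeddings are related by exactly $\widetilde{M}$, because the monodromy of $g_j$ around the elementary loop at $V(f_k)$ equals the linking number $m_{jk}$; the inclusion (in fact, equality) then becomes tautological. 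The delicate part is the general case without the reducedness/irreducibility hypothesis: one must unpack the definition of uniform support and check that the reductions to distinct irreducible factors on the $F$ and $G$ sides are intertwined by $\widetilde{M}$, and that non-degeneracy prevents any spurious components from being introduced or lost.

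Combining, and applying Conjecture \ref{conjMVMonConj} for $F$ in the middle step,
\begin{align*}
\Exp(PL(Z_G^{\,top}))&=\widetilde{M}^{-1}\bigl(\Exp(PL(Z_F^{\,top}))\bigr)\\
&\subset\widetilde{M}^{-1}\Bigl(\bigcup_{x\in D}\Supp^{unif}_x(\psi_F\bC_X)\Bigr)\\
&\subset\bigcup_{x\in D}\Supp^{unif}_x(\psi_G\bC_X),
\end{align*}
which is Conjecture \ref{conjMVMonConj} for $G$.
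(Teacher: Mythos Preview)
Your overall strategy matches the paper's: pull back the polar locus and the uniform support along the specialization map and sandwich the Monodromy Conjecture for $F$ between them. But there is one genuine overclaim. You assert $PL(Z_G^{\,top})=M^{-1}(PL(Z_F^{\,top}))$, and then $\Exp(PL(Z_G^{\,top}))=\widetilde{M}^{-1}(\Exp(PL(Z_F^{\,top})))$. Neither equality holds in general: from $Z_G^{\,top}(s')=Z_F^{\,top}(M(s'))$ a pole of $Z_F^{\,top}$ can be cancelled by a zero along the image of $M$, so only the inclusion $PL(Z_G^{\,top})\subset M^{-1}(PL(Z_F^{\,top}))$ is valid; and $\Exp\circ M=\widetilde{M}\circ\Exp$ gives only $\Exp(M^{-1}(S))\subset\widetilde{M}^{-1}(\Exp(S))$, with equality requiring $\widetilde{M}$ injective. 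The paper records exactly these one-sided inclusions (see the discussion around $W_M$ and display~(\ref{eqBzz})). Fortunately, your final chain uses only the direction $\subset$, so the argument survives once you downgrade the two equalities to inclusions.

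On the Sabbah side your route differs from the paper's. You propose to deduce $\widetilde{M}^{-1}(\Supp^{unif}_x(\psi_F\bC_X))\subset\Supp^{unif}_x(\psi_G\bC_X)$ by reducing to the cohomology-support-locus description of Theorem~\ref{thm2} and tracking the two torus embeddings of $\cV(U_{F,x})=\cV(U_{G,x})$ through $\widetilde{M}$. This works cleanly in the reduced, mutually-distinct case but, as you yourself flag, the general case requires extra bookkeeping with the uniform-support machinery. The paper bypasses this by invoking Proposition~\ref{propSuppSp}, which gives the stronger \emph{equality} $\phi_M^{-1}(\Supp^{unif}_x(\psi_F\bC_X))=\Supp^{unif}_x(\psi_G\bC_X)$ directly from Sabbah's identity $\psi_F\bC_X\otimes^L_A A_M\simeq\psi_G\bC_X$ (\cite[2.3.8]{Sab}) together with a compatibility check of the $F$- and $G$-natural torus splittings. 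Citing that proposition both shortens the argument and removes the ``delicate part'' you identified.
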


Up to now, there has been no multi-variable version  of the Strong Monodromy Conjecture since it was not clear which ideal of Bernstein-Sato type  was the right candidate, see \ref{subsIBST}. Since a strong version should imply the weaker version, the search for the right candidate is related with the search for the multi-variable generalization of the Malgrange-Kashiwara result. Thanks to V. Levandovskyy, we were able access and experiment with implemented algorithms for computing various types of Bernstein-Sato ideals. Based on these computations and based on the other supporting evidence for Conjecture \ref{conj2} put forth in this paper, we make the following {\it Topological Multi-Variable Strong Monodromy Conjecture}:

\begin{Conj}\label{conjTMVSMC}  $$
PL(Z_F^{\, top})\subset V({B}_F) .$$ 
\end{Conj}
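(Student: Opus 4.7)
The plan is to adapt the one-variable reduction strategy of Theorem~\ref{thmR1} and pair it with the single-variable Strong Monodromy Conjecture as input. Let $H$ be an irreducible component of $PL(Z_F^{\,top})$. From the log-resolution formula, $H$ is a hyperplane
$$\{a_1s_1+\cdots+a_rs_r+c=0\}$$
with $a_j=a_{i,j}\in\bZ_{\geq 0}$ and $c=k_i+1\in\bZ_{>0}$, arising from an exceptional divisor $E_i$ of a fixed log resolution $\mu$ of $\prod_j f_j$. The goal is to show $H\subset V(B_F)$.

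First, for any $m=(m_1,\ldots,m_r)\in\bZ_{\geq 1}^r$ the same $\mu$ is a log resolution of $f_m:=f_1^{m_1}\cdots f_r^{m_r}$, with $\ord_{E_i}(f_m\circ\mu)=\sum_j m_ja_{i,j}$. Hence the candidate pole locus of $Z_{f_m}^{\,top}(t)$ includes
$$t_m:=-\frac{c}{\sum_j m_j a_j},$$
and the point $p_m:=(m_1t_m,\ldots,m_rt_m)$ lies on $H$. Following the generic-parameter choice in the proof of Theorem~\ref{thmR1}, one ensures that $t_m$ is an actual pole of $Z_{f_m}^{\,top}$ for $m$ in a Zariski-dense subset of $\bZ_{\geq 1}^r$.

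Second, assuming the single-variable Strong Monodromy Conjecture for $f_m$ yields $b_{f_m}(t_m)=0$. The key algebraic step is a specialization lemma of the form: for every $b(s)\in B_F$, the polynomial $b(m_1t,\ldots,m_rt)$, possibly after multiplication by an explicit polynomial in $t$ with roots outside a suitable range, lies in the principal ideal generated by $b_{f_m}(t)$. Heuristically one starts from the functional equation
$$b(s)\,f_1^{s_1}\cdots f_r^{s_r}=P(s)\cdot f_1^{s_1+1}\cdots f_r^{s_r+1},$$
substitutes $s_j=m_j t$ to obtain $b(mt)\,f_m^{t}=P'(t)\cdot(\prod_j f_j)\cdot f_m^{t}$, clears the excess factor $\prod_j f_j^{m_j-1}$ by iterating the equation, and extracts the one-variable Bernstein-Sato relation for $f_m$. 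Applied at $t=t_m$, this gives $b(p_m)=0$ for all $b\in B_F$, i.e.\ $p_m\in V(B_F)$. As $m$ varies over primitive vectors in $\bZ_{\geq 1}^r$, the points $p_m$ form a Zariski-dense subset of $H$; since $V(B_F)$ is Zariski closed, $H\subset V(B_F)$ follows.

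The principal obstacles are threefold. First, the input single-variable Strong Monodromy Conjecture is itself a celebrated open problem, known only for plane curves and for hyperplane arrangements, so the natural first target is the hyperplane-arrangement case, where Conjecture~\ref{conjND} and Corollary~\ref{corBFHA} supply additional combinatorial leverage. Second, the specialization lemma relating $B_F|_{s_j=m_jt}$ to $b_{f_m}(t)$ must absorb the excess factor $\prod_j f_j^{m_j-1}$ without eliminating the root $t_m$, and this is the delicate algebraic core of the argument. Third, the non-cancellation of residues that ensures $t_m$ is an honest pole of $Z_{f_m}^{\,top}$ for generic $m$ is the same issue handled in Theorem~\ref{thmR1}, but its transfer to the present setting must be verified. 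A completely different approach would route through the sheaf-theoretic side via Conjecture~\ref{conj2} and the weak Multi-Variable Monodromy Conjecture~\ref{conjMVMonConj}, which this paper already proves for hyperplane arrangements; but extracting containment of hyperplanes rather than merely of their $\Exp$-images appears to require a rationality refinement of Theorem~\ref{ThmE}.
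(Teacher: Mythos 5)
This statement is a \emph{conjecture} in the paper, not a theorem: the authors do not prove it, and they explicitly say that the route you are pursuing does not (yet) work. Immediately after stating Conjecture~\ref{conjTMVSMC}, the paper writes that ``at the moment we cannot conclude that the $r=1$ case for the Strong Monodromy Conjecture implies the $r\ge 1$ case, nor that the Strong Monodromy Conjecture is compatible with non-degenerate specializations, but see Remark~\ref{remSpSMC}.'' Your proposal is exactly the attempt to adapt the reduction of Theorem~\ref{thmR1} from the weak to the strong conjecture, and the ``specialization lemma'' you describe as the algebraic core is precisely the step that the paper shows can fail.

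Concretely, your plan requires that for $b\in B_F$ and $\bm\in\bZ_{\ge1}^r$, the restriction of $b$ to the line $s_j=m_j t$ vanishes at every root of $b_{f_{\bm}}$, possibly after clearing an innocuous factor. That is the inclusion $V(B_{F^M})\subset\wti{\phi}_M^{-1}(V(B_F))$ for a row matrix $M=\bm$. Remark~\ref{remSpSMC} states that this inclusion ``can fail, see Example~\ref{exaE},'' and Example~\ref{exaE} exhibits $F=(f,f)$ with $f=x^2+y^3$ and $M=(2\ 2)$, where $V(B_{F^M})$ consists of twelve rational numbers while $\wti\phi_M^{-1}(V(B_F))$ contains only six of them. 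The extra six roots of $b_{f^4}$ are \emph{not} visible after restricting $B_F$ to the line, so no ``correction factor with roots outside a suitable range'' can resurrect them: the phenomenon is that new roots of $b_{f_{\bm}}$ appear that are not shadows of any hyperplane in $V(B_F)$. The one direction that does hold is Lemma~\ref{lemwphi}, namely $\wti{\phi}_M^\#(B_F^{\bm\cdot M})\subset B_{F^M}^{\bm}$, but this uses the auxiliary ideal $B_F^{\bm\cdot M}$, not $B_F=B_F^{\bone}$, and at the level of exact zero loci (as opposed to after applying $\Exp$, where Lemma~\ref{lemExpEq} saves the day for the weak conjecture) the auxiliary ideals are genuinely different from $B_F$.

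What the paper does instead is Theorem~\ref{thmSM}: a proof of Conjecture~\ref{conjTMVSMC} in the hyperplane-arrangement case, conditional on Conjecture~\ref{conjND}. That argument makes no one-variable reduction at all. It uses the canonical log resolution and density of edges to show the polar locus of $Z_F^{\,top}$ sits inside $\bigcup_W P_W$ with $W$ dense, then identifies $P_W$ with a candidate hyperplane $\{\sum_j\deg(f_{j,W})s_j+\codim W=0\}$, invokes Conjecture~\ref{conjND} for the indecomposable restricted arrangement $F_W$ to place $P_W$ in $V(B_{F_W})$, and finally observes that $B_{F_W}=B_{F,x}\supset B_F$ for $x$ a generic point of $W$. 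So the passage is from the multi-variable ideal at a generic point of an edge to the global multi-variable ideal --- never through a one-variable Bernstein-Sato polynomial. Your closing observation that the hyperplane-arrangement case is the ``natural first target'' is correct, and Conjecture~\ref{conjND} is indeed the missing combinatorial input there; but the reduction to $r=1$ that forms the backbone of your argument is exactly the step the paper flags as an obstruction, and cannot be carried out as written.
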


Conjecture \ref{conjTMVSMC} implies Conjecture \ref{conjMVMonConj} if we believe Conjecture \ref{conj2}, hence the adjective ``strong".  At the moment we cannot conclude that the $r=1$ case for the Strong Monodromy Conjecture implies the $r\ge 1$ case, nor that the Strong Monodromy Conjecture is compatible with non-degenerate specializations, but see Remark \ref{remSpSMC}. For hyperplane arrangements we reduce Conjecture \ref{conjTMVSMC}  to Conjecture \ref{conjND}, a result which  was proved for $r=1$ in \cite{BMT}:

\begin{Thm}\label{thmSM} If each $f_j$ defines a (possibly-nonreduced) hyperplane arrangement in $\bC^n$ and if  Conjecture \ref{conjND} holds for the restriction $$F_W=(f_{j,W} \mid f_j(W)=0,  j\in\{1,\ldots, r\})$$ of the hyperplane arrangements to any dense edge $W$ of $\prod_{j=1}^rf_j$, then Conjecture \ref{conjTMVSMC} holds for $F$.
\end{Thm}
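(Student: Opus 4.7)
The plan is to follow the template of \cite{BMT} for $r=1$ and combine it with a transfer of Bernstein-Sato information from dense-edge restrictions back to $F$. First, I would take the De Concini--Procesi wonderful model $\mu\colon Y\to X$ of $\prod_j f_j$; its exceptional divisors are indexed by the edges $W$ of $\prod_j f_j$, with numerical data $a_{W,j}=d_{W,j}:=\deg(f_{j,W})$ and $k_W+1=c_W:=\codim W$, so directly from the defining sum
$$
PL(Z_F^{\, top})\subset\bigcup_W H_W,\qquad H_W\colon\ d_{W,1}s_1+\ldots+d_{W,r}s_r+c_W=0.
$$
Second, I would show that only \emph{dense} edges can contribute a polar hyperplane, i.e.\ $PL(Z_F^{\, top})\subset\bigcup_{W\text{ dense}}H_W$. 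If $W$ is not dense, then $(\prod_j f_j)_W$ factors as a product of two arrangements in complementary subspaces of $\bC^n/W$, and the multiplicativity $\chi(U_1\times U_2)=\chi(U_1)\chi(U_2)$ of Euler characteristics of hyperplane-arrangement complements, iterated across the subposet of edges containing $W$, makes the would-be residue of $Z_F^{\, top}$ along $H_W$ vanish, as in the Veys-type cancellation for $r=1$.

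Third, for each dense edge $W$, the restriction $F_W=(f_{j,W}\mid f_j(W)=0)$ is a central, essential, indecomposable hyperplane arrangement in $\bC^n/W\cong\bC^{c_W}$, so the hypothesis (Conjecture \ref{conjND} for $F_W$) gives $V(\ell_W)\subset V(B_{F_W,0})$, equivalently $B_{F_W,0}\subset\langle\ell_W\rangle$, where $\ell_W:=\sum_{j:f_j(W)=0}d_{W,j}s_j+c_W$. Fourth, I transfer this into $V(B_F)$ by a transverse-slice argument: at a generic point $x\in W$ the germ of $F$ is analytically the product of the germ of $F_W$ at the origin of a transverse slice and of the germs of the $f_j$ with $f_j(W)\ne 0$, which are units at $x$. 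A standard manipulation of the defining functional equations (extracting these units) yields $B_{F,x}=B_{F_W,0}\cdot\bC[s_1,\ldots,s_r]$, hence $B_{F,x}\subset\langle\ell_W\rangle$ where $\ell_W$ is identified with the linear form cutting out $H_W$ in $\bC^r$ (using $d_{W,j}=0$ whenever $f_j(W)\ne 0$). Therefore $H_W=V(\ell_W)\subset V(B_{F,x})\subset V(B_F)$ by \cite[Cor.~6]{BO}, and combining this with Step~2 gives $PL(Z_F^{\, top})\subset V(B_F)$.

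The main obstacle is Step 2: a clean multi-variable analogue of Veys' ``only dense edges contribute'' cancellation. For $r=1$ this is Veys' result via his monomialization of the wonderful model, but in several variables set-theoretic coincidences between candidate pole hyperplanes from dense and non-dense edges can obscure the underlying combinatorial cancellation, and its details must be recast as polynomial identities in $s_1,\ldots,s_r$, carefully tracked across the subposet of edges above each non-dense $W$. The transverse-slice step is routine, relying on the standard observation that the Bernstein-Sato ideal is insensitive to multiplication of some $f_j$ by local units, as is already implicit in \S\ref{subsGBS}.
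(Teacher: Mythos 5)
Your Steps~3 and~4 (transferring Conjecture~\ref{conjND} for $F_W$ to a statement about $B_F$ via a transverse slice at a generic $x\in W$, using that the $f_j$ with $f_j(W)\neq 0$ are units there) match what the paper does. But your Step~2 rests on a misconception, and as a result you have manufactured an obstacle that the paper never faces. The canonical log resolution (the De Concini--Procesi wonderful model, used in the paper via Theorem~\ref{propDP}) is built by blowing up \emph{only} the dense edges, in increasing order of dimension. Consequently the irreducible components of $\mu^{-1}(D)$ --- the strict transforms of the $D_j$ together with the exceptional divisors --- are indexed precisely by the \emph{dense} edges of $\prod_j f_j$, not by all edges. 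Since $Z_F^{\,top}$ is computed from this particular log resolution, the defining sum already gives
$$
PL(Z_F^{\,top})\ \subset\ \bigcup_{W\ \text{dense}} H_W
$$
with no cancellation argument whatsoever: non-dense edges simply do not appear as centers of blowups and hence contribute no divisor. Your statement that the exceptional divisors are ``indexed by the edges $W$'' is the error; it is what leads you to think you need a multi-variable analogue of Veys' residue-vanishing, which you correctly flag as hard and which in fact nobody needs to prove. Replace Step~2 by an appeal to Theorem~\ref{propDP} (and its formula for the numerical data $a_{W,j}=\deg f_{j,W}$, $k_W+1=\codim W$) and the rest of your argument goes through as written, agreeing with the paper's proof.
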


\medskip
On a different note, there has been recent interest in zeta functions attached to differential forms and possible connections with monodromy-type invariants, see N\'emethi-Veys \cite{NV}. Let
$dx=dx_1\wedge\ldots\wedge dx_n$ and let $\omega$ be an $n$-form on $X$. Define
$$
Z^{\;top,\; \omega}_F(s_1,\ldots ,s_r)
$$
in a similar fashion as $
Z^{\;top}_F(s_1,\ldots ,s_r)
$, but with $k_i$ replaced by $ord_{E_i}\omega$. Note that 
$$
Z^{\;top,\; dx}_F(s_1,\ldots ,s_r) = Z^{\;top}_F(s_1,\ldots ,s_r).
$$
One can ask  {\it what would a Monodromy Conjecture predict for $
Z^{\;top, \;\omega}_F
$ ?} See \cite[1.2]{NV} for a discussion. We propose an answer which is very natural and which says that the Monodromy Conjecture for Forms is a special case of the Multi-Variable Monodromy Conjecture. Clearly
$$
Z^{\;top,\; f_{r}dx}_{(f_1,\ldots ,f_{r-1})}(s_1,\ldots ,s_{r-1}) = Z^{\;top}_{F}(s_1,\ldots ,s_{r-1},1).
$$
Hence, the {\it Topological Multi-Variable Monodromy Conjecture for Forms} should be:
$$
\Exp( PL(Z_{(f_1,\ldots,f_{r-1})}^{\, top,\; f_{r}dx}))
\subset \bigcup_{x\in D} \Supp^{unif}_x (\psi_{F}\bC_X) \cap V(t_{r}-1),$$
and the {\it Topological Multi-Variable Strong Monodromy Conjecture for Forms} should be:
$$
PL(Z_{(f_1,\ldots,f_{r-1})}^{\, top,\; f_{r}dx})
\subset V({B}_{F})\cap V(s_r-1) .$$
The Topological Multi-Variable Monodromy Conjecture for Forms is thus equivalent with the usual single-variable Topological Monodromy Conjecture, by Theorem \ref{thmR1}.

It is a standard procedure to adjust statements involving topological zeta functions to obtain statements involving:  local topological zeta functions, (local) $p$-adic zeta functions, and, more generally, (local) motivic zeta functions. For brevity, we shall skip this discussion.

\subsection{Applications.} One of the main applications of the theory of $\cD$-modules is that it leads to algorithms which can be implemented to compute topological invariants. For example, the classical result of Malgrange and Kashiwara led to algorithms for computing Milnor monodromy eigenvalues via the classical Bernstein-Sato polynomial, the first such algorithm being due to Oaku \cite{Oa}. Similarly, Conjecture \ref{conj2} would provide already-implemented algorithms to compute cohomology support loci of hypersurface germs complements. There are no other known algorithms for cohomology support loci applicable in general. Note that Bernstein-Sato ideals are essential for the current algorithms computing cohomology of local systems on complements of projective hypersurfaces, see Oaku-Takayama \cite{OTa}.

\subsection
{ Acknowledgement.} We would like to thank V. Levandovskyy for help with computing examples and for corrections. For computations of the Bernstein-Sato ideals in this paper we used the library {\tt dmod.lib} in  {\sc Singular}  \cite{Sing, SingDmod}. We would also like to thank F. J. Castro-Jim\'{e}nez, A. Dimca, A. Libgober, L. Maxim, M. Schulze, V. Shende, W. Veys, U. Walther, and Y. Yoon for helpful discussions, and the University of Nice for hospitality during writing part of the article. Special thanks are due to B. Wang who helped correct many mistakes in the original version. This work was partially supported by the National Security Agency grant H98230-11-1-0169 and by  the Simons Foundation grant 245850.

\subsection
{ Notation.} All algebraic varieties are assumed to be over the complex number field. A variety is not assumed to be irreducible. The notation $(f_1,\ldots,f_r)$ stands for a tuple, while $\id{f_1,\ldots,f_r}$ will mean the ideal generated by elements $f_j$ of some ring. Loops and monodromy around divisors are meant counterclockwise, i.e. going once around $\{x=0\}$ in a small loop sends $x^\al$ to $e^{2\pi i\al}x^\al$ for any $\al\in\bC$.

\section{Cohomology support loci}

\subsection{ Local systems and cohomology support loci.} Let $M$ be a connected finite CW-complex of dimension $n$. Let $L(M)$ be the group of rank one complex local systems on $M$. We can identify
$$
L(M)=\Hom(\pi_1(M),\bC^*) = \Hom (H_1(M,\bZ),\bC^*) = H^1(M,\bC^*).
$$
Consider the ring $$B:=\bC[H_1(M,\bZ)].$$ Then $L(M)$ is an affine variety with affine coordinate ring equal to $B$.

\begin{exa}\label{exaB} If $U$ denotes the complement in a small open ball centered at a point $x$ in $\bC^n$ of  $r$ mutually distinct analytically irreducible hypersurface germs, then $H_1(U,\bZ)=\bZ^r$ is generated by the classes of small loops around the branches, and
$L(U)=(\bC^*)^r$, see \cite[(4.1.5)]{Di}. By Libgober \cite{Li}, the cohomology support locus $\cV(U)$ is a finite union of torsion translated subtori of $L(U)$. Subtori are automatically defined over ${\bQ}$.
\end{exa}

\begin{exa}\label{exaV} If $V$ denotes the complement in $\bP^{n-1}$ of $r$ mutually distinct reduced and irreducible hypersurfaces of degrees $d_1,\ldots ,d_r$, then 
$$H_1(V,\bZ)=\left [\bigoplus_{j=1}^r\bZ\cdot \gamma_j\right ]/\ \bZ(d_1\gamma_1+\ldots +d_r\gamma_r),$$
where $\gamma_j$ is the class of a small loop centered at a general point on the $j$-th hypersurface,
see \cite[(4.1.3)]{Di}. By Budur-Wang \cite{BW}, the cohomology support locus $\cV(V)$ of any smooth complex quasi-projective variety $V$ is a finite union of torsion translated subtori of $L(V)$. 
\end{exa}

Let $M^{ab}$ be the universal abelian cover of $M$. In other words, $M^{ab}$ is the cover of $M$ given by the kernel of the natural abelianization map
$$ab: \pi_1(M)\ra H_1(M,\bZ).$$

\begin{df}\label{dfcv} The {\it homological Alexander support } of $M$ is the subset
$$
\wti{\cV}(M)  := \bigcup_k\Supp (H_k(M^{ab},\bC))
$$
of $L(M)$, where $\Supp (H_k(M^{ab},\bC))$ is the support of the $B$-module $H_k(M^{ab},\bC)$ .
\end{df}

The homological Alexander support is almost the same as the cohomology support locus (\ref{eqcv}) from the Introduction, see \cite[Theorem 3.6]{PS}:

\begin{thm}\label{thmChTp} 
$\wti{\cV}(M)$ is the set of local systems of rank one on $M$ with non-trivial homology.
\end{thm}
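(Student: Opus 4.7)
The plan is to realize $H_*(M,\cL_\rho)$ as a (derived) tensor product over $B$ and reduce the equality to a Nakayama-type local criterion for complexes. Since $M$ is a finite CW complex, the deck group $H_1(M,\bZ)$ acts freely and cellularly on the universal abelian cover $M^{ab}$, so the cellular chain complex $C_*:=C_*(M^{ab},\bC)$ is a bounded complex of finitely generated free $B$-modules; in particular $B$ is Noetherian and $C_*$ is a complex of flat $B$-modules. For a character $\rho:H_1(M,\bZ)\to\bC^*$ with corresponding maximal ideal $\mathfrak{m}_\rho\subset B$ and residue field $\bC_\rho := B/\mathfrak{m}_\rho$, the standard identification
$$C_*(M,\cL_\rho) \;=\; C_*\otimes_B\bC_\rho$$
gives $H_*(M,\cL_\rho)=H_*(C_*\otimes_B\bC_\rho)$.

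The key tool I would invoke is the following local criterion: if $(R,\mathfrak{m},k)$ is a Noetherian local ring and $D$ is a bounded complex of finitely generated free $R$-modules, then $H_*(D)=0$ if and only if $H_*(D\otimes_R k)=0$. Granting this, apply it with $R:=B_{\mathfrak{m}_\rho}$ and $D:=(C_*)_{\mathfrak{m}_\rho}$. The homology of $D$ equals the localization $(\wti{H}_*(M^{ab},\bC))_{\mathfrak{m}_\rho}$, which is non-zero precisely when $\rho\in\wti{\cV}(M)$. On the other hand, because tensor product commutes with localization and $\bC_\rho$ is $\mathfrak{m}_\rho$-local,
$$H_*(D\otimes_R\bC_\rho)\;=\;H_*(C_*\otimes_B\bC_\rho)\;=\;H_*(M,\cL_\rho),$$
which is non-zero precisely when $\cL_\rho$ has non-trivial homology. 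The criterion then yields the two conditions are equivalent, proving the theorem.

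The main obstacle is the local criterion itself. I would prove it by the standard minimal-resolution device: replace $D$ up to homotopy equivalence by a direct sum $D'\oplus N$, where $N$ is contractible and every differential of $D'$ carries $D'$ into $\mathfrak{m}D'$ (the minimal form of $D$). Since $D\simeq D'\oplus N$, the vanishing $H_*(D)=0$ is equivalent to $D'=0$, which by Nakayama for finitely generated free modules is equivalent to $D'\otimes_R k=0$. But $D'\otimes_R k$ has zero differential, so $H_*(D'\otimes_R k)=D'\otimes_R k$, while $N\otimes_R k$ remains contractible; hence $H_*(D\otimes_R k)=0$ also reduces to $D'\otimes_R k=0$, establishing the criterion. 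The remaining ingredients — the cellular identification of $C_*(M,\cL_\rho)$, flatness of the terms of $C_*$, and compatibility of tensor product with localization — are purely formal.
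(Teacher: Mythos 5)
The paper itself does not prove this theorem; it is quoted from Papadima--Suciu, \cite[Theorem 3.6]{PS}, so there is no in-paper argument to compare against. Your self-contained proof is correct, and it is the standard commutative-algebra reduction: realize $H_*(M,\cL_\rho)$ as $H_*\bigl(C_*(M^{ab},\bC)\otimes_B\bC_\rho\bigr)$, localize at $\mathfrak m_\rho$, and apply a Nakayama-type acyclicity criterion for bounded complexes of finitely generated free modules over a Noetherian local ring, proved via the minimal-complex decomposition. A few minor points you may want to tidy. First, the Noetherianity of $B$ is not a consequence of $C_*$ being finite free; it holds simply because $B=\bC[H_1(M,\bZ)]$ is a finitely generated commutative $\bC$-algebra. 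Second, in the minimal-form step the equivalence ``$H_*(D)=0\Leftrightarrow D'=0$'' uses that a bounded minimal complex with vanishing homology is the zero complex; one should say this follows from Nakayama applied in the lowest nonzero degree (the differential into that degree is surjective yet has image inside $\mathfrak m D'$). Third, you write $\wti{H}_*(M^{ab},\bC)$ where Definition \ref{dfcv} uses unreduced $H_*(M^{ab},\bC)$; this is a typo and harmless here, since the trivial character lies in both sets regardless (it is in $\Supp H_0(M^{ab},\bC)$, and $H_0(M,\bC)\neq 0$).
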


Since $H^k(M,\cL^{-1})=H_k(M,\cL)^\vee$ for a rank one local system $\cL$, the last result implies:

\begin{cor}\label{corChTp} 
$\cV(M)=\{\cL\mid \cL^{-1}\in \wti{\cV}(M)\}.$
\end{cor}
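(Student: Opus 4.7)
The proof will be a short unwinding of definitions: the corollary is presented as a direct consequence of Theorem \ref{thmChTp} combined with the universal-coefficient identity $H^k(M,\cL^{-1})=H_k(M,\cL)^\vee$ which is stated immediately above the corollary. My plan is to make these two inputs explicit and then chase them through the definitions of $\cV(M)$ and $\wti\cV(M)$.

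First, I would fix a rank one local system $\cL \in L(M)$ and consider the equivalence
\[
\cL \in \cV(M) \;\Longleftrightarrow\; H^k(M,\cL) \neq 0 \text{ for some } k,
\]
which is just the defining equation (\ref{eqcv}). Next, I would apply the identity $H^k(M,\cL)=H_k(M,\cL^{-1})^\vee$, obtained by swapping the roles of $\cL$ and $\cL^{-1}$ in the formula noted above; this holds over the field $\bC$ via the universal coefficient theorem applied to the cellular chain complex of $M^{ab}$ viewed as a module over $\bC[\pi_1(M)]$, together with the fact that the $\bC$-dual of a rank one local system is its inverse. Consequently
\[
H^k(M,\cL) \neq 0 \;\Longleftrightarrow\; H_k(M,\cL^{-1}) \neq 0.
\]

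Finally, I would invoke Theorem \ref{thmChTp} applied to $\cL^{-1}$: that theorem identifies $\wti\cV(M)$ with the set of rank one local systems having non-trivial homology in some degree. Therefore
\[
H_k(M,\cL^{-1}) \neq 0 \text{ for some } k \;\Longleftrightarrow\; \cL^{-1} \in \wti\cV(M).
\]
Combining the three equivalences gives $\cL \in \cV(M)$ iff $\cL^{-1} \in \wti\cV(M)$, which is exactly the claimed description of $\cV(M)$.

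There is no real obstacle here; the only thing to verify carefully is the duality identity for local system (co)homology. In particular, one should check that the inversion $\cL \mapsto \cL^{-1}$ on $L(M)$ corresponds to the $\bC$-linear dual of a rank one local system, which is automatic because a rank one local system is given by a character of $\pi_1(M)$ and its dual character is its inverse. Once this is in place, the corollary follows immediately from Theorem \ref{thmChTp}.
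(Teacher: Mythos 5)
Your proposal is correct and matches the paper's own one-line argument: both use the duality $H^k(M,\cL^{-1})=H_k(M,\cL)^\vee$ together with Theorem \ref{thmChTp}. You simply unwind the definitions more explicitly, which is fine but not a different route.
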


\begin{exa}\label{exaHCV} $\cV((\bC^*)^r)$ is just a point in $L((\bC^*)^r)=(\bC^*)^r$, corresponding to the  trivial local system.
\end{exa}

\section{Sabbah specialization and local systems}

In this section we write down some properties of the Sabbah specialization complexes. We also prove Theorems \ref{ThmGenLib}, \ref{thm2}, \ref{thm3}, as well as the homogeneous reduction formula mentioned in \ref{subsGBS} and Corollary \ref{cor1}.

 For a ring $R$ and a variety $X$, let $D^b_c(X,R)$ denote the bounded derived category of $R$-constructible sheaves on the underlying analytic variety of $X$.

\subsection{ Sabbah specialization.}\label{subsSSP}
Let $$F=(f_1,\dots , f_r)$$ be a collection of non-zero polynomials $f_j\in\bC[x_1,\ldots ,x_n]$. Let  
\begin{align*}
X&=\bC^n, &\quad &D_j =V(f_j),&
D&=\cup_{j=1}^rD_j, &\quad & U=X\smallsetminus D. 
\end{align*}

 Let $S=\bC^r$, $S^*=(\bC^*)^r$, and denote by $\wti{S^*}$ the universal cover of $S^*$. We denote the affine coordinate ring of $S^*$ by
$$
A=\bC[t_1,t_1^{-1},\ldots,t_r,t_r^{-1}].
$$
Consider the following diagram of fibered squares of natural maps:
$$
\xymatrix{
D \ar@{^(->}[r]_{i_D}& X \ar[d]_{F}& \ar@{_(->}[l]^{j} U \ar[d]_{F_U}  & \ar[l]^{\quad p} \wti{U} \ar[d]_{F_{\wti{U}}} \\
    & S & \ar@{_(->}[l]^{j_S} S^* & \ar[l]^{\quad {p_S}} \wti{S^*}
}
$$

\begin{df}\label{defSab} The Sabbah specialization functor of $F$ is
\begin{align*}
\psi_F=  i_D^* Rj_* Rp_!(j\circ p)^*: D^b_c(X,\bC) & \ra D^b_c(D,A).
\end{align*}
We call ${\psi_F\bC_X}$ the {\it Sabbah specialization complex}. The constructibility $\psi_F$ over $A$ follows from the part (a) of the Lemma \ref{lemSimpl} below.

\end{df}
  
\begin{rem}\label{remOV} (1) This definition is slightly different than \cite[2.2.7]{Sab} where it is called {\it the nearby Alexander complex}. To obtain the definition in {\it loc. cit.}, one has to restrict further to $\cap_jD_j$. 

\me (2)
This definition is also slightly different than the one in \cite{Ni}, where $Rp_!$ is replaced by $Rp_*$. 

\me (3)
When $r=1$, $\psi_f\cF$ as defined here equals $\psi_f\cF[-1]$ as defined by Deligne, see  \cite[p.13]{Br}. 
\end{rem}

A different expression for Sabbah specialization is as follows. Let 
$$\cL=R(p_S)_!\bC_{\wti{S^*}}=(p_S)_!\bC_{\wti{S^*}}.$$  This is the rank-one local system of free $A$-modules on $S^*$ corresponding to the isomorphism $$\pi_1(S^*)\ra \bZ^r=\pi_1(S^*).$$ Define 
$$
\cL^F:= F_U^*\cL .
$$
This is a local system of $A$-modules on $U$. The following is essentially a particular case of  \cite[2.2.8]{Sab} in light of Remark \ref{remOV} (1):

\begin{lem}\label{lemSimpl}
(a) $\psi_F\cF = i_D^*Rj_*(j^*\cF\otimes_{\bC_{U}}\cL^F)$.

(b) In particular, $\psi_F\bC_X = i_D^*Rj_*\cL^F$.
\end{lem}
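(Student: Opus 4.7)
The plan is to unwind the definition $\psi_F = i_D^* Rj_* Rp_! (j\circ p)^*$ and show that $Rp_!(j\circ p)^*\cF$ coincides with $j^*\cF \otimes_{\bC_U} \cL^F$ in $D^b_c(U, A)$. Applying $i_D^* Rj_*$ to this identification then yields part (a), and (b) is the case $\cF = \bC_X$, for which $j^*\cF = \bC_U$ is the tensor identity on $U$.

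First I would split $(j\circ p)^* = p^* \circ j^*$ and invoke the projection formula for $Rp_!$ with $\cG = j^*\cF$ and $\cH = \bC_{\wti{U}}$:
$$Rp_!\bigl(p^*(j^*\cF)\bigr) \;\cong\; j^*\cF \otimes_{\bC_U} Rp_!\bC_{\wti{U}}.$$
It then remains to identify $Rp_!\bC_{\wti{U}}$ with $\cL^F = F_U^*\cL = F_U^* R(p_S)_!\bC_{\wti{S^*}}$. For this I would apply base change to the rightmost Cartesian square of the diagram in \ref{subsSSP}:
$$F_U^* R(p_S)_!\bC_{\wti{S^*}} \;\cong\; Rp_! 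F_{\wti{U}}^*\bC_{\wti{S^*}} \;=\; Rp_!\bC_{\wti{U}}.$$
Splicing these two isomorphisms together and applying $i_D^* Rj_*$ yields (a); then (b) follows immediately from the specialization $\cF = \bC_X$.

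The main subtlety is foundational rather than geometric: the map $p_S$ is not proper, so proper base change cannot be cited directly. However, $p_S$, and hence $p$, is a topological covering with countable discrete fibers, in particular a local homeomorphism between locally compact Hausdorff spaces. Both base change for $Rf_!$ and the projection formula hold in this generality by the standard six-functor formalism for constructible sheaves; moreover $Rp_! = p_!$ since the higher direct images with compact support of a covering map vanish. Once these tools are granted, the argument above is entirely formal, and the constructibility of $\psi_F\cF$ over $A$ asserted in Definition \ref{defSab} is manifest from the expression in (a), since $\cL^F$ is a rank-one local system of free $A$-modules on $U$.
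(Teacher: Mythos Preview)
Your proof is correct and follows essentially the same route as the paper: apply the projection formula to rewrite $Rp_!p^*j^*\cF$ as $j^*\cF\otimes_{\bC_U}Rp_!\bC_{\wti U}$, then use base change for the fibered square over $S^*$ to identify $Rp_!\bC_{\wti U}$ with $F_U^*R(p_S)_!\bC_{\wti{S^*}}=\cL^F$. Your added remarks on why the projection formula and base change hold for the covering map $p$ (and why $Rp_!=p_!$) are welcome clarifications that the paper leaves implicit by citing \cite[2.1.3]{Sab}.
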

\begin{proof}
By the projection formula \cite[2.1.3]{Sab}, $Rj_* Rp_!p^*j^*\cF=Rj_*(j^*\cF\otimes_{\bC_U}Rp_!\bC_{\wti{U}})$. On the other hand, we have $Rp_!\bC_{\wti{U}}=Rp_!F_{\wti{U}}^*\bC_{\wti{S^*}}=F_U^*Rp_!\bC_{\wti{S^*}}=\cL^F$.
\end{proof}

\subsection{Multi-variable monodromy zeta function.}  We recall Sabbah's multi-variable generalization of A'Campo's formula for the monodromy zeta function.

\begin{df}\label{eqSupp} For an $A$-module $G$, we denote by
$
\Supp (G)
$
the support of $G$ in $S^*=\Spec A$. For an $A$-constructible sheaf $\cG$ on $X$ and a point $x$ in $X$, the support of $\cG$ at x is the support of the stalk:
$$
\Supp_x(\cG):=\Supp (\cG_x)\subset S^*
$$
The support at the point $x$  of a complex $\cG\in D^b_c(D,A)$ is 
$$
\Supp _x (\cG) := \bigcup _k \Supp _x (H^k(\cG)) \subset S^*.
$$
\end{df}

\begin{df}
The {\it multi-variable monodromy zeta function} of $\cG\in D^b_c(D,A)$ at the point $x$  is defined to be the cycle
$$
\zeta_x (\cG) :=\sum _{P} \chi_x(\cG_P)\cdot V(P),
$$  
where the sum is over prime ideals $P$ of $A$ of  height  one among those such that their zero locus $V(P)\subset \Supp_x(\cG)$, $\cG_P$ is the localization of $\cG$ at the prime ideal $P$, and $\chi_x$ is the stalk Euler characteristic. 
\end{df}

Codimension-one cycles on $S^*$  can be viewed as  rational functions in $t_1,\ldots , t_r$ up to multiplication by a monomial. We will use the rational function notation for the multi-variable monodromy zeta function of Sabbah specialization complex: 
$$
\zeta_x(\psi_{F}\bC_X)(t_1,\ldots ,t_r):=\zeta_x(\psi_{F}\bC_X).
$$

Let $\mu: Y\ra X$ be a  log resolution of $\prod_jf_j$.  Let $E_i$ for $i\in S$ be the collection of irreducible components of the zeros locus of $(\prod_jf_j)\circ\mu$. Let $a_{i,j}$ be the order of vanishing of $f_j$ along $E_i$, and let $k_i$ be the order of vanishing of the determinant of the Jacobian of $\mu$ along $E_i$. For $I\subset S$, let $E_I^\circ=\cap_{i\in I}E_i \smallsetminus \cup_{i\in S-I}E_i$. The following is Sabbah's generalization of A'Campo's formula, see \cite[2.6.2]{Sab}:

\begin{thm}\label{thmSab} If $f_j(x)=0$ for all $j$, then
$$\zeta _x(\psi_{F}\bC_X)(t_1,\ldots ,t_r) = \prod_{i\in I \text{ with } \mu(E_i)=x} (t_1^{a_{i,1}}\ldots t_r^{a_{i,r}}-1)^{-\chi(E_i^\circ)}.$$
\end{thm}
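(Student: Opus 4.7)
The approach is to imitate A'Campo's classical proof by pulling the Sabbah specialization complex back along the log resolution $\mu:Y\to X$, reducing the computation of $\zeta_x$ to a sum of local contributions along the exceptional stratification, and then showing that only the codimension-one strata $E_i^\circ$ with $\mu(E_i)=x$ contribute non-trivially.

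The first step is to establish the compatibility
$$
R\mu_{*}\,\psi_{F\circ\mu}\bC_Y \;\simeq\; \psi_F\bC_X.
$$
This follows from the expression $\psi_F\bC_X=i_D^*Rj_*\cL^F$ given by Lemma \ref{lemSimpl}(b), combined with proper base change along the proper map $\mu$ and the fact that $\mu$ is an isomorphism over $U$, so that $\cL^{F\circ\mu}$ agrees with $\mu^*\cL^F$ on $\wti{U}:=\mu^{-1}(U)$. Passing to stalks at $x$ and stratifying the fibre $\mu^{-1}(x)=\bigsqcup_{I}(E_I^\circ\cap\mu^{-1}(x))$, additivity of the codimension-one cycle-valued Euler characteristic along constructible stratifications yields
$$
\zeta_x(\psi_F\bC_X)\;=\;\sum_{I\subseteq S}\chi\bigl(E_I^\circ\cap\mu^{-1}(x)\bigr)\cdot\zeta^{\mathrm{loc}}_I,
$$
where $\zeta^{\mathrm{loc}}_I$ denotes the value of $\zeta_y(\psi_{F\circ\mu}\bC_Y)$ at a generic point $y\in E_I^\circ$ (which will be shown constant along the stratum by the local computation below).

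Next, compute $\zeta^{\mathrm{loc}}_I$ via a toric model. At a point $y\in E_I^\circ$ with $I=\{i_1,\ldots,i_k\}$, choose local coordinates $(y_1,\ldots,y_n)$ in which $E_{i_\nu}=\{y_\nu=0\}$ and $f_j\circ\mu=u_j\,y_1^{a_{i_1,j}}\cdots y_k^{a_{i_k,j}}$ for units $u_j$. Since $\psi_F$ is unchanged under multiplication of the $f_j$ by units, the local stalk reduces to that of the pure monomial model $G:\bC^n\to\bC^r$ with $G(y)=(y_1^{a_{i_1,1}}\cdots y_k^{a_{i_k,1}},\ldots,y_1^{a_{i_1,r}}\cdots y_k^{a_{i_k,r}})$. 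A Künneth decomposition of the local model $(D^*)^k\times D^{n-k}$ then identifies
$$
(\psi_G\bC)_0 \;\simeq\; K\bigl(T_1-1,\ldots,T_k-1;\,A\bigr),\qquad T_\nu:=t_1^{a_{i_\nu,1}}\cdots t_r^{a_{i_\nu,r}},
$$
the Koszul complex of the elements $T_\nu-1\in A$. For $k=1$ this is just $[A\xrightarrow{T_1-1}A]$, whose contribution to the codimension-one cycle is $\zeta^{\mathrm{loc}}_{\{i\}}=(t_1^{a_{i,1}}\cdots t_r^{a_{i,r}}-1)^{-1}$.

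The main obstacle is the vanishing $\zeta^{\mathrm{loc}}_I=0$ for all $|I|\geq 2$. When the exponent matrix $(a_{i_\nu,j})$ has full rank $k$, the elements $T_\nu-1$ form a regular sequence in $A$, so the Koszul homology is concentrated in codimension $k\geq 2$ of $S^*$ and contributes nothing to the codimension-one cycle. In the degenerate case where the rank is strictly less than $k$, the $T_\nu-1$ share common factors in $A$ and the Koszul homologies have nonzero support in codimension one, but their ranks over $A_P$ at each height-one prime $P$ alternate and telescope to zero, again killing the codimension-one cycle; this is a standard property of the Koszul complex on a non-regular sequence, provable by induction on $k$ using the short exact sequences relating $K(T_1,\ldots,T_k;A)$ and $K(T_1,\ldots,T_{k-1};A)$. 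Once this vanishing is in hand, the additive formula collapses to the terms with $|I|=1$ and $\mu(E_i)=x$ (for which $E_i\subset\mu^{-1}(x)$ and hence $E_{\{i\}}^\circ\cap\mu^{-1}(x)=E_i^\circ$), giving exactly the claimed product $\prod_{\mu(E_i)=x}(t_1^{a_{i,1}}\cdots t_r^{a_{i,r}}-1)^{-\chi(E_i^\circ)}$.
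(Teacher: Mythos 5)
The paper does not prove Theorem~\ref{thmSab}: it is quoted from Sabbah~\cite[2.6.2]{Sab}, so there is no in-house argument to compare yours against. Your proposal is a reasonable reconstruction of the A'Campo--Sabbah proof, and the overall structure is sound: the identification $R\mu_*\psi_{F\circ\mu}\bC_Y\simeq\psi_F\bC_X$ via Lemma~\ref{lemSimpl}(b) and proper base change is correct, the decomposition of the cycle-valued Euler characteristic along the stratification of $\mu^{-1}(x)$ is the right additivity principle, the identification of the local monomial stalk with the Koszul complex $K(T_1-1,\dots,T_k-1;A)$ via Proposition~\ref{propStalk} and the cohomology of $(S^1)^k$ with local coefficients is correct, and the vanishing for $|I|\ge 2$ holds. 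For the degenerate case your telescoping claim is indeed justified: localizing at a height-one prime $P$, the ring $A_P$ is a DVR, so either some $T_\nu-1$ is a unit (the Koszul complex is then acyclic) or all lie in the maximal ideal, and then the exact triangle relating $K_k$, $K_{k-1}$ and $K_{k-1}[-1]$ gives $\chi(K_k)=\chi(K_{k-1})-\chi(K_{k-1})=0$ once $k\ge 2$, since $H^*(K_{k-1})$ is already of finite length over $A_P$ for $k-1\ge 1$.

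The one genuine gap is the last sentence. After killing the strata with $|I|\ge 2$, what your argument actually produces is
$$
\zeta_x(\psi_F\bC_X)=\prod_{i\,:\,x\in\mu(E_i)}\bigl(t_1^{a_{i,1}}\cdots t_r^{a_{i,r}}-1\bigr)^{-\chi(E_i^\circ\cap\mu^{-1}(x))},
$$
which is not literally the displayed formula. Nothing you proved discards the $|I|=1$ terms coming from divisors $E_i$ with $x\in\mu(E_i)$ but $\mu(E_i)\neq x$, such as strict transforms of the $D_j$ or exceptional divisors contracting to positive-dimensional loci through $x$. For those you would need $\chi(E_i^\circ\cap\mu^{-1}(x))=0$. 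This does hold if the log resolution is chosen so that $\mu^{-1}(x)\subset\bigcup_{\mu(E_j)=x}E_j$ (then $E_i^\circ$ is disjoint from $\mu^{-1}(x)$ for every such $E_i$), which is the implicit assumption behind the statement and is satisfied by the resolution used in the proof of Corollary~\ref{corZH}. But for an arbitrary log resolution the reduction fails: for $\mu=\mathrm{id}$ at a smooth point of a single reduced hypersurface your general formula gives $(t-1)^{-1}$ while the product over $\mu(E_i)=x$ is empty. Either make the hypothesis on $\mu$ explicit, or state and keep the more general $\chi(E_i^\circ\cap\mu^{-1}(x))$ version that your argument actually proves.
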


\begin{cor}\label{corZH}
If $f_j$ are non-zero homogeneous polynomials of degree $d_j$ for all $j$, and $\chi(V)\ne 0$, where $V=\bP^{n-1}\smallsetminus \bP(D)$, then
$$
V(t_1^{d_1}\ldots t_r^{d_r}-1)\subset \Supp_0(\psi_F\bC_X).
$$
\end{cor}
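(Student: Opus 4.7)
The plan is to exploit the $\bC^*$-action on $X=\bC^n$ by scaling: the complement $U = X \smallsetminus D$ is preserved because $D$ is homogeneous, and this lets us convert the stalk calculation into a cohomology computation on the projective quotient $V = \bP^{n-1}\smallsetminus \bP(D)$.

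First, by Lemma \ref{lemSimpl} and base change along $j\colon U \hookrightarrow X$, the stalk at the origin is $(\psi_F\bC_X)_0 = \varinjlim_{\epsilon\to 0} R\Gamma(B_\epsilon(0)\cap U,\,\cL^F)$. Since $U$ is invariant under real scaling, radial scaling provides deformation retractions of both $U$ and $B_\epsilon(0)\cap U$ onto the link $S_\epsilon\cap U$; these retractions stay inside $U$, so they are compatible with the $A$-local system $\cL^F$. Hence $(\psi_F\bC_X)_0 \simeq R\Gamma(U,\cL^F)$ as complexes of $A$-modules. Because $\Supp_0(\psi_F\bC_X)$ is Zariski-closed and closed points are dense in $V(t_1^{d_1}\cdots t_r^{d_r}-1)$, it is enough to show that every closed point $\zeta=(\zeta_1,\dots,\zeta_r)$ of this hypersurface lies in $\Supp_0$. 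Because $\cL^F$ is free of rank one over $A$, flat base change gives $R\Gamma(U,\cL^F)\otimes_A^L \bC_\zeta = R\Gamma(U,L_\zeta)$, where $L_\zeta$ is the rank-one $\bC$-local system on $U$ obtained by specializing the monodromies at $\zeta$, and non-vanishing of this complex is enough.

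The $\bC^*$-action realizes $\pi\colon U \to V$ as a $\bC^*$-bundle. Because $f_j$ is homogeneous of degree $d_j$, going once around the $\bC^*$-fiber by $t=e^{2\pi i\theta}$, $\theta\in [0,1]$, multiplies $f_j$ by $e^{2\pi i d_j}$, so the monodromy of $L_\zeta$ around a fiber equals $\prod_j \zeta_j^{d_j}=1$. Hence $L_\zeta = \pi^* L_\zeta^V$ for a rank-one local system $L_\zeta^V$ on $V$.

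Finally, since Euler characteristic with a finite-rank local system depends only on its rank, $\chi(V,L_\zeta^V) = \chi(V)\neq 0$, which forces $H^k(V, L_\zeta^V)\neq 0$ for some $k$; Artin vanishing on the smooth affine variety $V$ of dimension $n-1$ bounds $k\le n-1$. Taking the largest such $k$, the Gysin long exact sequence of the oriented circle bundle underlying $\pi$, with coefficients $L_\zeta^V$, contains the segment
$$H^{k+1}(U,L_\zeta)\xrightarrow{\pi_*} H^{k}(V,L_\zeta^V)\xrightarrow{\cup e} H^{k+2}(V,L_\zeta^V)=0,$$
so $\pi_*$ surjects onto $H^k(V,L_\zeta^V)\neq 0$, giving $H^{k+1}(U,L_\zeta)\neq 0$, as required. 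The main technical check is the Gysin argument for the nontrivial $\bC^*$-bundle $\pi$ (the restriction to $V$ of $\cO_{\bP^{n-1}}(-1)^\times$) with twisted coefficients, together with the identification of the fiber monodromy, which uses homogeneity crucially.
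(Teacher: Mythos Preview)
Your proof is correct and takes a genuinely different route from the paper's. The paper deduces the corollary directly from Theorem~\ref{thmSab}, Sabbah's multi-variable A'Campo formula: choosing a log resolution factoring through the blow-up of the origin, the only exceptional divisor over $0$ is the strict transform of $\bP^{n-1}$, with $E^\circ = V$ and orders $(d_1,\ldots,d_r)$, so the monodromy zeta function at $0$ is exactly $(t_1^{d_1}\cdots t_r^{d_r}-1)^{-\chi(V)}$, and the support contains the given hypersurface whenever $\chi(V)\neq 0$.

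Your argument avoids both resolutions and the A'Campo formula entirely, working instead with the $\bC^*$-bundle $U\to V$, specializing the universal local system at each point $\zeta$ of the hypersurface, and using the Gysin sequence together with the rank-independence of Euler characteristics to push nonvanishing from $V$ up to $U$. This is more elementary and self-contained. The trade-off is that the paper's approach yields strictly more: it computes the full zeta function $\zeta_0(\psi_F\bC_X)$, not just the inclusion of supports, and it plugs directly into the machinery (Theorem~\ref{thmSab}) used elsewhere in the paper. Your approach, on the other hand, makes the topological mechanism transparent and would adapt to settings with a good torus action but no convenient resolution at hand.
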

\begin{proof}
In this case, one can take a log resolution $\mu:Y\ra X$ which factors through the blow-up at $0$, such that the strict transform $E$ of the blow-up exceptional divisor $E'$ is the only exceptional divisor of $Y$ mapping to $0$, and such that $\mu$ is an isomorphism outside $D$. In this case, $E'=\bP^{n-1}$, $E^\circ=V$. By construction, the log resolution $\mu$ is the blow-up $Y'$ of $X$ at $0$, followed by the extension to $Y'$ of any fixed log resolution of $\bP(D)$ in $E'$, see \cite[1.4]{RV}. This is possible because $Y'$ is a line bundle over $E'$. Then, by Theorem \ref{thmSab},
$$
\zeta_0(\psi_F\bC_X)(t_1,\ldots,t_r)=(t_1^{d_1}\ldots t_r^{d_r}-1)^{-\chi(V)},
$$
and the conclusion follows.
\end{proof}
 
For $r=1$ one recovers a well-known formula for the monodromy zeta function of a homogeneous polynomial, see \cite[p.108]{Di}.

\subsection{ Local Alexander modules.} Let $x$ be a point in $D$ and $i_x:\{x\}\ra D$ the natural inclusion. Let ${\rm{Ball}}_x$ be a small open ball centered at $x$ in $X$, and let $$U_{F,x}={\rm{Ball}}_x\smallsetminus D.$$ 
Let $U_{F,x}^{ab}$ be the universal abelian cover of $U_{F,x}$, and let
$$
B=\bC[H_1(U_{F,x},\bZ)].
$$
Let
$$
\wti{U_{F,x}}:=U_{F,x}\times_{S^*}\wti{S^*}.
$$
Consider the commutative diagram of fibered squares
$$
\xymatrix{
{\rm{Ball}}_x \ar[d]_{F} & \ar@{_(->}[l]U_{F,x} \ar[d]  & \ar[l]_p \wti{U_{F,x}} \ar[d] \\
S  & \ar@{_(->}[l] S^* & \ar[l] \wti{S^*}.
}
$$
 
\begin{df}
The {\it $k$-th local homology Alexander  module} of $F$ at $x$ is the $A$-module
$$
H_{k}(\wti{U_{F,x}},\bC).
$$
The $k$-th local homology Alexander module of $U_{F,x}$ is the $B$-module 
$$
H_{k}(U_{F,x}^{ab},\bC).
$$
\end{df}

The following is essentially a particular case of \cite[2.2.5]{Sab}:

\begin{prop}\label{propStalk} The $k$-th cohomology of the stalk of the Sabbah specialization complex is isomorphic as an $A$-module, up to the switch between of the action of $t_j$ with that of $t_j^{-1}$, to the $k$-th local homology Alexander module of $F$ at $x$:
$$
H^k(i_x^*\psi_{F}\bC_X) 
\cong_A
H_{k}(\wti{U_{F,x}},\bC).
$$
\end{prop}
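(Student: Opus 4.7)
The plan is to reduce to a stalk computation via Lemma \ref{lemSimpl}(b), apply base change along the universal covering $p_S$, and then invoke the standard correspondence between local-system cohomology and (co)homology of covering spaces. The case $r=1$ is classical (and essentially Deligne's computation of stalks of $\psi_f$ as Milnor fiber cohomology); the multi-variable version is the content of \cite[2.2.5]{Sab}.

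First I would rewrite the stalk using Lemma \ref{lemSimpl}(b):
$$
i_x^*\psi_F\bC_X \;=\; (Rj_*\cL^F)_x.
$$
For a sufficiently small open ball $\mathrm{Ball}_x\subset X$ around $x$, the stalk of $Rj_*$ is computed by sections over $\mathrm{Ball}_x\cap U = U_{F,x}$:
$$
(Rj_*\cL^F)_x \;=\; R\Gamma\bigl(U_{F,x},\, \cL^F|_{U_{F,x}}\bigr).
$$
Here one uses that the open neighborhoods of $x$ form a fundamental system of balls so that cohomology of $\cL^F$ on the intersection with $U$ stabilizes (standard conic structure argument).

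Next I would run base change on the Cartesian square defining $\wti{U_{F,x}}$. Since $p_S$ is a local homeomorphism (covering map), the base change isomorphism $F^*\circ (p_S)_! \cong \wti p_!\circ \wti F^*$ applies and yields
$$
\cL^F|_{U_{F,x}} \;=\; (F|_{U_{F,x}})^*(p_S)_!\bC_{\wti{S^*}} \;=\; \wti p_!\bC_{\wti{U_{F,x}}},
$$
where $\wti p:\wti{U_{F,x}}\to U_{F,x}$ is the pullback cover. Thus the stalk becomes the cohomology of $U_{F,x}$ with coefficients in a rank-one local system of free $A$-modules whose monodromy is given by $\pi_1(U_{F,x})\to\pi_1(S^*)=\bZ^r\hookrightarrow A^\times$.

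The final step is to identify $R\Gamma(U_{F,x},\wti p_!\bC_{\wti{U_{F,x}}})$ with the Alexander module $H_\bullet(\wti{U_{F,x}},\bC)$ as an $A$-module. The $A$-action on $\wti p_!\bC_{\wti{U_{F,x}}}$ can be interpreted at the level of cellular chains: with respect to a CW structure on $U_{F,x}$ lifting to one on $\wti{U_{F,x}}$, the chain complex $C_\bullet(\wti{U_{F,x}},\bC)$ is a complex of free $A$-modules computing $H_\bullet(\wti{U_{F,x}},\bC)$, and $R\Gamma(U_{F,x},\wti p_!\bC_{\wti{U_{F,x}}})$ is computed by the quasi-isomorphic cochain complex obtained from the same cellular data but with the dual (contragredient) $A$-action. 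This is precisely the content of \cite[2.2.5]{Sab} once translated via Remark \ref{remOV}(1), and it produces the claimed $A$-module isomorphism where the contragredient action is recorded as the switch $t_j\leftrightarrow t_j^{-1}$.

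The main obstacle will be bookkeeping of $A$-actions: both the base-change step for the non-proper map $p_S$ (which is legitimate only because $p_S$ is a local homeomorphism) and the final chain-level identification depend on choices of left vs.\ right $\pi_1(S^*)$-actions on $A=\bC[\bZ^r]$. The asymmetry between the two sides of the stated isomorphism—cohomology on the left, homology on the right—forces the contragredient action, which is exactly the origin of the $t_j\leftrightarrow t_j^{-1}$ switch in the statement.
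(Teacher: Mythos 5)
Your steps through Lemma~\ref{lemSimpl} and the base-change identification $\cL^F|_{U_{F,x}} = \wti p_!\bC_{\wti{U_{F,x}}}$ agree with the paper (the paper phrases it as $H^{2n-k}_c(U_{F,x},\cL^F)=H^{2n-k}_c(\wti{U_{F,x}},\bC)$ via $Ra_!p_!$, but it is the same fact). Minor point: base change for $R(\cdot)_!$ holds for arbitrary cartesian squares, not only because $p_S$ is a local homeomorphism.

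The gap is in your final step, where you claim that $R\Gamma(U_{F,x},\wti p_!\bC_{\wti{U_{F,x}}})$ is ``computed by the quasi-isomorphic cochain complex obtained from the same cellular data but with the dual (contragredient) $A$-action,'' and that this directly yields $H_k(\wti{U_{F,x}},\bC)$ with the $t_j\leftrightarrow t_j^{-1}$ twist. This is not correct as a chain-level identification. If $C_\bullet=C_\bullet(\wti{U_{F,x}},\bC)$ is a finite complex of free $A$-modules, then the cellular cochain complex computing $H^k(U_{F,x},\wti p_!\bC)$ is the $A$-dual complex $\Hom_A(C_\bullet,A)$, not $C_\bullet$ with a twisted action. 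For $r>1$ the ring $A$ has global dimension $r>1$, so $H^k(\Hom_A(C_\bullet,A))$ differs from $H_k(C_\bullet)$ by nontrivial $\mathrm{Ext}$-terms; even for $r=1$ a universal-coefficients correction enters. The twist $t_j\leftrightarrow t_j^{-1}$ is not something that can be read off a cellular cochain complex in the way you describe. Also note that the identity $R\Gamma_c(U_{F,x},\wti p_!\bC_{\wti U})=R\Gamma_c(\wti U,\bC)$ is what holds cleanly, while the stalk of $Rj_*$ produces ordinary cohomology $R\Gamma$, not $R\Gamma_c$ — so a duality is unavoidable here.

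The paper's proof supplies exactly the missing dualities: Poincar\'e duality on the open $2n$-manifold $\wti{U_{F,x}}$ gives $H_k(\wti{U_{F,x}},\bC)\cong H^{2n-k}_c(\wti{U_{F,x}},\bC)$, which by the base-change step is $H^{2n-k}_c(U_{F,x},\cL^F)$; then a Verdier duality argument relates $H^\bullet_c(U_{F,x},\cL^F)$ to $H^\bullet(U_{F,x},(\cL^F)^\vee)$ (and hence to $H^k(i_x^*\psi_F\bC_X)=H^k(U_{F,x},\cL^F)$), and it is the appearance of the $A$-dual local system $(\cL^F)^\vee\cong\cL^F$-with-inverted-monodromy that accounts for the $t_j\leftrightarrow t_j^{-1}$ switch. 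Deferring to \cite[2.2.5]{Sab} is legitimate (the paper does so too), but the cellular-chain explanation you offer does not substitute for the Poincar\'e/Verdier duality argument and would break down already at the level of $A$-module isomorphism type when $r>1$.
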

\begin{proof} By Lemma \ref{lemSimpl}, $
H^k(i_x^*\psi_{F}\bC_X) = (R^kj_*\cL^F)_x
$. The stalk of the sheaf $R^kj_*\cL^F$ at $x$ equals the stalk of the presheaf $V\mapsto H^k(U\cap V,\cL^F)$. Hence 
\begin{equation}\label{eqStalk}
H^k(i_x^*\psi_{F}\bC_X) =H^k(U_{F,x},\cL^F) .
\end{equation}
We have 
\begin{align*}
H_{k}(\wti{U_{F,x}},\bC) &= H^{2n-k}_c(\wti{U_{F,x}},\bC) = H^{2n-k}(Ra_!p_!\bC_{\wti{U_{F,x}}}) \\
&=H^{2n-k}_c(U_{F,x},\cL^F)= H^{2n-k}(Ra_!\cL^F),
\end{align*}
where $a$ is the map to a point. On the other hand, letting $(\cL^F)^\vee$ be the $A$-dual local system of $\cL^F$, and $D_A$ be the Verdier duality functor, we have
\begin{align*}
H^{2n-k}(Ra_!\cL^F) & =H^{n-k}(Ra_!D_A((\cL^F)^\vee[n]))=H^{n-k}(D_ARa_*((\cL^F)^\vee)[n]) \\
& = D_AH^{k}(Ra_*(\cL^F)^\vee) = D_AH^k(U_{F,x},(\cL^F)^\vee).
\end{align*}
The last $A$-module is non-canonically isomorphic, after the change of $t_j$ with $t_j^{-1}$, with $H^k(U_{F,x},\cL^F)$.
\end{proof}

\begin{lem}\label{lemFact} If the polynomials $f_j$ with $f_j(x)=0$ define mutually distinct reduced and irreducible hypersurface germs at $x$, 
then $$\wti{U_{F,x}}=U_{F,x}^{ab}\quad\text{and}\quad
B=A/\id{t_j-1 \mid f_j(x)\ne 0}.
$$
\end{lem}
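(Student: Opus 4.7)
\medskip

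The plan is to unwind both identities in terms of fundamental groups and reduce everything to an explicit computation of $F_*$ on $H_1$. First I would relabel so that $f_1(x)=\cdots=f_s(x)=0$ and $f_{s+1}(x),\ldots,f_r(x)\ne 0$. For a small enough ${\rm{Ball}}_x$, the hypersurfaces $V(f_j)$ with $j>s$ do not meet it, so $U_{F,x}={\rm{Ball}}_x\smallsetminus\bigcup_{j\le s}V(f_j)$. By hypothesis the germs $V(f_1),\ldots,V(f_s)$ are mutually distinct, reduced, and irreducible, so Example \ref{exaB} applies and gives $H_1(U_{F,x},\bZ)\cong\bZ^s$, with basis the meridians $\gamma_1,\ldots,\gamma_s$ around each branch.

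The technical core is to evaluate $F_*\gamma_j$ componentwise through $F=(f_1,\ldots,f_r)$. I expect the three cases to behave as follows: (i) $f_j\circ\gamma_j$ wraps exactly once around $0\in\bC^*$, since $\gamma_j$ is a small meridian transverse to the smooth part of the \emph{reduced} branch $V(f_j)$; (ii) for $k\ne j$ with $k\le s$, the meridian $\gamma_j$ can be taken near a generic smooth point of $V(f_j)$, which is disjoint from the distinct irreducible germ $V(f_k)$, so $f_k\circ\gamma_j$ is nullhomotopic in $\bC^*$; (iii) for $k>s$, $f_k$ is nonvanishing on the ball and hence $f_k\circ\gamma_j$ is contractible. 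Together these give $F_*\gamma_j=e_j$, so $F_*\colon H_1(U_{F,x},\bZ)\to H_1(S^*,\bZ)$ is the standard inclusion $\bZ^s\hookrightarrow\bZ^r$ onto the coordinates indexed by the $j$'s with $f_j(x)=0$.

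The identity for $B$ is then immediate: $B=\bC[\bZ^s]$, and the ring map $A=\bC[\bZ^r]\to\bC[\bZ^s]=B$ induced on group algebras by $F_*$ sends $t_j\mapsto t_j$ for $j\le s$ and $t_j\mapsto 1$ for $j>s$, whose kernel is precisely $\id{t_j-1\mid j>s}=\id{t_j-1\mid f_j(x)\ne 0}$.

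For the covering-space identity $\wti{U_{F,x}}=U_{F,x}^{ab}$, I would argue Galois-theoretically. Under the standard dictionary between covering spaces and subgroups of $\pi_1$, the pullback $\wti{U_{F,x}}\to U_{F,x}$ corresponds to $\ker(F_*\colon\pi_1(U_{F,x})\to\pi_1(S^*))$, while $U_{F,x}^{ab}$ corresponds to $\ker(ab\colon\pi_1(U_{F,x})\to H_1(U_{F,x},\bZ))$. Since $\pi_1(S^*)=\bZ^r$ is abelian, $F_*$ on $\pi_1$ factors through $ab$; by the previous step, the resulting map on $H_1$ is an injection, so the two kernels coincide and the two covers are identified. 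The one subtlety worth flagging, and the only obstacle requiring care, is that when $s<r$ the fiber product $\wti{U_{F,x}}$ has $\bZ^{r-s}$ connected components, each individually isomorphic to the connected cover $U_{F,x}^{ab}$; the deck action of $\bZ^r$ permutes them through the quotient $\bZ^r/F_*(\bZ^s)$, and this permutation is exactly what gets washed out by the surjection $A\twoheadrightarrow B$, so at the level of the $A$-module $H_\bullet(\wti{U_{F,x}},\bC)$ used in Proposition \ref{propStalk} the stated identification goes through componentwise.
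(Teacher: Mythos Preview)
Your argument follows essentially the same route as the paper's proof: both identify $H_1(U_{F,x},\bZ)$ via Example~\ref{exaB}, compute $F_*$ on meridians in the three cases (own branch, other vanishing branch, non-vanishing $f_k$), deduce that $F_*$ on $H_1$ is the coordinate inclusion $\bZ^s\hookrightarrow\bZ^r$, and conclude via the Galois correspondence that the pullback cover corresponds to $\ker\bigl(F_*\colon\pi_1(U_{F,x})\to\pi_1(S^*)\bigr)=[\pi_1,\pi_1]$, since the target is abelian and the induced map on $H_1$ is injective.

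One small slip in your write-up: the surjection $A\to B$ is not ``induced on group algebras by $F_*$''. The map $F_*$ goes from $\bZ^s$ to $\bZ^r$, so on group algebras it induces $B\hookrightarrow A$, not a quotient. The identity $B=A/\id{t_j-1\mid f_j(x)\ne 0}$ is simply the abstract isomorphism $\bC[\bZ^r]/\id{t_j-1,\ j>s}\cong\bC[\bZ^s]$, which is immediate once you know $H_1(U_{F,x},\bZ)=\bZ^s$; this is how the paper reads it as well.

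Your final paragraph flags a genuine subtlety that the paper passes over. When $s<r$ the fiber product $\wti{U_{F,x}}=U_{F,x}\times_{S^*}\wti{S^*}$ is indeed disconnected, with $\pi_0\cong\bZ^{r-s}$ and each component canonically isomorphic to $U_{F,x}^{ab}$; the paper's equality $\wti{U_{F,x}}=U_{F,x}^{ab}$ is literally correct only when every $f_j$ vanishes at $x$, and otherwise should be read componentwise, consistent with the paper's own proof being phrased through the Galois correspondence for connected covers. Your remark that the extra $\bZ^{r-s}$ is what the passage from $A$ to $B$ absorbs is apt, though note that strictly it gives $H_*(\wti{U_{F,x}},\bC)\cong H_*(U_{F,x}^{ab},\bC)\otimes_B A$ (extension of scalars along the inclusion $B\hookrightarrow A$) rather than an $A$-action that literally factors through the quotient $B$; this distinction is worth keeping in mind when following the thread into Corollary~\ref{corFact} and Lemma~\ref{lemRed}.
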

\begin{proof} 
The second assertion follows from Example \ref{exaB}. For the first assertion, it is enough to show that $\wti{U_{F,x}}$ corresponds to the kernel of the abelianization map
$$
\pi_1(U_{F,x})\ra H_1(U_{F,x},\bZ).
$$
By definition,  $\wti{U_{F,x}}$ is given by the kernel of the composition
$$
\pi_1(U_{F,x})\mathop{\lra}^{F_*}\pi_1(S^* )=\bZ^r\mathop{\lra}^{id}H_1(S^*,\bZ)=\bZ^r. 
$$
Since the codomain is abelian, it is enough to show that the natural direct image
$$
F_*=((f_1)_*,\ldots , (f_r)_*): H_1(U_{F,x},\bZ) \lra H_1(S^*,\bZ)
$$
is injective. By Example \ref{exaB}, $H_1(U_{F,x},\bZ)$ is free abelian generated by the classes of loops $\gamma_j$ centered a general point of ${\rm{Ball}}_x\cap D_j$ for those $j$ such that $f_j(x)= 0$. Let $\delta_j$ be a generator for the first homology of the $j$-th copy of $\bC^*$ in $S^*$. Both assertions of the Lemma follow then from the fact that
$$
f_j(\gamma_{j'})\sim\left\{
\begin{array}{ll}
0 & \text{ if } f_j(x)\ne 0,\text{ or }j\ne j',\\
\delta_j &\text{ if }f_j(x)=0\text{ and }j=j'.
\end{array}
\right.
$$
Indeed, if $f_j(x)\ne 0$, then $\gamma_{j'}$ can be chosen such that $f_j(\gamma_{j'})$ is a loop homologically equivalent to 0. If $j\ne j'$, then $\gamma_{j'}$ can be chosen such that $f_j(\gamma_{j'})$ is a point. If $f_j(x)=0$ and $j=j'$, then $\gamma_j$ can be chosen to intersect at most once every fiber of $f_j$, hence $f_j(\gamma_j)$ is homologically equivalent to $\delta_j$.
\end{proof}

From Proposition \ref{propStalk} and Lemma \ref{lemFact} we obtain:

\begin{cor}\label{corFact}
If the polynomials $f_j$ with $f_j(x)=0$ define mutually distinct reduced and irreducible hypersurface germs at $x$, then the $k$-th cohomology of the stalk of $\psi_F\bC_X$ at $x$ is the $k$-th local homology Alexander module of $U_{F,x}$. More precisely, the action of $A$ on $H^k(i_x^*\psi_{F}\bC_X)$ factors through the action of $B$, and 
$$
H^k(i_x^*\psi_{F}\bC_X)\cong H_{k}(U_{F,x}^{ab},\bC)
$$
as $B$-modules after replacing on the right-hand side the $t_j$-action with the $t_j^{-1}$-action.
\end{cor}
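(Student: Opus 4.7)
The plan is to derive the corollary as a direct consequence of Proposition \ref{propStalk} and Lemma \ref{lemFact}, the only genuine task being to check that the residual $B$-module structure is well-defined and matches on both sides.

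First, I would invoke Proposition \ref{propStalk} to obtain the $A$-module isomorphism
$$
H^k(i_x^*\psi_F\bC_X) \;\cong_A\; H_k(\wti{U_{F,x}},\bC),
$$
valid up to the exchange of $t_j$ with $t_j^{-1}$ on the right-hand side. Next, under the hypothesis that the $f_j$ with $f_j(x)=0$ cut out mutually distinct reduced and irreducible germs at $x$, Lemma \ref{lemFact} identifies $\wti{U_{F,x}}$ with the universal abelian cover $U_{F,x}^{ab}$; therefore
$$
H_k(\wti{U_{F,x}},\bC) = H_k(U_{F,x}^{ab},\bC)
$$
as vector spaces, with the induced deck-transformation action of $H_1(U_{F,x},\bZ)$ agreeing on both sides by functoriality of the universal abelian cover construction.

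The second assertion of Lemma \ref{lemFact} then states that $B = A/\langle t_j - 1 \mid f_j(x)\neq 0\rangle$, so the natural $B$-module structure on $H_k(U_{F,x}^{ab},\bC)$ is the same as the $A$-module structure in which the elements $t_j-1$ with $f_j(x)\neq 0$ act as zero. To see that the $A$-action on the left-hand side factors through this $B$, I would observe that, by the construction in Section \ref{subsSSP}, the $t_j$-action is induced by the covering transformation corresponding to the $j$-th standard generator of $\pi_1(S^*)=\bZ^r$ pulled back along $F$; but the proof of Lemma \ref{lemFact} shows that for $j$ with $f_j(x)\neq 0$, the image of the generator $\gamma_{j'}\in\pi_1(U_{F,x})$ under $F_*$ is trivial in the $j$-th factor, so the corresponding covering transformation on $\wti{U_{F,x}}$ is the identity. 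Hence $t_j-1$ acts as zero on $H_k(\wti{U_{F,x}},\bC)$ for such $j$, showing that the action factors through $B$.

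There is no real obstacle here: the statement is essentially a repackaging of the preceding proposition and lemma, the only bookkeeping point being the $t_j \leftrightarrow t_j^{-1}$ swap, which is inherited verbatim from Proposition \ref{propStalk} and needs only to be carried along to the final statement.
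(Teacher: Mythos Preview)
Your proposal is correct and matches the paper's own proof, which consists of the single line ``From Proposition~\ref{propStalk} and Lemma~\ref{lemFact} we obtain.'' Your extra paragraph justifying that the $A$-action factors through $B$ is not strictly needed and is slightly off in its logic (that $F_*(\gamma_{j'})$ has trivial $j$-th component does not by itself show that the $j$-th deck transformation on $\wti{U_{F,x}}$ is the identity); the factoring is immediate once Lemma~\ref{lemFact} identifies $\wti{U_{F,x}}$ with $U_{F,x}^{ab}$, since the deck group of the latter is $H_1(U_{F,x},\bZ)$, whose group ring is exactly $B$.
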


\begin{rem}\label{remSd} 
In the case when one the polynomials $f_r$ is nonsingular outside $\cup_{j=1}^{r-1}D_j$, or, when $f_r$ is a generic linear polynomial through $x$, more information is available about the local Alexander modules from \cite[2.6.3 and 2.6.4]{Sab}.
\end{rem}

\subs\label{subsPfT4}
{\bf Proof of Theorem \ref{thm2}.} In this case, 
$$L(U_{F,x})=\Spec B=\bigcap_{j : f_j(x)\ne 0}V(t_j-1) \subset S^*$$
by Example \ref{exaB}.  Corollary \ref{corFact} implies that $\Supp_x(\psi_F\bC_X)$ equals $\wti{\cV}(U_{F,x})$ via taking reciprocals coordinate-wise due to the change in action of $t_j$ by $t_j^{-1}$. By Corollary \ref{corChTp}, $\wti{\cV}(U_{F,x})$ equals $\cV(U_{F,x})$ via taking reciprocals coordinate-wise. Hence, $\Supp_x(\psi_F\bC_X)=\cV(U_{F,x})$. $\Box$

\subsection{ Uniform support.}\label{subsUS}
Even if the polynomials $f_j$ vanishing at $x$ do not define mutually distinct reduced and irreducible hypersurface germs at $x$,   the proof  of Lemma \ref{lemFact}  together with Proposition \ref{propStalk} show:
\begin{lem}\label{lemRed}
The action of $A$ on $H^k(i_x^*\psi_{F}\bC_X)$ factors through the action of 
$$
A/(\id{t_j-1\mid f_j(x)\ne 0}).
$$
\end{lem}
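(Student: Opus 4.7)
The plan is to combine Proposition~\ref{propStalk} with the geometric fact already isolated in the proof of Lemma~\ref{lemFact}. By the identification $H^k(i_x^*\psi_F\bC_X) = H^k(U_{F,x},\cL^F)$ recorded in equation~(\ref{eqStalk}) of the proof of Proposition~\ref{propStalk}, the $A$-module structure on the stalk is the one induced from the sheaf-theoretic $A$-module structure on $\cL^F = F_U^*\cL$. This structure is in turn governed by the monodromy representation
$$\rho\colon\pi_1(U_{F,x})\xrightarrow{F_*}\pi_1(S^*)=\bZ^r\longrightarrow A^*,\qquad \bn\mapsto t^\bn.$$

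The main step is to show, by shrinking the ball, that $\rho$ takes values in the subgroup $B^*\subset A^*$, where $B := A/\id{t_j-1\mid f_j(x)\ne 0}$. For this I would repeat verbatim the argument in the proof of Lemma~\ref{lemFact}: for every $j$ with $f_j(x)\ne 0$ shrink ${\rm Ball}_x$ so that $f_j$ is nowhere vanishing on it (this can be done once for all such $j$ since there are only finitely many). Then $f_j$ sends ${\rm Ball}_x$ into a small disk around $f_j(x)$ in $\bC^*$ avoiding the origin, which is simply connected, so every loop $\gamma$ in $U_{F,x}$ is mapped by $f_j$ to a null-homotopic loop in $\bC^*$. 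Thus the $j$-th component of $F_*$ vanishes for every such $j$, and the image of $F_*$ is contained in $\bigoplus_{j' : f_{j'}(x)=0}\bZ e_{j'}\subset\bZ^r$, so $\rho$ lands in $B^*$ as claimed.

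To conclude, since both the $A$-module structure on the stalks of $\cL^F$ and the monodromy representation of $\cL^F|_{U_{F,x}}$ factor through $B\subset A$, the local system $\cL^F|_{U_{F,x}}$ canonically descends to a rank-one $B$-module local system. Consequently $H^k(U_{F,x},\cL^F)$ inherits a natural $B$-module structure through which the ambient $A$-action factors via the quotient $A\twoheadrightarrow B$, which is the desired statement. The main subtlety is checking that the $B$-module structure thus obtained is really compatible with the $A$-module structure used in the statement of the lemma, but this is automatic since both actions are given stalk-wise by multiplication in the respective ring and these multiplications agree on the sub-ring $B\subset A$.
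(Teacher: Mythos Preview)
Your proposal is correct and follows essentially the same approach as the paper, which simply states that the lemma follows from ``the proof of Lemma~\ref{lemFact} together with Proposition~\ref{propStalk}''; you have written out precisely what this means. One minor remark: you alternate between viewing $B$ as a subring of $A$ and as a quotient of $A$, which is harmless here since the natural splitting $A\cong B\otimes_\bC\bC[t_j^{\pm 1}\mid f_j(x)\ne 0]$ identifies the two, but it would read more cleanly to pick one perspective and stick with it.
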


As a consequence, for a point $x\in D$, the support $\Supp_x(\psi_F\bC_X)$ lies in a sub-torus of $S^*$ of codimension exactly the number of polynomials $f_j$ with $f_j(x)\ne 0$. More precisely, let
$$
\bT_x:=  V(\id{t_j-1\mid f_j(x)\ne 0}) \subset S^*.
$$
Then
$$
\Supp_x(\psi_F\bC_X) \subset \bT_x.
$$
Let  $r_x$ be the codimension of $\bT_x$ in $S^*$, in other words the number of $j$'s with $f_j(x)\ne 0$.

\begin{df} The $F$-natural splitting of $S^*$ at $x$ is the splitting
$$S^*=\bT_x\times (\bC^*)^{r_x}$$
compatible with the splitting
$$
\{j \mid f_j(x)=0\} \cup \{j\mid f_j(x)\ne 0\}.
$$
\end{df}

\begin{df}\label{dfUS}
The {\it uniform support} at $x$ of $\psi_F\bC_X$ is
$$
\Supp^{unif}_x(\psi_F\bC_X) : =(\Supp_x(\psi_F\bC_X))\times (\bC^*)^{r_x} \subset S^*,
$$
the last inclusion being induced by the $F$-natural splitting of $S^*$.  By definition, the uniform support coincides with the usual support when $\bT_x$ is empty, or in other words, when all $f_j$ vanish at $x$.
\end{df}
 
In other words, the uniform support is defined by the same equations as the usual support  except we discard the equations $t_j=1$ for those $j$ such that the hypersurface $f_j$ does not pass through $x$.

Similarly, consider the cohomology support locus $\cV(U_{F,x})$. This is a subvariety of the space of rank one local systems $L(U_{F,x})$, and $$L(U_{F,x})\subset \bT_x,$$  with $\bT_x$ as above. We have an equality $L(U_{F,x})=\bT_x$ if the germs $f_j$ which vanish at $x$ define mutually distinct reduced and irreducible hypersurface germs. 

\begin{df}\label{dfUC}
The {\it uniform cohomology support locus} of $F$ at $x$ is
$$
\cV^{unif}(U_{F,x}) :=  \cV(U_{F,x})\times (\bC^*)^{r_x} \subset S^*,
$$
the last inclusion being induced by the $F$-natural splitting of $S^*$.  By definition, the uniform cohomology support locus coincides with the usual cohomology support locus when $\bT_x$ is empty, or in other words, when all $f_j$ vanish at $x$.
\end{df}

\subsection {Proof of Theorem \ref{thm3}.} It follows from Theorem \ref{thm2} and Theorem \ref{thmConjIn} which we prove latter. $\Box$

\subsection{ Homogeneous polynomials.} 
Assume now that $f_j$ are non-zero homogeneous polynomials for all $j$. We show first that $i_0^*\psi_{F}\bC_X$ recovers the {\it universal Alexander modules} of Dimca-Maxim \cite[\S 5]{DM}. Let $$V=\bP^{n-1}\smallsetminus \bigcup_{j=1}^r\bP(D_j).$$ Let $d_j$ be the degree of $f_j$. Let $V^{ab}$ be the universal abelian cover of $V$. Then $H_k(V^{ab},\bC)$ admits an action of 
$$B:=\bC[H_1(V,\bZ)].$$ If $f_j$ are mutually distinct irreducible homogeneous polynomials, which is the situation considered in \cite{DM}, $$B=A/\id{t_1^{d_1}\cdots t_r^{d_r}-1},$$
and
$$
L(V)=V(t_1^{d_1}\cdots t_r^{d_r}-1)\subset S^*,
$$
see Example \ref{exaV}.

\begin{prop}\label{propDM}
If $f_j$ are  irreducible homogeneous polynomials of degree $d_j$ defining mutually distinct hypersurfaces with $gcd(d_1,\ldots ,d_r)=1$, then the action of $A$ on $i_0^*\psi_{F}\bC_X$ factorizes through $B$, and 
$$
H^k(i_0^*\psi_{F}\bC_X) \cong H_k(V^{ab},\bC)
$$
as $B$-modules after replacing on the right-hand side the $t_j$-action with the $t_j^{-1}$-action.
\end{prop}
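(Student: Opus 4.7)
The plan is to identify $H^k(i_0^*\psi_F\bC_X)$ with $H_k(V^{ab},\bC)$ by exhibiting a $\bC$-bundle structure that is a homotopy equivalence. The starting point is Proposition \ref{propStalk}, which provides an $A$-module isomorphism
\[H^k(i_0^*\psi_F\bC_X) \cong H_k(\wti{U_{F,0}},\bC)\]
up to the involution $t_j \leftrightarrow t_j^{-1}$, so everything reduces to computing the multi-variable local Alexander module on the right in the homogeneous setting.

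Set $W := \bC^n \setminus D$. Since each $f_j$ is homogeneous, $D$ is a cone through the origin, so the inclusion $U_{F,0} \hookrightarrow W$ is a homotopy equivalence compatible with $F$. Because the $D_j$ are mutually distinct irreducible hypersurfaces and each $f_j$ vanishes with order one along $D_j$, the meridian $\gamma_j \in H_1(W)$ is sent by $F_*$ to the $j$-th standard generator of $H_1(S^*) = \bZ^r$. Thus $F_* \colon H_1(W) \to H_1(S^*)$ is the identity on $\bZ^r$, and the pullback $\wti{U_{F,0}} = U_{F,0} \times_{S^*} \wti{S^*}$ is homotopy equivalent to the universal abelian cover $W^{ab}$ of $W$.

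The next step is to exploit the scalar $\bC^*$-action. Since $0 \in D$, this action is free on $W$, making $W \to V$ a principal $\bC^*$-bundle. The fiber loop $\gamma_0 \in \pi_1(W)$ maps to $(d_1,\ldots,d_r) \in H_1(W)$, since $f_j(e^{2\pi i t}x) = e^{2\pi i t d_j}f_j(x)$. Pulling this bundle back along $V^{ab} \to V$ produces a principal $\bC^*$-bundle $W \times_V V^{ab} \to V^{ab}$ whose fundamental group sits inside $\pi_1(W)$ as $[\pi_1(W),\pi_1(W)] \cdot \langle \gamma_0\rangle$. Two properties of the inclusion $\pi_1(W^{ab}) = [\pi_1(W),\pi_1(W)] \subset [\pi_1(W),\pi_1(W)] \cdot \langle \gamma_0\rangle$ then identify $W^{ab} \to V^{ab}$ as the fiberwise exponential cover of this $\bC^*$-bundle: (i) $\pi_1(W^{ab}) \cap \langle \gamma_0\rangle = 1$, because $\gamma_0^m \in [\pi_1(W),\pi_1(W)]$ would force $m(d_1,\ldots,d_r) = 0$ in $H_1(W)$, hence $m = 0$; (ii) $\pi_1(W^{ab}) \to \pi_1(V^{ab})$ is surjective because $\pi_1(W) \to \pi_1(V)$ is. Consequently $W^{ab} \to V^{ab}$ is a principal $\bC$-bundle, and contractibility of the fiber gives a homotopy equivalence $W^{ab} \simeq V^{ab}$.

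Chaining the isomorphisms yields $H^k(i_0^*\psi_F\bC_X) \cong H_k(V^{ab},\bC)$ as abelian groups. For the module structure, the deck transformation of $W^{ab}$ by $(d_1,\ldots,d_r) \in H_1(W)$ is by construction translation by $1 \in \bZ$ along the $\bC$-fiber, hence descends to the identity on $V^{ab}$. Since $\bZ(d_1,\ldots,d_r)$ is exactly the kernel of $H_1(W) \to H_1(V)$, the $A$-action factors through $B = A/\langle t_1^{d_1}\cdots t_r^{d_r} - 1\rangle$ and agrees there with the usual deck action of $H_1(V)$ on $H_k(V^{ab},\bC)$; the swap $t_j \leftrightarrow t_j^{-1}$ is inherited from Proposition \ref{propStalk}. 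The hypothesis $\gcd(d_1,\ldots,d_r) = 1$ is used only to ensure that $H_1(V)$ is torsion-free, matching the Dimca-Maxim framework. I expect the main obstacle to be the pair of $\pi_1$-calculations in step three, which ensure that $W^{ab}$ is precisely the $\bC$-bundle over $V^{ab}$ rather than some intermediate cover.
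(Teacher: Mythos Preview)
Your proof is correct and reaches the same conclusion, but by a genuinely different topological route. The paper's argument uses the hypothesis $\gcd(d_1,\ldots,d_r)=1$ in an essential way: since $\Pic(V)$ is then trivial, the $\bC^*$-bundle $U_{F,0}\to V$ is topologically a product $\bC^*\times V$; a choice of section $\sigma:V\to U_{F,0}$ and the K\"unneth decomposition of $H_1$ then give $U_{F,0}^{ab}\approx \bC\times V^{ab}$ directly. You bypass the trivialization entirely, working instead with the (possibly nontrivial) principal $\bC^*$-bundle $W\to V$ and the covering-space lattice inside $\pi_1(W)$: the two conditions (i) $[\pi_1(W),\pi_1(W)]\cap\langle\gamma_0\rangle=1$ and (ii) surjectivity onto $\pi_1(V^{ab})$ pin down $W^{ab}$ as the fiberwise $\bC\to\bC^*$ cover over $V^{ab}$. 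Your observation that the $\gcd$ hypothesis is not actually needed for the homotopy equivalence $W^{ab}\simeq V^{ab}$ is a real gain---only the identification $B=A/\langle t_1^{d_1}\cdots t_r^{d_r}-1\rangle$ (equivalently, torsion-freeness of $H_1(V,\bZ)$) uses it, and even that is cosmetic if one is willing to write $B=\bC[H_1(V,\bZ)]$. The paper's argument is shorter once the bundle is trivialized, but yours is more intrinsic and slightly more general.
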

\begin{proof} Consider $U_{F,0}$, the complement in a small open ball at the origin of $D$. The natural projectivization map $U_{F,0}\ra V$ has fibers diffeomorphic with $\bC^*$ and is a deformation retract of the restriction of the tautological line bundle of $\bP^{n-1}$ to $V$. Since $gcd(d_1,\ldots ,d_r)=1$, the Picard group of $V$ is trivial. Hence, topologically, 
$$U_{F,0}\approx \bC^*\times V.$$ Fix a section $\sigma:V\ra U_{F,0}$. First, we show that via this section
$$
V^{ab}\approx V\times_{U_{F,0}} U_{F,0}^{ab}.
$$
The cover on the right-hand side is given by the kernel of  the composition
$$
\pi_1(V)\mathop{\lra}^{\sigma_*}\pi_1(U_{F,0})\mathop{\lra}^{ab} H_1(U_{F,0},\bZ).
$$
The cover on the left-hand side is given by the kernel of
$$
\pi_1(V)\mathop{\lra}^{ab} H_1(V,\bZ).
$$
Hence, it is enough to show that the map
$$
\sigma_*:H_1(V,\bZ)\lra H_1(U_{F,0},\bZ)
$$
is injective. By assumption, both groups are free abelian of rank $r-1$ and, respectively, $r$. Hence $\sigma_*$ is compatible with the K\"{u}nneth decomposition
$$
H_1(U_{F,0},\bZ)=H_1(V,\bZ)\oplus \bZ,
$$
by the naturality of the K\"{u}nneth decomposition via cross products, and the injectivity follows. This also shows that $A$ acts on $H_k(V^{ab},\bC)$ via the surjection
$$
A=\bC[H_1(U_{F,0},\bZ)] \lra B=\bC[H_1(V,\bZ)]=A/\id{t_1^{d_1}\ldots t_r^{d_r}-1}
$$
induced by $\sigma_*$. 

Now the Proposition follows from Corollary \ref{corFact} and the fact that $V^{ab}$ is a deformation retract of
$$U_{F,0}^{ab}\approx (\bC^*)^{ab}\times V^{ab}\approx\bC\times V^{ab}.$$
\end{proof}

\begin{prop}\label{propSuppHom}
If $f_j$ are  irreducible homogeneous polynomials of degree $d_j$ defining mutually distinct hypersurfaces with $gcd(d_1,\ldots ,d_r)=1$, then, in $S^*$:
$$\Supp _0(\psi_F\bC_X) =\cV(V) \subset L(V)=V(t_1^{d_1}\cdots t_r^{d_r}-1).$$
The four sets are equal if, in addition, $\chi(V)\ne 0$.
\end{prop}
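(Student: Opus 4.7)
The plan is to combine Proposition \ref{propDM}, Corollary \ref{corChTp}, and Corollary \ref{corZH}, while carefully bookkeeping the coordinate-wise reciprocal automorphism of $S^*$ that appears twice.

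First I would establish the chain
$$\Supp_0(\psi_F\bC_X) \;=\; \cV(V) \;\subset\; L(V) \;=\; V(t_1^{d_1}\cdots t_r^{d_r}-1).$$
The outer identification $L(V)=V(t_1^{d_1}\cdots t_r^{d_r}-1)$ is Example \ref{exaV}. By Proposition \ref{propDM}, the $A$-action on each $H^k(i_0^*\psi_F\bC_X)$ factors through $B=A/\id{t_1^{d_1}\cdots t_r^{d_r}-1}$, so the support of each cohomology sheaf lies in $L(V)$, which yields the middle containment once the equality on the left is proved. For that equality, the same proposition identifies $H^k(i_0^*\psi_F\bC_X)$ with $H_k(V^{ab},\bC)$ as $B$-modules after interchanging the action of each $t_j$ with that of $t_j^{-1}$. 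Writing $\iota:S^*\to S^*$ for coordinate-wise inversion, this means
$$\Supp_0(\psi_F\bC_X) \;=\; \iota(\wti{\cV}(V)),$$
where $\wti{\cV}(V)=\bigcup_k\Supp(H_k(V^{ab},\bC))$ is the homological Alexander support of Definition \ref{dfcv}. On the other hand, Corollary \ref{corChTp} expresses $\cV(V)$ as $\iota(\wti{\cV}(V))$ as well, so the two sets coincide.

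Second, for the strengthening to equality of all four sets when $\chi(V)\neq 0$: Corollary \ref{corZH} supplies the reverse inclusion $V(t_1^{d_1}\cdots t_r^{d_r}-1)\subset \Supp_0(\psi_F\bC_X)$ via the multi-variable monodromy zeta function formula of Theorem \ref{thmSab}. Combined with the chain above, all four sets coincide.

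The only delicate point is the double appearance of $\iota$: once from the passage between Sabbah specialization and local homology Alexander modules in Proposition \ref{propStalk} (inherited by Proposition \ref{propDM}), and once from the standard duality $H^k(V,\cL^{-1})\cong H_k(V,\cL)^\vee$ underlying Corollary \ref{corChTp}. Since both reciprocal swaps are the same involution of the torus $S^*$, they compose to the identity on subsets (as $\iota$ is an involution) and no further homological input is needed beyond what is already available in the text.
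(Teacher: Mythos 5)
Your proposal is correct and follows exactly the same route the paper takes, namely chaining Proposition \ref{propDM} with Corollary \ref{corChTp} for the first equality and invoking Corollary \ref{corZH} for the reverse inclusion when $\chi(V)\neq 0$. The careful bookkeeping of the coordinate-wise inversion $\iota$ is a welcome expansion, since the paper's proof is terse on this point.
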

\begin{proof}
The first equality is new. It follows from Proposition \ref{propDM} and Corollary \ref{corChTp}. If $\chi(V)\ne 0$, the equality follows from Corollary \ref{corZH}.
\end{proof}

\begin{rem}
In the case when one of the homogeneous polynomials is a generic linear form, more information is available about $\cV(V)$, see \cite[Theorem 3.6]{DM} and see also Remark \ref{remSd}.
\end{rem}

For a point $y\in X=\bC^n$ different than the origin, let $[y]$ denote the point in $\bP^{n-1}$ with homogeneous coordinates given by $y$. We denote by
$$
U_{F,[y]}\subset \bP^{n-1}
$$
the complement in a small ball around $[y]$ of $\bP(D)$. Note that if we consider an affine space neighborhood of $[y]$,
$$[y]\in\bA^{n-1}\subset\bP^{n-1},$$
then
$$
U_{F,[y]}=U_{F_{|\bA^{n-1}},[y]}.
$$
There is a homotopy equivalence
$$U_{F,y}\approx_{ht} U_{F,[y]}.$$  
Hence 
$$
\cV(U_{F,y}) =\cV(U_{F,[y]})=\cV(U_{F_{|\bA^{n-1}},[y]}) \subset \bT_y,
$$
with $\bT_y$ as in \ref{subsUS}. Moreover,  
$$
\cV^{unif}(U_{F,y}) =\cV^{unif}(U_{F_{|\bA^{n-1}},[y]})\subset S^*
$$
because the $F$-natural splitting $S^*=T_y\times (\bC^*)^{r_y}$ is the same as the $F_{|\bA^{n-1}}$-natural splitting. We define:
$$
\cV^{unif}(U_{F,[y]}):=\cV^{unif}(U_{F,y}) =\cV^{unif}(U_{F_{|\bA^{n-1}},[y]}).
$$
From this discussion together with Proposition \ref{propSuppHom}, we obtain the following  computation reduction to a lower-dimensional, possibly non-homogeneous, case:

\begin{prop}\label{propFR}  If the polynomials $f_j$ are homogeneous of degree $d_j$ and define mutually distinct reduced and irreducible hypersurfaces, and if $gcd(d_1,\ldots, d_j)=1$, then
$$
\bigcup_{y\in D}\Supp_y^{unif}\psi_F\bC_X= \bigcup_{y\in D}\cV^{unif}(U_{F,y})=\cV(V)\cup\bigcup_{[y]\in\bP^{n-1}}\cV^{unif}(U_{F,[y]}).
$$
If, in addition,  $\chi(V)\ne 0$, then $\cV(V)=L(V)=V(t_1^{d_1}\cdots t_r^{d_r}-1)$. 
\end{prop}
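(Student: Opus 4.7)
The argument assembles Theorem \ref{thm2}, Proposition \ref{propSuppHom}, and the homotopy-theoretic discussion immediately preceding the proposition. For the first equality, I would check it pointwise: at each $y\in D$, the global assumption (mutually distinct, reduced, irreducible hypersurfaces) yields the local hypothesis of Theorem \ref{thm2} at $y$, so $\Supp_y(\psi_F\bC_X) = \cV(U_{F,y})$; since both the support and the cohomology support locus are promoted to their uniform versions by the identical recipe $(-)\times(\bC^*)^{r_y}$ under the $F$-natural splitting $S^*=\bT_y\times(\bC^*)^{r_y}$, equality of the non-uniform versions at $y$ forces equality of the uniform ones, and taking the union over $y\in D$ gives the first equality.

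For the second equality, split the union according to whether $y=0$ or $y\ne 0$. At $y=0$, homogeneity forces $f_j(0)=0$ for every $j$, so $\bT_0=S^*$, $r_0=0$, and therefore $\cV^{unif}(U_{F,0})=\cV(U_{F,0})$. Combining the first assertion of Proposition \ref{propSuppHom} with Theorem \ref{thm2}, this common set equals $\cV(V)$, which accounts for the first summand on the right. For $y\in D\smallsetminus\{0\}$, the discussion preceding the proposition — based on the homotopy equivalence $U_{F,y}\approx_{ht}U_{F,[y]}$ arising from the tautological $\bC^*$-bundle structure of $U_{F,0}$ over $V$, together with the coincidence of the $F$-natural splitting and the $F_{|\bA^{n-1}}$-natural splitting — supplies $\cV^{unif}(U_{F,y})=\cV^{unif}(U_{F,[y]})$. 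Taking the union over $y\in D\smallsetminus\{0\}$ produces $\bigcup_{[y]\in\bP(D)}\cV^{unif}(U_{F,[y]})$, and extending the index set to $\bP^{n-1}$ as written contributes nothing new beyond this (points $[y]\notin\bP(D)$ correspond to a contractible $U_{F,[y]}$, carrying only the trivial local system).

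The closing parenthetical is immediate: under $\chi(V)\ne 0$, the second assertion of Proposition \ref{propSuppHom} gives $\cV(V)=V(t_1^{d_1}\cdots t_r^{d_r}-1)$, and Example \ref{exaV} identifies this torus with $L(V)$. The main — and essentially only — obstacle is careful bookkeeping: verifying that the $F$-natural splittings at $y\in\bC^n\smallsetminus\{0\}$ and at $[y]\in\bP^{n-1}$ really do agree coordinate by coordinate so that the two uniform constructions coincide as subsets of $S^*$, which is precisely what is spelled out in the paragraph just before the proposition.
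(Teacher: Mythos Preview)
Your proposal is correct and follows essentially the same route as the paper, which simply records that the proposition follows ``from this discussion together with Proposition \ref{propSuppHom}'': you invoke Theorem \ref{thm2} pointwise for the first equality, split the second equality into the $y=0$ contribution (handled by Proposition \ref{propSuppHom}) and the $y\ne 0$ contribution (handled by the homotopy equivalence $U_{F,y}\approx_{ht}U_{F,[y]}$ and the agreement of the two natural splittings), and read off the final sentence from Proposition \ref{propSuppHom}. The only slightly delicate spot is your parenthetical about $[y]\notin\bP(D)$: since $\cV^{unif}$ is really only defined for points of $D$, these terms should be read as vacuous rather than as literally computing a uniform support of a contractible ball, but this is exactly the tacit convention the paper uses as well.
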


 \subsection{Proof of Corollary \ref{cor1}.} It follows from Theorem \ref{thmConjIn}, which we will prove later, and Proposition \ref{propSuppHom}. $\Box$

\subsection{ Specialization of polynomial maps.}\label{subSpP} Next, we address the question of what happens with the support of $\psi_F\bC_X$ under specialization of the map $F$ in the following sense:

\begin{df}\label{dfSpP} (a) We say that $F=(f_1,\ldots ,f_r)$ {\it specializes to} $G=(g_1,\ldots ,g_{p})$, if $f_j$ and $g_{k}$ are non-zero polynomials in $\bC[x_1,\ldots ,x_n]$, $r\ge p\ge 1$, and $G$ is the composition $F'\circ F$ where 
$$
F'=(f'_1,\ldots ,f'_{p}):\bC^r\lra \bC^{p}
$$
is such that $f'_{k}$ are monomial maps and the induced map on tori $(\bC^*)^r\ra (\bC^*)^{p}$ is surjective. We will also write $$G=F^M$$ where the matrix $M=(m_{kj})$ in $\bN^{p\times r}$ is obtained from writing 
$$
f'_{k} : (\tau_1,\ldots ,\tau_r)\mapsto \tau_1^{m_{k1}}\ldots \tau_r^{m_{kr}}.
$$

(b) We say that $F^M$ is a non-degenerate specialization of $F$ if, in addition, $\sum_{k=1}^p m_{kj}\ne 0$ for all $j$ such that $f_j$ is non-constant.
\end{df}

The condition in (b) guarantees that no non-constant $f_j$ is lost during the specialization.

We will use the notation:
\begin{align*}
S=\bC^r,\quad S_M=\bC^{p},\quad S^*=(\bC^*)^r,\quad S_M^{*}=(\bC^{*})^{p}.
\end{align*}
There is a natural identification of $S^*$ with $L(S^*)$, a tuple of non-zero complex numbers describing the monodromy of a rank one local  system around the coordinate axes. The pull-back of local systems defines a map
$$
\phi_M:S_M^*=L(S_M^*)\lra S^*=L(S^*)
$$
given by
$(\lam_1,\ldots,\lam_p) \mapsto (\lam_1^{m_{11}}\cdots\lam_p^{m_{p1}},\ldots ,\lam_1^{m_{1r}}\cdots\lam_p^{m_{pr}}).$
Denote by
$$A=\bC[t_1,t_1^{-1},\ldots ,t_r,t_{r}^{-1}],$$
$$A_M=\bC[u_1,u_1 ^{-1},\ldots ,u_{p},u_{p}^{-1}]$$
the coordinate rings of $S^*$ and $S^*_M$, respectively. Then $\phi_M$ corresponds to the ring morphism 
$$
\phi^\#_M:A  \lra A_M$$
mapping $t_j$ to $\prod_{k}u_{k}^{m_{kj}}$.

 The following is a consequence of \cite[2.3.8]{Sab}:

\begin{prop}\label{propSuppSp} If $G=F^M$ is a non-degenerate specialization of the polynomial map $F$, then  for all $x$
$$
\phi^{-1}_M(\Supp_x^{unif}(\psi_F\bC_X)) = \Supp_x^{unif}(\psi_G\bC_X).
$$
\end{prop}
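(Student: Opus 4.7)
The plan is to invoke Sabbah's base change formula \cite[2.3.8]{Sab} as the main input, and to promote its content from ordinary supports to uniform supports via an elementary compatibility of natural retractions. I will assume the reader accepts \cite[2.3.8]{Sab} in the form of a natural identification of $\psi_G\bC_X$ with the pullback of $\psi_F\bC_X$ along the monomial map $\phi_M$; the uniformization step is what turns this into the statement on uniform supports.

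First, I would decode the two sides. By Lemma \ref{lemRed}, the $A$-action on the stalks of $\psi_F\bC_X$ at $x$ factors through the quotient $A \twoheadrightarrow \bC[\bT_x]$, and similarly the $A_M$-action on the stalks of $\psi_G\bC_X$ at $x$ factors through $A_M \twoheadrightarrow \bC[\bT^G_x]$, where $\bT^G_x := V(\id{u_k - 1 \mid g_k(x)\neq 0}) \subset S_M^*$. Consequently $\Supp_x(\psi_F\bC_X) \subset \bT_x$ and $\Supp_x(\psi_G\bC_X) \subset \bT^G_x$, and the uniform supports are the full preimages under the natural retractions $\pi_F : S^* \to \bT_x$ and $\pi_G : S_M^* \to \bT^G_x$ coming from the $F$- and $G$-natural splittings.

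Second, I would check that $\phi_M$ is compatible with these retractions, in the form of an induced map $\ol{\phi}_M : \bT^G_x \to \bT_x$ making the square $\pi_F \circ \phi_M = \ol{\phi}_M \circ \pi_G$ commute. The verification rests on the identity $g_k(x) = \prod_j f_j(x)^{m_{kj}}$: whenever $g_k(x) \neq 0$, one has $m_{kj} = 0$ for every $j$ with $f_j(x) = 0$. Rephrased, $\phi_M$ carries the complementary torus $(\bC^*)^{p_x}$ of the $G$-natural splitting into the complementary torus $(\bC^*)^{r_x}$ of the $F$-natural splitting, which yields both the well-definedness of $\ol{\phi}_M$ and commutativity of the square. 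Non-degeneracy of the specialization plays no direct role in this bookkeeping step, but will be essential for the next one.

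Third, I would extract from \cite[2.3.8]{Sab} the ordinary-support base change at the level of subtori,
$$
\Supp_x(\psi_G\bC_X) = \ol{\phi}_M^{-1}(\Supp_x(\psi_F\bC_X)) \text{ in } \bT^G_x,
$$
which is where the non-degeneracy hypothesis enters: it ensures that $F\mapsto F^M$ satisfies the assumptions of Sabbah's specialization formula, so that the specialization complex transforms exactly by the induced monomial map. The proposition then follows from a short diagram chase:
\begin{align*}
\phi_M^{-1}(\Supp_x^{unif}(\psi_F\bC_X)) &= \phi_M^{-1}(\pi_F^{-1}(\Supp_x(\psi_F\bC_X))) = \pi_G^{-1}(\ol{\phi}_M^{-1}(\Supp_x(\psi_F\bC_X))) \\
&= \pi_G^{-1}(\Supp_x(\psi_G\bC_X)) = \Supp_x^{unif}(\psi_G\bC_X).
\end{align*}
The main obstacle is the accurate translation of \cite[2.3.8]{Sab} into precisely this ordinary-support base change, including pinning down how the non-degeneracy assumption makes $\ol{\phi}_M$, rather than the ambient $\phi_M$, the correct map governing the transformation; once this is settled, the remaining argument is essentially bookkeeping with the natural splittings.
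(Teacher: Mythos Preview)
Your proposal is correct and follows essentially the same approach as the paper: invoke \cite[2.3.8]{Sab} for the ordinary-support base change, then promote to uniform supports by verifying that $\phi_M$ respects the natural splittings (your commutative square $\pi_F\circ\phi_M=\ol{\phi}_M\circ\pi_G$ is the geometric dual of the paper's coordinate-ring diagram, and your check via $g_k(x)=\prod_j f_j(x)^{m_{kj}}$ is exactly the paper's set equality $\{k:g_k(x)\neq 0\}=\{k:m_{kj}=0\ \forall j,\,f_j(x)=0\}$). The only minor discrepancy is bookkeeping: the paper states the ordinary-support identity at the ambient-torus level ($\phi_M^{-1}(\Supp_x(\psi_F\bC_X))=\Supp_x(\psi_G\bC_X)$ in $S_M^*$) rather than your subtorus version with $\ol{\phi}_M$, and it pinpoints non-degeneracy as the condition $D_M=D$ (so $x\in D$ implies $x\in D_M$) rather than as a hypothesis internal to Sabbah's formula.
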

\begin{proof} Let $D_M$ be the union of the zero loci of $g_{k}$. The non-degeneracy assumption is equivalent to saying that $D_M=D$. We can assume $x$ is in $D$.

The statement holds for the usual support. Indeed, $\phi^{-1}_M(\Supp_x(\psi_F\bC_X))=\Supp_x (\psi_F\bC_X \mathop{\otimes}^{L}_A A_M)$, and by \cite[2.3.8]{Sab}, 
$
\Supp_x (\psi_F\bC_X \mathop{\otimes}^{L}_A A_M ) = \Supp_x(\psi_G\bC_X).
$

Recall that the uniform support at $x$ is obtained from the $F$-natural splitting $S^*=\bT_x\times (\bC^*)^{r_x}$, where $\bT_x$ is the zero locus $V(\id{t_j-1\mid f_j(x)\ne 0})$ in $S^*$ and contains $\Supp_x(\psi_F\bC_X)$, by setting
$
\Supp_x^{unif} (\psi_F\bC_X) = \Supp_x(\psi_F\bC_X)\times (\bC^*)^{r_x}.
$

Similarly, $S_M^*=\bT_{M,x}\times (\bC^*)^{p_x}$, where $\bT_{M,x}$ is the zero locus $V(\id{u_k-1\mid g_k(x)\ne 0})$ in $S_M^*$, and $\Supp_x^{unif}(\psi_G\bC_X) =\Supp_x(\psi_G\bC_X) \times (\bC^*)^{p_x}$, where the coordinates of the last term $(\bC^*)^{p_x}$ correspond to $u_{k}$ such that $g_{k}(x)\ne 0$. Hence it is enough to show that $\phi_M$, or equivalently $\phi^\#_M$, is compatible with the splittings.

The splitting on $S^*$ is by definition compatible with the splitting 
$$\{1,\ldots, r\}=\{j\mid f_j(x)=0\}\cup\{j\mid f_j(x)\ne 0\}.$$
The splitting on $S_M^*$ is compatible with the splitting
$$
\{1,\ldots, p\}=\{k\mid g_k(x)=0\}\cup\{k\mid g_k(x)\ne 0\}.
$$ 
Hence it is enough to show that these two splittings are compatible under $\phi^\#_M$. More precisely, let $A'=\bC[t_j^{\pm}\mid f_j(x)=0]$ and $A''=\bC[t_j^{\pm}\mid f_j(x)\ne 0]$, so that $A=A'\otimes_\bC A''$. Similarly define $A_M'=\bC[u_k\mid g_k(x)\ne 0]$ and $A_M''=\bC[u_k\mid g_k(x)\ne 0]$, so that $A_M=A_M'\otimes A_M''$. We need to show that the middle horizontal map in the diagram 
$$
\xymatrix{
A' \ar@{-->}[r] \ar[d]& A_M'\ar[d]\\
A=A'\otimes A'' \ar[r]^{\phi_M^\#\quad} \ar@{->>}[d]& A_M=A_M'\otimes A_M''\ar@{->>}[d]\\
A' \ar@{-->}[r]& A_M'
}
$$
induces the other two horizontal maps making the diagram commute, where the top vertical maps are $id\otimes 1$ and the bottom vertical maps are the natural quotient maps given by the $F$ and $F_M$ natural splittings. In other words, we need to show that the set
$$\{k\in\{1,\ldots ,p\}\mid u_{k} \text{ does not appear in }\phi^\#_M(t_j)\text{ for any }j\text{ with }f_j(x)=0\}$$
is the same as the set 
$$
\{ k \in\{1,\ldots ,p\}\mid g_{k}(x)\ne 0 \}.
$$
This is true since both sets equal the set
\begin{align*}
&\{ k \in\{1,\ldots ,p\}\mid m_{kj}=0 \text{ for all } j\text{ with }f_j(x)=0 \}.
\end{align*}
\end{proof}

\begin{exa}\label{exaSpG} $F=(f_1,\ldots ,f_r)$ specializes to $G=\prod_{j=1}^rf_j$ via the $1\times r$ matrix $M=(1 \ldots 1)$. Hence this specialization is non-degenerate. Here $S_M^*\ra S^*$ is the diagonal inclusion and $\phi:A\ra A_M=\bC[u,u^{-1}]$ is given by
$
t_j\mapsto u.
$
In this case, the uniform support of $\psi_G\bC_X$ at $x$ is the same as the usual support:
$$
\Supp_x^{unif}(\psi_G\bC_X) = \Supp_x(\psi_G\bC_X),
$$
and it consists of the eigenvalues of the monodromy on the Milnor fiber of $G$ at $x$ by Remark \ref{remOV}.
\end{exa}

\begin{exa}\label{exaDEL} $F=(f_1,\ldots ,f_r)$ specializes to $G=(f_1,\ldots ,f_{r-1})$. To see this, in Definition \ref{dfSpP} let $F':S\ra S_M=\bC^{r-1}$ be defined by
$
(\tau_1,\ldots ,\tau_r)\mapsto (\tau_1,\ldots ,\tau_{r-1}),
$
that is $M$ is the square identity matrix with the last row deleted. Hence this specialization is degenerate if $f_r$ is non-constant. Here the inclusion $S_M^*\ra S^*$ is given by 
$
(\tau_1,\ldots ,\tau_{r-1})\mapsto (\tau_1,\ldots ,\tau_{r-1},1).
$
The map $A\ra A_M$ is the natural one induced by the quotient $A_M=A/\id{t_r-1}$.
\end{exa}

\begin{lem}\label{lemRedSp} Let $x$ be a point in $X$. For any collection $G=(g_1,\ldots, g_p)$ of hypersurface germs at $x$ in $X$, there exists a collection $F=(f_1,\ldots,f_r)$ of hypersurface germs at $x$ such that the set of $f_j$ with $f_j(x)=0$ define mutually distinct reduced and irreducible germs, and such that $G$ is a non-degenerate specialization of $F$.
\end{lem}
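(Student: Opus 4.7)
The plan is to factor each $g_k$ into its irreducible analytic germ components at $x$ together with a unit, and then take $F$ to be the list of these irreducible factors together with the units. Concretely, since the local ring $\cO_{X,x}$ of analytic germs is a unique factorization domain, every nonzero germ admits a factorization
$$g_k \;=\; u_k\prod_{i=1}^{s}h_i^{a_{ki}},$$
where $h_1,\ldots,h_s$ are chosen once and for all as representatives of the distinct irreducible germs appearing in any of the $g_k$, each $u_k\in\cO_{X,x}^{\times}$ is a unit at $x$, and each $a_{ki}\in\bZ_{\ge 0}$ is the multiplicity of $h_i$ in $g_k$ (so $a_{ki}=0$ if $h_i$ does not appear in $g_k$).

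Set $F=(f_1,\ldots,f_r):=(h_1,\ldots,h_s,u_1,\ldots,u_p)$, so $r=s+p$. Those $f_j$ that vanish at $x$ are exactly $h_1,\ldots,h_s$, and these are by construction mutually distinct, reduced, and analytically irreducible germs at $x$. Define the specialization matrix $M\in\bN^{p\times r}$ by $M_{k,j}=a_{k,j}$ for $j\le s$ and $M_{k,\,s+k'}=\delta_{k,k'}$ for $j=s+k'$. A direct check shows
$$\prod_{j=1}^{r} f_j^{M_{k,j}} \;=\; u_k\prod_{i=1}^{s}h_i^{a_{ki}} \;=\; g_k,$$
so $G=F^M$ in the sense of Definition \ref{dfSpP}. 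Since the last $p$ columns of $M$ form a $p\times p$ identity block, $M$ has full row rank $p$, which ensures surjectivity of the induced map $(\bC^*)^r\to(\bC^*)^p$.

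Non-degeneracy then follows by a two-case check: for $j\le s$, the germ $f_j=h_j$ is non-constant and $\sum_k M_{k,j}=\sum_k a_{kj}\ge 1$ because $h_j$ was included precisely because it appears in at least one $g_k$; for $j=s+k'$, if $u_{k'}$ is non-constant then $\sum_k M_{k,j}=1$, and otherwise there is nothing to verify. The only delicate conceptual point, and the reason a naive polynomial factorization would not do, is that the lemma concerns \emph{germs}: a polynomial can be globally irreducible yet analytically reducible at $x$, so the required mutually distinct analytically irreducible components must be produced in $\cO_{X,x}$ rather than in $\bC[x_1,\ldots,x_n]$. With that understood, the matrix $M$ above is still integral and still defines the same monomial map of tori, so the notion of non-degenerate specialization from Definition \ref{dfSpP} transfers verbatim to the germ setting and the remaining verifications are immediate consequences of unique factorization.
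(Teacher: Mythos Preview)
Your proof is correct and follows essentially the same approach as the paper: factor the $g_k$ into analytically irreducible germs at $x$, then pad $F$ with extra entries so that the specialization matrix has full row rank $p$. The only difference is in the padding: the paper appends $p-\operatorname{rank}(M)$ copies of the constant $1$ (after first permuting rows so the top $\operatorname{rank}(M)$ are independent), whereas you append the $p$ unit parts $u_1,\ldots,u_p$, which automatically yields an identity block and hence full rank without any rank computation; your choice also makes $G=F^M$ hold literally as an equality of germs rather than only up to units.
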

\begin{proof}
Suppose we find $F$ and a matrix $M=(m_{kj})$ such that $G=F^M$. Then we need to ensure the surjectivity of the map $F':S^*\ra S^*_M$ defined by $$(\tau_1,\ldots ,\tau_r)\mapsto (\ldots,\prod_{j=1}^r\tau_k^{m_{kj}},\ldots ).$$
This is achieved if the rank of the matrix $M$ is $p$, since the linear transformation associated to $M$ is the differential of the map $F'$ on the associated Lie algebras of $S^*$ and $S^*_M$.

Let $r$ by the number of analytically irreducible components of the germ $\prod_{k=1}^pg_k$. Let $f_j$ for $1\le j\le r$ be those components. Write 
$$
g_k=\prod_{j=1}^rf_j^{m_{kj}}
$$
and let $M$ be the $p\times r$ matrix $(m_{kj})$. We can assume by permuting the germs $g_k$ that the first $\rank (M)$ rows of $M$ are linearly independent.

Let $F=(f_1,\ldots, f_r,1,\ldots, 1)$, where the number of $1$'s added after $f_r$ is $p-\rank (M)$. Then $F$ satisfies the conditions of the lemma and $G$ is the specialization of $F$ via the rank $p$ matrix of size $p\times (p+r-\rank (M))$
$$
\left[ M\,
\begin{array}{|c}
O \\  
\hhline{|-} I 
\end{array}\right],
$$
where $O$ is the zero matrix, and $I$ is the identity square matrix of size $p-\rank (M)$.
\end{proof}

\subsection{Proof of Theorem \ref{ThmGenLib}}\label{subsPfTGL} By construction of the uniform support, it is enough to prove the statement for $\Supp_x(\psi_F\bC_X)$. By Lemma \ref{lemRedSp}, the germ of $F$ at $x$ in $X$ is the non-degenerate specialization $G^M$ of a map germ $G=(g_1,\ldots, g_p)$ via some matrix $M$, where the set of $g_k$  with $g_k(x)=0$ define mutually distinct reduced and irreducible germs. Thus $\Supp_x(\psi_F\bC_X)$ equals $\phi_M^{-1}(\Supp_x(\psi_G\bC_X))$ by specialization of the support.  By Theorem \ref{thm2}, $\Supp_x(\psi_G\bC_X)$ is the cohomology support locus of $U_{G,x}$, and thus it is a finite union of torsion translated subtori of $L(U_{G,x})$, see Example \ref{exaB}.  Since $\phi_M$ is a torus homomorphism, the same is true for $\Supp_x(\psi_F\bC_X)$.   $\Box$

\subsection{ Thom-Sebastiani.} Next, we state a multiplicative Thom-Sebastiani type of result for the support of the Sabbah specialization complex. First, we have:

\begin{lem}\label{lemTSdec} For $i=1, 2$, let $X_i=\bC^{n_i}$, $F_i=(f_{i1},\ldots ,f_{ir_i})$ with $f_{ij}\ne 0$, $D_i=\cup_jf^{-1}_{ij}(0)$, $x_i\in D_i$.  Then, in $(\bC^*)^{r_1}\times (\bC^*)^{r_2}$, we have
$$
\Supp_{(x_1,x_2)}^{unif}(\psi_{F_1\times F_2}\bC_{X_1\times X_2})=\Supp_{x_1}^{unif}(\psi_{F_1}\bC_{X_1}) \times \Supp_{x_2}^{unif}(\psi_{F_2}\bC_{X_2}).
$$
\end{lem}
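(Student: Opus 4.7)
The plan is to show that the Sabbah specialization functor turns external products of polynomial maps into external products of complexes, so the lemma reduces to a Künneth computation plus bookkeeping for the natural splittings. Write $F = F_1 \times F_2 : X_1 \times X_2 \to S = \bC^{r_1+r_2}$, so that $D = (D_1 \times X_2) \cup (X_1 \times D_2)$ and $U = X \smallsetminus D = U_1 \times U_2$, where $U_i = X_i \smallsetminus D_i$. The universal cover of $S^* = S_1^* \times S_2^*$ is $\widetilde{S_1^*} \times \widetilde{S_2^*}$, the coefficient ring is $A = A_1 \otimes_{\bC} A_2$, and by construction the local system $\cL^F$ of Lemma \ref{lemSimpl} on $U$ is the external product $\cL^{F_1} \boxtimes \cL^{F_2}$, with the two factors carrying independent $A_1$- and $A_2$-actions.

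First I would compute the stalk of $\psi_F \bC_X$ at a point $(x_1,x_2)\in D$. By Lemma \ref{lemSimpl} and formula (\ref{eqStalk}), this stalk equals $H^\bullet(U_{F,(x_1,x_2)}, \cL^F)$. Choosing a cofinal system of polycylinder neighborhoods of the form $V_1 \times V_2$ in $X_1 \times X_2$, we get $U_{F,(x_1,x_2)} \cong U_{F_1,x_1} \times U_{F_2,x_2}$ (after shrinking), and the restriction of $\cL^F$ is $\cL^{F_1} \boxtimes \cL^{F_2}$. The Künneth formula for cohomology with local coefficients then yields, as $(A_1 \otimes_\bC A_2)$-modules,
\[
H^k\!\bigl(i_{(x_1,x_2)}^* \psi_F \bC_X\bigr) \;\cong\; \bigoplus_{p+q=k} H^p\!\bigl(i_{x_1}^* \psi_{F_1}\bC_{X_1}\bigr) \otimes_{\bC} H^q\!\bigl(i_{x_2}^* \psi_{F_2}\bC_{X_2}\bigr).
\]

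Second, for any $A_i$-module $M_i$, the support of $M_1 \otimes_\bC M_2$ as an $A_1 \otimes_\bC A_2$-module on $\Spec A_1 \times \Spec A_2$ is the product $\Supp M_1 \times \Supp M_2$; applied termwise in the Künneth decomposition above, this gives
\[
\Supp_{(x_1,x_2)}(\psi_F \bC_X) \;=\; \Supp_{x_1}(\psi_{F_1}\bC_{X_1}) \times \Supp_{x_2}(\psi_{F_2}\bC_{X_2}).
\]

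Finally I would verify the bookkeeping for uniform supports. The index set $\{1,\dots,r_1+r_2\}$ splits according to whether $f_{ij}(x_i)=0$ as the disjoint union of the corresponding splittings for $F_1$ at $x_1$ and $F_2$ at $x_2$; hence $\bT_{(x_1,x_2)} = \bT_{x_1} \times \bT_{x_2}$, $r_{(x_1,x_2)} = r_{x_1} + r_{x_2}$, and the $(F_1 \times F_2)$-natural splitting of $S^*$ at $(x_1,x_2)$ is the product of the individual $F_i$-natural splittings. Multiplying both sides by $(\bC^*)^{r_{(x_1,x_2)}}$ and regrouping factors yields the asserted equality of uniform supports. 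The one point requiring care is the Künneth identification, since the coefficient rings $A_i$ are not fields; but working over $\bC$ with cofinal product neighborhoods and using that $\cL^{F_1} \boxtimes \cL^{F_2}$ is a flat external product of rank-one local systems with independent monodromies suffices, and this is essentially the content of the external-product compatibility of the Sabbah specialization functor.
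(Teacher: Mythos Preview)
Your proof is correct and follows essentially the same route as the paper: the paper also starts from the identity $\cL^{F_1\times F_2}=\cL^{F_1}\boxtimes\cL^{F_2}$, deduces (via ``standard arguments'') the derived external product formula $\psi_{F_1\times F_2}\bC_{X_1\times X_2}=\psi_{F_1}\bC_{X_1}\boxtimes^L\psi_{F_2}\bC_{X_2}$, and then passes to uniform supports using Definition~\ref{dfUS}. Your stalkwise K\"unneth computation with product neighborhoods is exactly what unpacks those ``standard arguments,'' and your bookkeeping for the natural splittings is more explicit than the paper's one-line appeal to the definition.
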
 
\begin{proof}
Using the notation of Lemma \ref{lemSimpl} adapted to our situation, there is an equality of local systems of $A_1\otimes A_2$-modules
$$
\cL^{F_1\times F_2}=\cL^{F_1}\boxtimes\cL^{F_2},
$$
where $\boxtimes$ denotes the external direct product on $U_1\times U_2$, with $U_i=X_i\smallsetminus D_i$, and $A_i$ is the coordinate ring of $S_i^*=(\bC^*)^{r_i}$. Using then Lemma \ref{lemSimpl} and standard arguments, one can show that
$$
\psi_{F_1\times F_2}\bC_{X_1\times X_2}=\psi_{F_1}\bC_{X_1}\mathop{\boxtimes}^L\psi_{F_2}\bC_{X_2},
$$
Hence the claim holds for the usual supports. See also \cite[Proposition 3.1]{PS} for the same statement for the cohomology support loci. The claim for the uniform support follows easily from Definition \ref{dfUS}.\end{proof}

The following is the multiplicative Thom-Sebastiani property for the support of the Sabbah specialization complex:

\begin{prop}\label{propMTS} With notation as in Lemma \ref{lemTSdec}, let $r=r_1=r_2$. Let $G$ be the map
$$G=F_1\cdot F_2: X_1\times X_2\lra (\bC^*)^r$$
defined by $$(x_1,x_2)\mapsto (f_{11}(x_1)f_{21}(x_2),\ldots ,f_{1r}(x_1)f_{2r}(x_2))$$
for $x_i\in X_i$, $i=1,2$. Then
$$
\Supp_{(x_1,x_2)}^{unif}(\psi_{G}\bC_{X_1\times X_2}) =\bigcap_{i=1,2}\Supp_{x_i}^{unif}(\psi_{F_i}\bC_{X_i}).
$$
\end{prop}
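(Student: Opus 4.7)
The plan is to recognize $G$ as a non-degenerate specialization of the external product $F_1\times F_2$ and then combine Proposition~\ref{propSuppSp} with the Thom--Sebastiani Lemma~\ref{lemTSdec}. This reduces the multiplicative statement to a simple pull-back computation on tori.

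More concretely, I would first introduce the map
$$F_1\times F_2=(f_{11},\ldots,f_{1r},f_{21},\ldots,f_{2r}):X_1\times X_2\lra \bC^{2r}.$$
Let $M$ be the $r\times 2r$ matrix with $m_{kk}=m_{k,r+k}=1$ and zero entries elsewhere. Then $g_k=f_{1k}\cdot f_{2k}=(F_1\times F_2)_k\cdot(F_1\times F_2)_{r+k}$, so $G=(F_1\times F_2)^M$ in the sense of Definition~\ref{dfSpP}. Every column of $M$ has row-sum equal to $1$, so the non-degeneracy hypothesis of Definition~\ref{dfSpP}(b) is automatically satisfied. The induced torus homomorphism
$$\phi_M:(\bC^*)^r\lra (\bC^*)^{2r},\qquad (\lambda_1,\ldots,\lambda_r)\mapsto(\lambda_1,\ldots,\lambda_r,\lambda_1,\ldots,\lambda_r),$$
is the diagonal embedding.

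I would then apply Proposition~\ref{propSuppSp} to obtain
$$\Supp_{(x_1,x_2)}^{unif}(\psi_G\bC_{X_1\times X_2})=\phi_M^{-1}\bigl(\Supp_{(x_1,x_2)}^{unif}(\psi_{F_1\times F_2}\bC_{X_1\times X_2})\bigr),$$
and Lemma~\ref{lemTSdec} to identify the argument of $\phi_M^{-1}$ with $\Supp_{x_1}^{unif}(\psi_{F_1}\bC_{X_1})\times\Supp_{x_2}^{unif}(\psi_{F_2}\bC_{X_2})$ inside $(\bC^*)^r\times(\bC^*)^r$. Since $\phi_M$ is the diagonal, the preimage of a product $\Sigma_1\times\Sigma_2$ is exactly $\Sigma_1\cap\Sigma_2$, which yields the desired equality.

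The only genuine content is the two already-cited tools (specialization and external Thom--Sebastiani); the remainder is bookkeeping on the specialization matrix $M$. I do not foresee a real obstacle, though one should be mindful that non-degeneracy is essential because it is precisely what allows Proposition~\ref{propSuppSp} to pass between uniform supports (and not merely supports), ensuring that the factors $(\bC^*)^{r_{x_i}}$ coming from components $f_{ij}$ not vanishing at $x_i$ are correctly matched through $\phi_M$.
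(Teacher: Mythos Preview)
Your proposal is correct and follows essentially the same route as the paper: both recognize $G$ as a (non-degenerate) specialization of $F_1\times F_2$ via coordinate-wise multiplication, invoke Proposition~\ref{propSuppSp}, and then apply Lemma~\ref{lemTSdec} together with the observation that $\phi_M$ is the diagonal embedding so that pulling back a product gives an intersection. Your write-up is simply more explicit about the matrix $M$ and the non-degeneracy check than the paper's version.
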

\begin{proof}
Let $S=\bC^r$ and let $F':S\times S\ra S$ be defined by multiplication coordinate-wise. Then $G=F'\circ (F_1\times F_2)$ and thus $F_1\times F_2$ specializes to $G$, cf. Definition \ref{dfSpP}. Hence, by Proposition \ref{propSuppSp}, $\Supp_{(x_1,x_2)}^{unif}(\psi_{F_1\times F_2}\bC_{X_1\times X_2})$ specializes to $\Supp_{(x_1,x_2)}^{unif}(\psi_{G}\bC_{X_1\times X_2})$ via intersection with the diagonal in $S^*\times S^*$. The claim then follows from Lemma \ref{lemTSdec}.
\end{proof}

\section{Bernstein-Sato ideals}

In this section we develop  some properties of Bernstein-Sato ideals. We use them to prove geometrically a weaker version of Theorem \ref{thmConjIn}. With a similar proof, we then prove Theorems \ref{thmR1} and \ref{thmSp}.

\subsection{Ideals of Bernstein-Sato type.}\label{subsIBST} There are ways to define ideals of Bernstein-Sato type different than presented in the Introduction. These other ideals are useful for understanding the Bernstein-Sato ideal $B_F$. So we start with a more general definition.

 Let $X=\bC^n$. Let $D_X$ denote the Weyl algebra of algebraic differential operators on $X$,
$$
D_X=\bC\left[x_1,\ldots,x_n,\frac{\pa}{\pa x_1},\ldots ,\frac{\pa}{\pa x_r}\right]
$$
with the usual commutation relations.

Let $F=(f_1,\ldots , f_r)$ with $0\ne f_j\in\bC[x_1,\ldots, x_n]$. Let $$M=\{ \bm_k \in \bN^r\mid k=1,\ldots , p \}$$ be a collection of  vectors, which we also view as an $p\times r$ matrix $M=(m_{kj})$ with $m_{kj}=(\bm_{k})_j$, with $r, p\ge 1$.

\begin{df}
The Bernstein-Sato ideal associated to $F$ and $M$ is the ideal
$$
B_{F}^{\,M}=B_F^{\bm_1,\ldots,\bm_p }\subset \bC[s_1,\ldots ,s_r]
$$
of all polynomials $b(s_1,\ldots,s_r)$ such that
$$
b(s_1,\ldots,s_r)\prod_{j=1}^rf_j^{s_j} = \sum_{k=1}^p P_k\prod_{j=1}^rf_j^{s_j+m_{kj}}
$$
for some algebraic differential operators $P_k$ in $D_X[s_1,\ldots ,s_r]$. 
\end{df}

\begin{rem} (a)   $B_F$, as defined before, is $B^{\,\bone}_{F}$, where $\bone=(1,\ldots ,1)$.

(b) For a point $x$ in $X$, the {\it local Bernstein-Sato ideal} $B^{\,M}_{F,x}$ is similarly defined by replacing $D_X$ with the ring $\cD_{X,x}$ of germs of holomorphic differential operators at $x$. Then
 $$B_F^{\,M}=\bigcap_{x\in X}B_{F,x}^{\,M},$$
see \cite[Corollary 6]{BO}.

(c) The ideals $B^{\,M}_{F,x}$ are non-zero by Sabbah \cite{Sab-p}, see also  \cite{Bah}, \cite{G}.
\end{rem}

\begin{exa}\label{exaMon} If $f_j$ are monomials, write $f_j=\prod_{i=1}^nx_i^{a_{i,j}}$. Let 
$l_i(s_1,\ldots ,s_r)=\sum_{j=1}^ra_{i,j}s_j.$
Let  $a_i=\sum_{j=1}^ra_{i,j}$.
Then 
$$
B_F=\id{\prod_{i=1}^n \left(l_i(s)+1 \right)\cdots\left(l_i(s)+a_i \right)}.
$$
\end{exa}

\begin{lem}\label{lemCors}\cite[Lemma 1.5]{G} By the correspondence $s_j\leftrightarrow-\pa_{t_j}t_j$ and $f_j\leftrightarrow\delta(t_j-f_j)$, where $\delta(u)$ denotes the Dirac delta function, i.e. the standard generator of $D_{\bA^1}/D_{\bA^1}u$ with $u$ the affine coordinate on $\bA^1$, there is an isomorphism of $\cD_X[s_1,\ldots,s_r]$-modules
$$\cD_X[s_1,\ldots,s_r]\prod_{j=1}^rf_j^{s_r}\cong \cD_X[-\pa_{t_1}t_1,\ldots ,-\pa_{t_r}t_r]\prod_{j=1}^r\delta(t_j-f_j).$$
The action of $t_j$ on the right-hand side corresponds to replacing $s_j$ by $s_j+1$ on left-hand side.
\end{lem}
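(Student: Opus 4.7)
The plan is to realize both modules as cyclic $\cD_X[s]$-modules — the right-hand side via the substitution $s_j\leftrightarrow -\partial_{t_j}t_j$ — and show their annihilator ideals of the distinguished generators coincide. I would interpret the right-hand side via the graph embedding $\iota_F\colon X\hookrightarrow X\times \bA^r$, $x\mapsto(x,f_1(x),\ldots,f_r(x))$, whose $\cD$-module direct image is
\[
\iota_{F,+}\cO_X\;\cong\; \cD_{X\times\bA^r}\Big/\sum_{j=1}^r\cD_{X\times\bA^r}(t_j-f_j),
\]
with $1_\iota=\prod_j\delta(t_j-f_j)$ the class of $1$.  The operators $\theta_j:=-\partial_{t_j}t_j$ commute pairwise (disjoint variables) and with all of $\cD_X$ (the $x$'s and $\partial_x$'s are independent of the $t$'s and $\partial_t$'s), so $\cD_X[\theta_1,\ldots,\theta_r]\subset\cD_{X\times\bA^r}$ is genuinely a polynomial ring over $\cD_X$.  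I then define
\[
\Phi\colon\ \cD_X[s]\prod_j f_j^{s_j}\longrightarrow \cD_X[\theta]\cdot 1_\iota,\qquad P(x,\partial_x,s)\textstyle\prod f^s\;\longmapsto\; P(x,\partial_x,\theta)\cdot 1_\iota.
\]

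The central task is well-definedness: the annihilator of $\prod f^s$ in $\cD_X[s]$ must land in the annihilator of $1_\iota$ under $s_j\mapsto\theta_j$. The generating relation on the left is the chain rule; after clearing denominators it reads
\[
\Bigl(\prod_{k=1}^r f_k\Bigr)\partial_{x_i}\prod f^s \;=\; \sum_{j=1}^r s_j(\partial_{x_i}f_j)\prod_{k\ne j}f_k\cdot\prod f^s.
\]
I would verify this on the right by two computations in $\iota_{F,+}\cO_X$. First, applying $\partial_{x_i}$ to $(t_k-f_k)1_\iota=0$ and using $[\partial_{x_i},t_k-f_k]=-\partial_{x_i}(f_k)$ gives $(t_k-f_k)\partial_{x_i}1_\iota=\partial_{x_i}(f_k)\,1_\iota$; combined with $(t_k-f_k)\partial_{t_j}1_\iota=-\delta_{jk}1_\iota$ this forces
\[
\partial_{x_i}\cdot 1_\iota\;=\;-\sum_j \partial_{x_i}(f_j)\,\partial_{t_j}\,1_\iota.
\]
Second, $\theta_j 1_\iota=-\partial_{t_j}(t_j 1_\iota)=-\partial_{t_j}(f_j 1_\iota)=-f_j\partial_{t_j}1_\iota$, i.e.\ $f_j\partial_{t_j}1_\iota=-\theta_j 1_\iota$. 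Multiplying the first identity by $\prod_k f_k$ and using the second to convert each $f_j\partial_{t_j}1_\iota$ into $-\theta_j 1_\iota$ then reproduces the chain-rule relation after the substitution $s_j\leftrightarrow\theta_j$.

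To finish, I construct the inverse $\Psi\colon \cD_X[\theta]1_\iota\to \cD_X[s]\prod f^s$, $P(\theta)1_\iota\mapsto P(s)\prod f^s$, whose well-definedness follows from the same identities read in reverse. The remaining claim about the $t_j$-action then follows from $t_j 1_\iota=f_j 1_\iota$ and the commutation $t_j\theta_j=(\theta_j+1)t_j$ (immediate from $[\partial_{t_j},t_j]=1$), which give $t_j P(\theta)1_\iota=P(\theta+e_j)f_j\, 1_\iota$, corresponding under $\Psi$ to $f_j P(s+e_j)\prod f^s=P(s+e_j)\prod f^{s+e_j}$, i.e.\ the shift $s_j\mapsto s_j+1$ on the left. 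The main obstacle is the careful bookkeeping for the annihilator-ideal match; the conceptual subtlety is that $f_j^{-1}$ appears implicitly on the left via the chain rule (and explicitly inside $\cO_X[s,f^{-1}]\prod f^s$) while no $t_j^{-1}$ is available on the right — the identity $f_j\partial_{t_j}1_\iota=-\theta_j 1_\iota$ is exactly the bridge that converts one presentation into the other.
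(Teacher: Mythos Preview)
The paper does not prove this lemma; it is quoted from Gyoja \cite[Lemma~1.5]{G}.  Your attempt has the right shape but contains two genuine gaps.

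First, the presentation $\iota_{F,+}\cO_X\cong\cD_{X\times\bA^r}/\sum_j\cD_{X\times\bA^r}(t_j-f_j)$ is wrong.  In coordinates $(x',u)$ with $u_j=t_j-f_j$ one has $\iota_{F,+}\cO_X=\cO_{X'}\boxtimes\bC[\pa_u]$, whose annihilator is generated by the $u_j$ \emph{and} the $\pa_{x'_i}=\pa_{x_i}+\sum_j(\pa_{x_i}f_j)\pa_{t_j}$.  Your quotient $\cD/\sum\cD(t_j-f_j)$ is $\cD_{X'}\otimes\bC[\pa_u]$, a much larger module in which $\pa_{x_i}\cdot 1_\iota\ne -\sum_j(\pa_{x_i}f_j)\pa_{t_j}1_\iota$.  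The identity you ``derive'' is in fact an extra defining relation of $\iota_{F,+}\cO_X$; it does not follow from $(t_k-f_k)1_\iota=0$ alone (being killed by every $t_k-f_k$ only forces an element into $\cO_X\cdot 1_\iota$, not to zero).

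Second, and more seriously, checking that the chain-rule relation is preserved does not make $\Phi$ or $\Psi$ well-defined: you would need to know that these relations \emph{generate} $\mathrm{Ann}_{\cD_X[s]}\bigl(\prod f^s\bigr)$ and $\mathrm{Ann}_{\cD_X[\theta]}(1_\iota)$, which is exactly the nontrivial content you are trying to prove.  The clean way around this (and essentially Gyoja's argument) is to compare the ambient \emph{free} modules instead of the cyclic ones.  Identify $\cO_X[f^{-1},s]\prod f^s$ with $\cO_X[f^{-1}][s]$ via $\prod f^s\leftrightarrow 1$; then $\pa_{x_i}$ acts as $\pa_{x_i}^{\cO}+\sum_j s_j f_j^{-1}\pa_{x_i}(f_j)$.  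On the other side, your formula $(\theta_j-\al_j)\pa_t^\al\,1_\iota=-f_j\pa_t^{\al+e_j}1_\iota$ shows that $\{\theta^\al 1_\iota\}$ and $\{\pa_t^\al 1_\iota\}$ are related by an invertible triangular change over $\cO_X[f^{-1}]$, so $\iota_{F,+}(\cO_X[f^{-1}])$ is free of rank one over $\cO_X[f^{-1}][\theta]$ on $1_\iota$, and from $\pa_{x_i}1_\iota=-\sum_j(\pa_{x_i}f_j)\pa_{t_j}1_\iota=\sum_j f_j^{-1}(\pa_{x_i}f_j)\theta_j 1_\iota$ the $\pa_{x_i}$-action is $\pa_{x_i}^{\cO}+\sum_j\theta_j f_j^{-1}\pa_{x_i}(f_j)$.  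These two descriptions coincide under $s_j\leftrightarrow\theta_j$, giving a $\cD_X[s]$-isomorphism of the ambient modules sending $\prod f^s$ to $1_\iota$; restricting to the cyclic submodules yields the lemma without ever computing annihilators.  Your verification of the $t_j$-shift is fine once this is in place.
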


 Let $Y=X\times\bC^r$ with affine coordinates $x_1,\ldots,x_n,t_1,\ldots,t_r$. Define for  $\bm\in \bN^r$,
$$
V^\bm D_Y := D_X\otimes_\bC \mathop{\sum_{\boldsymbol{\beta},\bsy{\gamma}\in\bN^r}}_{\bsy{\beta}-\bsy\gamma\ge \bm} \bC t_1^{\beta_1}\ldots t_r^{\beta_r} \pa_{t_1}^{\gamma_1}\ldots \pa_{t_r}^{\gamma_r}\quad \subset\quad D_Y.
$$
The following is the $\cD$-module theoretic interpretation of Bernstein-Sato ideals and it is a consequence of Lemma \ref{lemCors}: 

\begin{prop}\label{corDI} Let $\bm\in \bN^r$. The Bernstein-Sato ideal $B_{F}^{\bm}$ consists of the polynomials $b(s_1,\ldots,s_r)$ such that
$$
b(-\pa_{t_1}t_1,\ldots ,-\pa_{t_r}t_r)\cdot V^{\bsy{0}} D_Y\cdot \prod_{j=1}^r\delta(t_j-f_j)\subset V^\bm D_Y\cdot\prod_{j=1}^r\delta(t_j-f_j).
$$
\end{prop}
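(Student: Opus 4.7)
My plan is to translate both sides of the claimed equivalence via the $\cD_X[s]$-module isomorphism
$$ \Phi:\ \cD_X[s_1,\ldots,s_r]\prod_{j=1}^r f_j^{s_j} \;\xrightarrow{\;\sim\;}\; \cD_X[-\pa_{t_1}t_1,\ldots,-\pa_{t_r}t_r]\prod_{j=1}^r\delta(t_j-f_j) $$
supplied by Lemma~\ref{lemCors}. The key technical ingredient I will establish first is the identification, as left $\cD_X$-submodules of the ambient module $\cD_Y\prod_{j}\delta(t_j-f_j)$,
$$ V^\bm D_Y\cdot\prod_{j=1}^r\delta(t_j-f_j) \;=\; \Phi\!\left(\cD_X[s]\prod_{j=1}^r f_j^{s_j+m_j}\right), $$
with the case $\bm = \bsy 0$ giving in particular $V^{\bsy 0}D_Y\prod_j\delta = \Phi(\cD_X[s]\prod_j f_j^{s_j})$.

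For the inclusion $\subset$, I factor any generator $t^\beta\pa^\gamma$ with $\beta-\gamma\geq\bm$ as $t^{\beta-\gamma}\cdot t^\gamma\pa^\gamma$ and use the one-variable identity $t^k\pa^k = (-1)^k(s+1)(s+2)\cdots(s+k)$, which follows from $[\pa,t]=1$ and $s=-\pa t$, to rewrite $t^\beta\pa^\gamma = t^{\beta-\gamma}\,R(-\pa t\, t)$ for some $R\in\bC[s_1,\ldots,s_r]$. The defining relation $(t_j-f_j)\prod_k\delta(t_k-f_k)=0$ then gives $t^{\beta-\gamma}\prod\delta = f^{\beta-\gamma}\prod\delta$, and since $f^{\beta-\gamma} = f^\bm\cdot f^{\beta-\gamma-\bm}\in f^\bm\cdot\cD_X$, the element lies in $\cD_X[s]\cdot f^\bm\prod\delta = \Phi(\cD_X[s]\prod f_j^{s_j+m_j})$. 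For the inclusion $\supset$, I will use $t^\bm\in V^\bm D_Y$, $-\pa_{t_j}t_j\in V^{\bsy 0}D_Y$, and the multiplicativity $V^{\bsy 0}D_Y\cdot V^\bm D_Y\subset V^\bm D_Y$ of the $V$-filtration: for any $P\in\cD_X[s]$, the element $\Phi(P(s)\prod f_j^{s_j+\bm}) = P(-\pa t\, t)\cdot t^\bm\prod\delta$ lies in $V^\bm D_Y\prod\delta$.

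With these identifications in hand, the containment in the statement of the proposition translates to
$$ b(s)\cdot\cD_X[s]\prod_j f_j^{s_j}\;\subset\; \cD_X[s]\prod_j f_j^{s_j+m_j}. $$
Since $b\in\bC[s_1,\ldots,s_r]$ is central in $\cD_X[s]$, this containment of left $\cD_X[s]$-submodules is equivalent to the membership of the single generator $b(s)\prod_j f_j^{s_j}\in \cD_X[s]\prod_j f_j^{s_j+m_j}$, which unravels to the existence of $P\in\cD_X[s]$ with $b\prod_j f_j^{s_j} = P\prod_j f_j^{s_j+m_j}$, i.e., exactly the defining condition $b\in B_F^\bm$.

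The main obstacle is the bookkeeping in the identification of $V^\bm D_Y\prod\delta$ with $\Phi(\cD_X[s]\prod f_j^{s_j+\bm})$, in particular keeping track of how $t^{\beta-\gamma}$ extracts a factor $f^\bm$ after being absorbed through the $\delta$ relations; but this is a direct consequence of the commutation relations in $D_Y$ and the multiplicativity of the $V$-filtration, with the real content already encoded in Lemma~\ref{lemCors}.
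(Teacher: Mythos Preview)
Your proposal is correct and follows exactly the route the paper indicates: the paper simply asserts that Proposition~\ref{corDI} ``is a consequence of Lemma~\ref{lemCors}'' without further details, and what you have written is a careful unpacking of precisely that consequence---identifying $V^{\bm}D_Y\cdot\prod_j\delta(t_j-f_j)$ with the image under $\Phi$ of $D_X[s]\prod_j f_j^{s_j+m_j}$ and then reading off the defining condition for $B_F^{\bm}$. Two minor cosmetic points: you should write $D_X$ (the Weyl algebra) rather than $\cD_X$ to match the statement and the global definition of $B_F^{\bm}$; and in the $\subset$ direction it would be cleaner to note explicitly that commuting $t^{\beta-\gamma}$ past $R(s)$ (equivalently, using that the $t_j$-action corresponds to the shift $s_j\mapsto s_j+1$) produces a shifted polynomial $R(s+\beta-\gamma)$, after which the factor $f^{\beta-\gamma}=f^{\bm}f^{\beta-\gamma-\bm}$ is extracted as you say.
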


The next result unveils somewhat the structure of the Bernstein-Sato ideals. This can be used in practice to compute $B_F$ in cases where the current implementations do not work. It also explains to some extent the nature of the mysterious shifts which appear in Bernstein-Sato ideals.

\begin{thm}\label{thmBSDec} Let $\bm\in\bN^r$. For $j=1,\ldots , r$, let $t_j$ be the ring isomorphism of $\bC[s_1,\ldots ,s_r]$ defined by $t_j(s_i)=s_i+\delta_{ij}$. Then there are inclusions of ideals in $\bC[s_1,\ldots,s_r]$
$$
\prod_{\substack{1\le j\le r\\ m_j>0}}\prod_{k=0}^{m_j-1} t_1^{m_1}\ldots t_{j-1}^{m_{j-1}}t_j^k \,B_F^{\be_j}\subset B_F^\bm \subset \bigcap_{\substack{1\le j\le r\\ m_j>0}}\bigcap_{k=0}^{m_j-1} t_1^{m_1}\ldots t_{j-1}^{m_{j-1}}t_j^k \, B_F^{\be_j}.
$$
\end{thm}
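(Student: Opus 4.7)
The strategy is to unwind both inclusions directly from the defining relations inside $\cD_X[s]\prod f_i^{s_i}$, exploiting two facts: first, that the $s_j$ are central parameters commuting with $\cD_X$, so any polynomial in $s$ commutes with every differential operator; second, that the shift $s\mapsto s+\alpha$ for $\alpha\in\bZ^r$ is realized as a $D_X$-module isomorphism of the ambient module $\cO_X[(\prod f_i)^{-1},s]\prod f_i^{s_i}$, or equivalently as multiplication by $t^\alpha$ in the Weyl algebra picture of Lemma \ref{lemCors}.

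For the left inclusion I plan to prove, by induction, the following composition lemma: if $b\in B_F^\bm$ and $c\in B_F^{\be_j}$, then $c(s+\bm)\cdot b(s)\in B_F^{\bm+\be_j}$. Starting from $b(s)\prod f_i^{s_i}=P\prod f_i^{s_i+\bm}$ together with the shifted relation $c(s+\bm)\prod f_i^{s_i+\bm}=Q(s+\bm)f_j\prod f_i^{s_i+\bm}$, multiplying the first on the left by the polynomial $c(s+\bm)$---which commutes with $P$---gives $c(s+\bm)b(s)\prod f_i^{s_i}=PQ(s+\bm)\prod f_i^{s_i+\bm+\be_j}$. Iterating the lemma from $1\in B_F^{\bsy{0}}$, adding $\be_1$ exactly $m_1$ times, then $\be_2$ exactly $m_2$ times, and so on, at the step of adding the $(k+1)$-st copy of $\be_j$ the accumulated shift is precisely $m_1\be_1+\cdots+m_{j-1}\be_{j-1}+k\be_j$, which produces the desired product on the left-hand side.

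For the right inclusion, given $b\in B_F^\bm$, fix $j$ with $m_j>0$ and $0\le k\le m_j-1$, and set $\alpha:=m_1\be_1+\cdots+m_{j-1}\be_{j-1}+k\be_j$. I will show that $c(s):=b(s-\alpha)=(t^{-\alpha}b)(s)$ lies in $B_F^{\be_j}$, equivalent to $b\in t^\alpha\,B_F^{\be_j}$. Applying the shift $s\mapsto s-\alpha$ to $b(s)\prod f_i^{s_i}=P(s)\prod f_i^{s_i+\bm}$ yields $b(s-\alpha)\prod f_i^{s_i-\alpha_i}=P(s-\alpha)\prod f_i^{s_i+m_i-\alpha_i}$. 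Multiplying on the left by $\prod f_i^{\alpha_i}$, which commutes with $b(s-\alpha)$, and using that $m_i-\alpha_i=0$ for $i<j$, $m_j-k\ge 1$ for $i=j$, and $m_i\ge 0$ for $i>j$, the identity factors as $b(s-\alpha)\prod f_i^{s_i}=\bigl[\prod f_i^{\alpha_i}P(s-\alpha)f_j^{m_j-k-1}\prod_{i>j}f_i^{m_i}\bigr]\cdot f_j\cdot\prod f_i^{s_i}$ with the bracketed expression in $\cD_X[s]$, exhibiting $c\in B_F^{\be_j}$.

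The main obstacle, more conceptual than technical, is to justify the shift substitution $s\mapsto s-\alpha$ rigorously, since $\prod f_i^{s_i-\alpha_i}$ does not literally lie in the cyclic module $\cD_X[s]\prod f_i^{s_i}$. The cleanest remedy is to run the computation inside the ambient $\cD_X[s]$-module $\cO_X[(\prod f_i)^{-1},s]\prod f_i^{s_i}$, where the shift is a genuine $D_X$-module automorphism and all intermediate sections are well-defined; multiplying by $\prod f_i^{\alpha_i}$ afterwards returns everything to the original cyclic module. Alternatively, one can transport the entire argument to the Weyl algebra $D_Y$ with $Y=X\times\bC^r$ via Lemma \ref{lemCors}, where $s\to s-\alpha$ becomes simply left multiplication by the monomial $t^\alpha$. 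Once this setup is in place, both inclusions reduce to the elementary polynomial bookkeeping sketched above.
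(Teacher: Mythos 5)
Your proof is correct and takes essentially the same route as the paper: your composition lemma is precisely the case $\bn=\be_j$ of the paper's Lemma \ref{lemDec}, and your one-step verification that $b\in t^\alpha B_F^{\be_j}$ merely unrolls the iterated application of the second inclusion of that lemma, using the identical moves (shift $s\mapsto s-\alpha$ followed by left-multiplication by $\prod_i f_i^{\alpha_i}$). Your concern about making the shift rigorous inside the cyclic module is a fair one, and your remedy---working in $\cO_X[f^{-1},s]\prod_i f_i^{s_i}$ or passing to the Weyl algebra via Lemma \ref{lemCors}---matches the framework the paper relies on implicitly (cf.\ Proposition \ref{corDI}).
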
 
Here $\delta_{ij}=0$ if $i\ne j$, and $\delta_{ii}=1$. Also, we denote by $\be_j$ the $r$-tuple with the $k$-th entry $\delta_{jk}$. By convention, $t_j^0$ is the identity map, the product map $t_1^{a_1}\ldots t_r^{a_r}$ means the obvious composition of maps, and $t_1^{a_1}\ldots t_r^{a_r} I$ is the image of the ideal $I$ under this product map.

The first inclusion in Theorem \ref{thmBSDec} can be strict, see Examples \ref{exaFI} and \ref{exaNonP}. We do not know examples for which the second inclusion is strict, raising the obvious question if equality holds in general.

The radical of product of ideals equals the radical of the intersection of the ideal. Thus, letting $V(I)$ denote the zero locus of an ideal $I\subset\bC[s_1,\ldots,s_r]$ in $\bC^n$, and writing $t_j$ also for the corresponding linear map of $\bC^n$, we obtain:

\begin{prop}\label{propVBDec} With the notation as above,
$$V(B_F^\bm)=\bigcup _{\substack{1\le j\le r\\ m_j>0}}\bigcup_{k=0}^{m_j-1} t_1^{m_1}\ldots t_{j-1}^{m_{j-1}}t_j^k\, V(B_F^{\be_j}).$$
\end{prop}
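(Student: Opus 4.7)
The plan is to derive the proposition directly from Theorem \ref{thmBSDec} by applying the zero-locus operator $V(\cdot)$ to the chain of inclusions
\[
\prod_{\substack{1\le j\le r\\ m_j>0}}\prod_{k=0}^{m_j-1} t_1^{m_1}\ldots t_{j-1}^{m_{j-1}}t_j^k\, B_F^{\be_j}\;\subset\; B_F^{\bm}\;\subset\; \bigcap_{\substack{1\le j\le r\\ m_j>0}}\bigcap_{k=0}^{m_j-1} t_1^{m_1}\ldots t_{j-1}^{m_{j-1}}t_j^k\, B_F^{\be_j},
\]
and exploiting the fact that $V(\cdot)$ reverses inclusions and collapses both finite products and finite intersections of ideals into finite unions of their varieties.

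First I would recall the standard identities: for any finite collection of ideals $\{I_\alpha\}$ in $\bC[s_1,\ldots,s_r]$,
\[
V\!\left(\prod_\alpha I_\alpha\right)=V\!\left(\bigcap_\alpha I_\alpha\right)=\bigcup_\alpha V(I_\alpha).
\]
Applying these to the leftmost product and to the rightmost intersection in Theorem \ref{thmBSDec}, both yield the same subset of $\bC^r$, namely
\[
\bigcup_{\substack{1\le j\le r\\ m_j>0}}\bigcup_{k=0}^{m_j-1} V\!\bigl(t_1^{m_1}\ldots t_{j-1}^{m_{j-1}}t_j^k\, B_F^{\be_j}\bigr).
\]
Since $V(\cdot)$ is inclusion-reversing, the middle term $V(B_F^{\bm})$ is sandwiched between two sets that coincide, hence equals their common value. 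This is the whole content of the argument; no further commutative algebra is needed.

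The only bookkeeping step is to verify that $V$ converts the ring automorphism $t_j\colon s_i\mapsto s_i+\delta_{ij}$ on $\bC[s_1,\ldots,s_r]$ into the corresponding affine-linear map on $\bC^r$ in the sense stated by the proposition. For a single factor $I$ and $t_j$, an element $\alpha\in\bC^r$ lies in $V(t_j(I))$ iff $b(\alpha+\be_j)=0$ for all $b\in I$, i.e.\ iff $\alpha+\be_j\in V(I)$. Writing $t_j$ also for the linear map $\alpha\mapsto \alpha-\be_j$ of $\bC^r$ (the convention tacitly fixed in the statement), this says $V(t_j(I))=t_j\,V(I)$, and the claim extends to arbitrary compositions $t_1^{m_1}\ldots t_{j-1}^{m_{j-1}}t_j^k$ by induction.

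There is no serious obstacle here: the proposition is essentially a formal translation of Theorem \ref{thmBSDec} through the functor $V(\cdot)$, with the only subtlety being the consistent interpretation of the symbols $t_j$ as simultaneously acting on rings of polynomials and on the affine space $\bC^r$.
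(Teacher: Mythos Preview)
Your proof is correct and follows the same approach as the paper: apply $V(\cdot)$ to the two-sided inclusion of Theorem~\ref{thmBSDec}, use that $V$ of a finite product and of a finite intersection both equal the union of the $V$'s, and conclude by sandwiching. The paper's own justification is the single sentence ``the radical of product of ideals equals the radical of the intersection of the ideal'' together with the convention that $t_j$ also denotes the corresponding linear map on $\bC^r$; you have simply spelled out this convention more carefully.
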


Theorem \ref{thmBSDec} is a consequence of the following result, in which we will use the notation $\bt^\bm=\prod_{j=1}^rt_j^{m_j}$.

\begin{lem}\label{lemDec} Let $\bm, \bn\in \bN^r$. Then there are inclusions of ideals
$$
B_F^\bm\cdot (\bt^\bm\, B_F^\bn)\subset B_F^{\bm+\bn}\subset B_F^\bm \cap (\bt^\bm\, B_F^\bn).
$$
\end{lem}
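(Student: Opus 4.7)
My plan is to work directly with the defining relations, carrying them out inside the universal $\cD_X[s_1,\ldots,s_r]$-module $\cM:=\cD_X[s][\prod_j f_j^{-1}]\cdot\prod_j f_j^{s_j}$. The only symmetry I need is that for any $\mathbf{k}\in\bZ^r$, the substitution $s_j\mapsto s_j+k_j$ extends to a $\cD_X$-linear automorphism of $\cM$ that sends $\prod_j f_j^{s_j}$ to $\prod_j f_j^{s_j+k_j}=\left(\prod_j f_j^{k_j}\right)\cdot\prod_j f_j^{s_j}$. Consequently, any defining identity $b(s)\prod_j f_j^{s_j}=P\prod_j f_j^{s_j+\bm}$ with $P\in\cD_X[s]$ remains valid after replacing $s$ by $s+\mathbf{k}$ throughout. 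This is the only nontrivial ingredient.

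For the first inclusion $B_F^{\bm}\cdot(\bt^{\bm}B_F^{\bn})\subset B_F^{\bm+\bn}$, I would take generators $b_1\in B_F^{\bm}$ and $b_2\in B_F^{\bn}$ witnessed by $b_1(s)\prod_j f_j^{s_j}=P\prod_j f_j^{s_j+m_j}$ and $b_2(s)\prod_j f_j^{s_j}=Q\prod_j f_j^{s_j+n_j}$. Applying the shift $s\mapsto s+\bm$ to the second relation yields $b_2(s+\bm)\prod_j f_j^{s_j+m_j}=Q(s+\bm)\prod_j f_j^{s_j+m_j+n_j}$. Since $(\bt^{\bm}b_2)(s)=b_2(s+\bm)$ is central in $\cD_X[s]$, multiplying the first relation by it and then substituting gives $b_1(s)(\bt^{\bm}b_2)(s)\prod_j f_j^{s_j}=P\,Q(s+\bm)\prod_j f_j^{s_j+m_j+n_j}$, so $b_1\cdot(\bt^{\bm}b_2)\in B_F^{\bm+\bn}$, and $\bC[s]$-bilinearity closes the product inclusion.

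For the second inclusion $B_F^{\bm+\bn}\subset B_F^{\bm}\cap(\bt^{\bm}B_F^{\bn})$, I would pick $b\in B_F^{\bm+\bn}$ witnessed by $b(s)\prod_j f_j^{s_j}=R\prod_j f_j^{s_j+m_j+n_j}$. To get $b\in B_F^{\bm}$, I would factor $\prod_j f_j^{s_j+m_j+n_j}=(\prod_j f_j^{n_j})\prod_j f_j^{s_j+m_j}$ and absorb $\prod_j f_j^{n_j}$ into the operator, giving $b(s)\prod_j f_j^{s_j}=\bigl(R\cdot\prod_j f_j^{n_j}\bigr)\prod_j f_j^{s_j+m_j}$. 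To get $b\in\bt^{\bm}B_F^{\bn}$, I would shift $s\mapsto s-\bm$ in the original identity to obtain $b(s-\bm)\prod_j f_j^{s_j-m_j}=R(s-\bm)\prod_j f_j^{s_j+n_j}$, and then apply $\prod_j f_j^{m_j}$ on the left inside $\cM$ to deduce $b(s-\bm)\prod_j f_j^{s_j}=\bigl(\prod_j f_j^{m_j}\cdot R(s-\bm)\bigr)\prod_j f_j^{s_j+n_j}$, which exhibits $b(s-\bm)\in B_F^{\bn}$. Hence $b=\bt^{\bm}(b(s-\bm))\in\bt^{\bm}B_F^{\bn}$.

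There is no real obstacle; the content is essentially the shift invariance $s\mapsto s+\mathbf{k}$ of the defining identities, which is routine once one works in $\cM$ or in the Malgrange-style module of Lemma \ref{lemCors}. All the rest is formal manipulation of the defining relations, relying on the centrality of $\bC[s]\subset\cD_X[s]$ to freely move polynomial factors past differential operators. Neither Sabbah's existence theorem for Bernstein-Sato ideals nor the geometric material of the earlier sections is used, which fits the purely algebraic flavour of the lemma.
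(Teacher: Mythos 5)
Your proof is correct and follows essentially the same route as the paper's: both inclusions rest on the same three moves (shifting $s\mapsto s\pm\bm$ in a defining identity, absorbing the polynomial factor $\prod_j f_j^{n_j}$ into the operator, and left-multiplying by $\prod_j f_j^{m_j}$), together with the centrality of $\bC[s]$ in $\cD_X[s]$. The only differences from the paper's argument are a reordering of which operation (shift vs.\ multiplication by $\bff^\bm$) comes first and the slightly more explicit framing via the module $\cM$, neither of which changes the substance.
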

\begin{proof} We will use the notation $\bff^\bs=\prod_{j=1}^rf_j^{s_j}$.
Let  $b_1\in B_F^\bm$ and $b_2\in B_F^\bn$. Write $b_1\bff^\bs=P_1\bff^{\bs+\bm}$ and $b_2\bff^\bs=P_2\bff^{\bs+\bn}$ for some $P_1$ and $P_2$ in $D_X[s_1,\ldots,s_r]$. Apply $\bt^\bm$ to both sides of $b_2\bff^\bs=P_2\bff^{\bs+\bn}$. We obtain then that $(\bt^\bm b_2)\bff^{\bs+\bm}=(\bt^\bm P_2)\bff^{\bs+\bm+\bn}$. Applying $P_1$ on the left on both sides of the equality, we have
$$
P_1(\bt^\bm P_2)\bff^{\bs+\bm+\bn}=P_1(\bt^\bm b_2)\bff^{\bs+\bm}=(\bt^\bm b_2)P_1\bff^{\bs+\bm}=(\bt^\bm b_2)b_1\bff^{\bs}.
$$
Thus $b_1(\bt^\bm b_2)$ is in $B_F^{\bm+\bn}$, which implies the first inclusion.

Take now $b\in B_F^{\bm+\bn}$. Write $b\bff^\bs=P\bff^{\bs+\bm+\bn}$ for some $P$ in $D_X[s_1,\ldots,s_r]$. Then $b\bff^\bs=P\bff^\bn\bff^{\bs+\bm}$, so  $b\in B_F^\bm$. Now, multiply by $\bff^\bm$ on the left on both sides of the last equality. We obtain that $b\bff^{\bs+\bm}=\bff^mP\bff^{\bs+\bm+\bn}$. This shows that $b\in \bt^\bm B_F^\bn$. Hence $b\in B_F^\bm\cap (\bt^\bm B_F^\bn)$, which proves the second inclusion.
\end{proof}

\noindent
{\it Proof of Theorem \ref{thmBSDec}.} Write $\bm=(m_1,0,\ldots, 0)+(0,m_2,\ldots,m_r)$ and apply Lemma \ref{lemDec} to obtain that $B_F^\bm$ is squeezed between the product and the intersection of the ideals $B_F^{m_1\be_1}$ and  $\bt^{m_1\be_1}B_F^{(0,m_2,\ldots,m_r)}$. Repeat the procedure with  $B_F^{(0,m_2,\ldots,m_r)}$ and so on. It remains to squeeze the ideals $B_F^{m_j\be_j}$, for which there is the obvious procedure: write $m_j\be_j=(m_j-1)\be_j+\be_j$, use Lemma \ref{lemDec}, and repeat. $\Box$

\begin{rem}
Note that one can choose different decompositions of $\bm$ than the ones used in the proof of Theorem \ref{thmBSDec}. In particular, if $\pi$ is any permutation of $\{1,\ldots ,r\}$, there are inclusions
$$
\cP_{F,\bm,\pi}:=\prod_{\substack{1\le j\le r\\ m_{\pi(j)}>0}}\prod_{k=0}^{m_{\pi(j)}-1} t_{\pi(1)}^{m_{\pi(1)}}\ldots t_{\pi(j-1)}^{m_{\pi(j-1)}}t_{\pi(j)}^k \,B_F^{\be_{\pi(j)}}\subset B_F^\bm$$
and
$$
B_F^\bm\subset \bigcap_{\substack{1\le j\le r\\ m_{\pi(j)}>0}}\bigcap_{k=0}^{m_{\pi(j)}-1} t_{\pi(1)}^{m_{\pi(1)}}\ldots t_{\pi(j-1)}^{m_{\pi(j-1)}}t_{\pi(j)}^k \,B_F^{\be_{\pi(j)}}=:\cI_{F,\bm,\pi}.
$$
Again, we do not know an example where $B_F^\bm\ne\cI_{F,\bm,\pi}$. The ideals $\cP_{F,\bm,\pi}$ could be different for different permutations $\pi$, see Example \ref{exaNonP}. Hence we the record the following as a strengthening of Theorem \ref{thmBSDec} and Proposition \ref{propVBDec}.
\end{rem}

\begin{prop}\label{propSTR}
With the notation as above, there is an inclusion of ideals
$$
\sum_{\pi}\cP_{F,\bm,\pi}\subset B_F^\bm \subset \bigcap_\pi \cI_{F,\bm,\pi}
$$
where $\pi$ ranges over all permutations of $\{1,\ldots, r\}$. Hence, there is an equality of zero loci
$$
V(B_F^\bm)=\bigcup_{\substack{1\le j\le r\\ m_{\pi(j)}>0}}\bigcup_{k=0}^{m_{\pi(j)}-1} t_{\pi(1)}^{m_{\pi(1)}}\ldots t_{\pi(j-1)}^{m_{\pi(j-1)}}t_{\pi(j)}^k \,V(B_F^{\be_{\pi(j)}}).
$$
\end{prop}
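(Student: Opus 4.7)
The plan is to observe that the iterative application of Lemma~\ref{lemDec} used in the proof of Theorem~\ref{thmBSDec} depends on an arbitrary choice of order in which one peels coordinates off $\bm\in\bN^r$, so that the same argument carries through for any permutation $\pi$ of $\{1,\ldots,r\}$.

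First, I would fix a permutation $\pi$ and repeat the proof of Theorem~\ref{thmBSDec} verbatim, but decompose $\bm$ by first separating $m_{\pi(1)}\be_{\pi(1)}$, then $m_{\pi(2)}\be_{\pi(2)}$, and so on down to $m_{\pi(r)}\be_{\pi(r)}$, rather than peeling off the coordinate $1$ first. Lemma~\ref{lemDec} is symmetric in the decomposition $\bm=\bm'+\bm''$, so the induction goes through unchanged and produces
$$\cP_{F,\bm,\pi} \;\subset\; B_F^\bm \;\subset\; \cI_{F,\bm,\pi}$$
for each individual $\pi$.

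Second, summing the left inclusions and intersecting the right inclusions over all $\pi$ yields the two-sided inclusion of ideals stated in the proposition. For the zero-locus equality, I would either apply the radical functor to the above inclusions or, more directly, mimic the derivation of Proposition~\ref{propVBDec} with the coordinate order $(1,\ldots,r)$ replaced by $(\pi(1),\ldots,\pi(r))$: since $V(I\cdot J)=V(I\cap J)=V(I)\cup V(J)$, both $\cP_{F,\bm,\pi}$ and $\cI_{F,\bm,\pi}$ have the same zero locus, namely the claimed union of shifted zero loci $t_{\pi(1)}^{m_{\pi(1)}}\cdots t_{\pi(j-1)}^{m_{\pi(j-1)}}t_{\pi(j)}^k\,V(B_F^{\be_{\pi(j)}})$, which is therefore sandwiched equal to $V(B_F^\bm)$.

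There is essentially no obstacle here: all the real work was already done in Lemma~\ref{lemDec}, and the present proposition is the bookkeeping observation that the ordering in which one splits $\bm$ into pieces is a free choice, so the conclusions of Theorem~\ref{thmBSDec} and Proposition~\ref{propVBDec} can be strengthened by ranging over all permutations $\pi$ simultaneously.
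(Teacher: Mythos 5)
Your proposal is correct and follows exactly the route the paper takes: the paper records this Proposition immediately after a Remark whose content is precisely your observation that the proof of Theorem~\ref{thmBSDec} (and hence of Proposition~\ref{propVBDec}) depends only on an arbitrary choice of order in which $\bm$ is decomposed via Lemma~\ref{lemDec}, so one may range over all permutations. The bookkeeping for the zero loci via $V(I\cdot J)=V(I\cap J)=V(I)\cup V(J)$ is also the same.
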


\begin{exa}
Let $F=(f)$ for a polynomial $f$. This is the case $r=1$. Let $\bm=(m)$ with $m\in\bN$. Then $B_F^\bm=\id{b_{f^m}(s/m)}$, where $b_{f^m}(s)$ is the classical Bernstein-Sato polynomial of $f^m$. Thus Theorem \ref{thmBSDec} states in this case that $\prod_{j=0}^{m-1}b_f(s+j)$ is divisible by $b_{f^m}(s/m)$ which, in turn, is divisible by the lowest common multiple $lcm\{b_f(s+j)\mid j=0,\ldots ,m-1\}$.
\end{exa}

\begin{exa}\label{exaFI} In the previous example, let $f=x^4-y^2z^2$. Then $$b_f(s)=(s+1)^3(4s+3)^2(4s+5)^2(2s+3)(4s+7).$$ Thus,
$$
lcm\{b_f(s),b_f(s+1)\}=\frac{b_f(s)b_f(s+1)}{4s+7}.
$$
One also computes that the right-hand side equals $b_{f^2}(s/2)$. Hence the first inclusion in Theorem \ref{thmBSDec} is strict in this case.
\end{exa}

We write next a few immediate consequences of Theorem \ref{thmBSDec} and
Proposition \ref{propVBDec}.

\begin{lem}\label{lemStr}
 Let $\bm\in\bN^r$. Then there is an equality 
$$\Exp(V(B_F^\bm))= \bigcup_{j=1}^r \Exp(V(B^{m_j\be_j}_F)).$$ 
In particular, there is a decomposition
$$\Exp(V(B_F))= \bigcup_{j=1}^r \Exp(V(B^{\be_j}_F)).
$$
\end{lem}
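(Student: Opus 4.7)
The plan is to combine Proposition \ref{propVBDec} with the periodicity of $\Exp$ under integer translations. Since $\Exp(s)_k = e^{2\pi i s_k}$, we have $\Exp(s + \bsy{a}) = \Exp(s)$ for every $s \in \bC^r$ and $\bsy{a} \in \bZ^r$; equivalently, for any composition $\tau$ of the shift maps $t_i$ and any subset $Z \subset \bC^r$, we have $\Exp(\tau(Z)) = \Exp(Z)$.

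Applying $\Exp$ to the decomposition of $V(B_F^\bm)$ in Proposition \ref{propVBDec} and using this invariance, every shifted piece $t_1^{m_1}\cdots t_{j-1}^{m_{j-1}}t_j^k\, V(B_F^{\be_j})$ has the same image as $V(B_F^{\be_j})$, so
\[
\Exp(V(B_F^\bm)) \;=\; \bigcup_{\substack{1\le j\le r\\ m_j>0}} \Exp(V(B_F^{\be_j})).
\]
The same argument applied with $\bm$ replaced by $m_j\be_j$ (for $j$ with $m_j>0$) gives $V(B_F^{m_j\be_j}) = \bigcup_{k=0}^{m_j-1} t_j^k V(B_F^{\be_j})$, hence $\Exp(V(B_F^{m_j\be_j})) = \Exp(V(B_F^{\be_j}))$. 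For $j$ with $m_j = 0$, the multi-index $m_j\be_j$ is zero, and $B_F^{\bsy 0}$ is the unit ideal (take $b = P_1 = 1$ in the defining identity), so $V(B_F^{m_j\be_j}) = \emptyset$. Inserting these vacuous terms yields
\[
\Exp(V(B_F^\bm)) \;=\; \bigcup_{j=1}^r \Exp(V(B_F^{m_j\be_j})).
\]
The ``in particular'' statement is the special case $\bm = \bone$, where $m_j\be_j = \be_j$ and $B_F^\bm = B_F$.

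There is no serious obstacle here: the proof is pure bookkeeping, built on Proposition \ref{propVBDec} and the invariance of $\Exp$ under integer shifts. The only minor point to watch is the handling of indices with $m_j = 0$, which is dealt with by noting that $B_F^{\bsy 0}$ is the unit ideal and contributes no points.
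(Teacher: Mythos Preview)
Your proof is correct and follows exactly the approach the paper intends: the paper simply records this lemma as an ``immediate consequence'' of Theorem \ref{thmBSDec} and Proposition \ref{propVBDec}, and you have spelled out precisely that deduction, including the invariance of $\Exp$ under the integer shifts $t_j$ and the harmless handling of the indices with $m_j=0$ via $B_F^{\bsy 0}$ being the unit ideal.
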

\begin{exa}
The last lemma does not necessarily hold without exponentiating. For example, let $F=(xy, (1-x)y)$. Then one can compute with {\tt dmod.lib} \cite{SingDmod} that $$B_F=\id{(s_1+1)(s_2+1)(s_1+s_2+1)(s_1+s_2+2)},$$ but $B_F^{\be_1}=\id{(s_1+1)(s_1+s_2+1)}$ and $B_F^{\be_2}=\id{(s_2+1)(s_1+s_2+1)}$. Hence the component $V(s_1+s_2+2)$ of $V(B_F)$ does not show up in either $V(B_F^{\be_j})$.

\end{exa}

\begin{lem}\label{lemm} If $m\in \bN$ is nonzero, then 
$$
\Exp (V(B_F^{m\be_j}))=\Exp (V(B_F^{\be_j}))
$$
for $1\le j \le r$.
\end{lem}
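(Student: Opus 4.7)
The plan is to apply Proposition \ref{propVBDec} in the special case $\bm = m\be_j$ and then exploit the periodicity of $\Exp$ under integer translations.

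Concretely, Proposition \ref{propVBDec} yields the decomposition
$$V(B_F^{m\be_j}) = \bigcup_{k=0}^{m-1} t_j^k \, V(B_F^{\be_j}),$$
since all the other indices contribute nothing (the condition $m_i > 0$ fails for $i \ne j$). Here $t_j$ acts on $\bC^r$ as the translation sending $s_j$ to $s_j - 1$ and fixing all other coordinates (so that it is dual to the ring automorphism $s_j \mapsto s_j+1$ used in Theorem \ref{thmBSDec}). Taking $\Exp$ of both sides gives
$$\Exp(V(B_F^{m\be_j})) = \bigcup_{k=0}^{m-1} \Exp\bigl(t_j^k \, V(B_F^{\be_j})\bigr).$$

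The key observation is that $\Exp(s_1,\ldots,s_r) = (e^{2\pi i s_1},\ldots, e^{2\pi i s_r})$ is invariant under the translation by any integer vector; in particular $\Exp \circ t_j^k = \Exp$ for every $k \in \bZ$. Hence each term on the right-hand side equals $\Exp(V(B_F^{\be_j}))$, and the union collapses to a single copy, giving the desired equality. No technical obstacle is expected, since the statement is a direct, formal consequence of the previously established structural decomposition together with the $\bZ^r$-periodicity of $\Exp$.
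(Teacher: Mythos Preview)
Your proof is correct and is exactly the argument the paper intends: the lemma is stated there as an immediate consequence of Theorem~\ref{thmBSDec} and Proposition~\ref{propVBDec}, and you have simply spelled out that consequence by specializing to $\bm=m\be_j$ and using the $\bZ^r$-periodicity of $\Exp$.
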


\begin{exa} The last lemma does not necessarily hold without exponentiating. Let $r=1$, $F=(x)$, and $m=2$. Then $B_F^{\be_1}=B_F=\id{s+1}$, but $B^{2\be_1}_F=\id{(s+1)(s+2)}$.
\end{exa}

\begin{lem}\label{lemExpEq}  If $\bm=(m_j), \bm'=(m'_j) \in\bN^r$ such that $m_j=0$ if and only if $m'_j=0$ for all $1\le j\le r$, then
$$\Exp (V(B_F^\bm))=\Exp (V(B_F^{\bm'}))=\bigcup_{j:m_j\ne 0}\Exp (V(B_F^{\be_j})).$$
\end{lem}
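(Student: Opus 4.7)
The plan is to deduce this immediately from the two preceding lemmas, Lemma \ref{lemStr} and Lemma \ref{lemm}. Indeed, the statement is essentially a bookkeeping consequence of the decomposition of $\Exp(V(B_F^\bm))$ into contributions indexed by coordinates, combined with the invariance of each coordinate-wise contribution under positive rescaling of its multiplicity.

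First I would apply Lemma \ref{lemStr} to both $\bm$ and $\bm'$ to obtain
\[
\Exp(V(B_F^\bm)) = \bigcup_{j=1}^r \Exp(V(B_F^{m_j\be_j})), \qquad \Exp(V(B_F^{\bm'})) = \bigcup_{j=1}^r \Exp(V(B_F^{m'_j\be_j})).
\]
The terms indexed by $j$ with $m_j = 0$ are trivial: if $m_j = 0$ then $m_j\be_j = \boldsymbol{0}$, and $B_F^{\boldsymbol{0}} = \bC[s_1,\ldots,s_r]$ (take $b=1$, $P = 1$ in the defining equation), so $V(B_F^{m_j\be_j}) = \emptyset$. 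Hence each union reduces to a union over the set of indices $j$ with $m_j \ne 0$ (respectively $m'_j \ne 0$), and by hypothesis these two index sets coincide.

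Next I would apply Lemma \ref{lemm} to each surviving term. Since $m_j \in \bN$ is nonzero (and likewise $m'_j$), Lemma \ref{lemm} gives
\[
\Exp(V(B_F^{m_j\be_j})) = \Exp(V(B_F^{\be_j})) = \Exp(V(B_F^{m'_j\be_j})).
\]
Substituting back yields
\[
\Exp(V(B_F^\bm)) \;=\; \bigcup_{j:\, m_j\ne 0} \Exp(V(B_F^{\be_j})) \;=\; \Exp(V(B_F^{\bm'})),
\]
which is exactly the claimed equality, and also exhibits the common value as the union $\bigcup_{j:\, m_j\ne 0}\Exp(V(B_F^{\be_j}))$.

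There is really no obstacle here: the content has been placed in Lemmas \ref{lemStr} and \ref{lemm}, and this lemma is a formal consequence. The only subtlety worth noting explicitly in the write-up is the convention that $B_F^{\boldsymbol{0}}$ is the unit ideal so that the trivial coordinates drop out of the union in Lemma \ref{lemStr}; once that is observed, the proof is a one-line chain of equalities.
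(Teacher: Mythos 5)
Your proof is correct and follows precisely the route the paper intends: the paper groups Lemmas \ref{lemStr}, \ref{lemm}, and \ref{lemExpEq} together as ``immediate consequences'' of Theorem \ref{thmBSDec} and Proposition \ref{propVBDec} without supplying explicit arguments, and your derivation of \ref{lemExpEq} from \ref{lemStr} and \ref{lemm} is exactly the intended chain. Your explicit observation that $B_F^{\boldsymbol{0}}$ is the unit ideal (so the $j$ with $m_j=0$ drop out of the union) is a worthwhile detail to record, as the paper leaves it implicit.
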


The following will not be used, but we state it for clarification purposes and it follows from the definition.

\begin{lem}
For a matrix $M\in \bN^{p\times r}$ with row vectors $\bm_k$ for $1\le k\le p$,
\begin{align*}
B_{F}^{\,M} \supset \sum_{k=1}^p B^{\bm_k}_{F},\quad V(B_{F}^{\,M})\subset\bigcap_{k=1}^p V(B^{\bm_k}_{F}).\end{align*}
\end{lem}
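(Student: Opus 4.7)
The plan is to derive both statements directly from the definitions, with essentially no intermediate machinery required.

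First I would establish the inclusion $B_F^{\bm_k}\subset B_F^M$ for each $k=1,\ldots,p$. If $b\in B_F^{\bm_k}$, then by definition there is an operator $P\in D_X[s_1,\ldots,s_r]$ with
\[
b(s_1,\ldots,s_r)\prod_{j=1}^r f_j^{s_j} \;=\; P\prod_{j=1}^r f_j^{s_j+m_{kj}}.
\]
Setting $P_k:=P$ and $P_i:=0$ for $i\ne k$ exhibits $b$ as an element of $B_F^M$, because the defining relation for $B_F^M$ allows an arbitrary $p$-term sum on the right-hand side (with no positivity or nonvanishing constraints on the $P_i$).

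Since $B_F^M$ is an ideal, the inclusions $B_F^{\bm_k}\subset B_F^M$ for each $k$ combine to give the first claim $\sum_{k=1}^p B_F^{\bm_k}\subset B_F^M$. For concreteness, given $b_k\in B_F^{\bm_k}$ witnessed by $P_k$, the sum $\sum_k b_k$ satisfies the defining relation of $B_F^M$ with the same operators $P_k$ assembled into the right-hand side.

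The second inclusion is then formal: taking zero loci reverses containments of ideals, and the identity $V\bigl(\sum_{k=1}^p I_k\bigr)=\bigcap_{k=1}^p V(I_k)$ in $\bC^r$ applied to $I_k:=B_F^{\bm_k}$ yields
\[
V(B_F^M)\;\subset\;V\Bigl(\sum_{k=1}^p B_F^{\bm_k}\Bigr)\;=\;\bigcap_{k=1}^p V(B_F^{\bm_k}).
\]
I do not anticipate any genuine obstacle here: the only point to verify is that the operators in the defining sum for $B_F^M$ may be chosen to be zero, which is immediate from the definition, and the passage from sums of ideals to intersections of zero loci is standard.
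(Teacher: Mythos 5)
Your proof is correct and matches the paper's intent exactly: the paper states only that the lemma ``follows from the definition,'' and your argument (take $P_i=0$ for $i\neq k$ to see $B_F^{\bm_k}\subset B_F^M$, then sum and pass to zero loci) is precisely the observation being alluded to.
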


All statements in this subsection are true for local Bernstein-Sato ideals as well.

\begin{exa}\label{exaNonP} Let $F=(z,x^4+y^4+2zx^2y^2)$. This appeared in  \cite[Example 3]{BO}. One computes with {\tt dmod.lib} \cite{SingDmod}:
\begin{align*}
B_F&=\langle (s_1+1)(s_2+1)^2(2s_2+1)(4s_2+3)(4s_2+5)(2s_2+3)\rangle,\\
B_{F,0}&=\langle (s_1+1)(s_2+1)^2(2s_2+1)(4s_2+3)(4s_2+5)(s_1+2),\\
 &\quad\quad (s_1+1)(s_2+1)^2(2s_2+1)(4s_2+3)(4s_2+5)(2s_2+3)\rangle ,\\
 B_{F}^{\be_1}&=\id{s_1+1}\cap \id{2s_2+1,s_1+2},\\
B_{F}^{\be_2}&=\id{(s_2+1)\prod_{k=2}^6 (4s_2+k)},\\
B_{F}^{\be_1,\be_2}&=\id{s_1+1,(s_2+1)^2}\cap\id{s_1+2,2s_2+1}\cap\bigcap_{k=2,3,5}\id{s_1+1,4s_2+k}.\\
\end{align*}
Theorem \ref{thmBSDec} and also its strengthening, Proposition \ref{propSTR}, imply that
\begin{align*}
B_F^{\be_1}\cdot (t_1B_F^{\be_2})\subset B_F \subset B_F^{\be_1}\cap (t_1B_F^{\be_2}),\\
(t_2B_F^{\be_1})\cdot B_F^{\be_2}\subset B_F \subset (t_2B_F^{\be_1})\cap B_F^{\be_2},
\end{align*}
which can be checked easily from the above formulas. In this example the three ideals $B_F^{\be_1}\cdot (t_1B_F^{\be_2})$, $(t_2B_F^{\be_1})\cdot B_F^{\be_2}$, and $B_F$ are mutually distinct, and the two inclusions in Proposition \ref{propSTR} are equalities.
\end{exa}

\subsection{Bernstein-Sato ideals and specialization.}

With $F$ as above, let $F^M$ be a specialization of $F$ via a $p\times r$ matrix $M=(m_{kj})$ as in \ref{subSpP}, possibly without the surjectivity assumption of that definition. With the notation as in \ref{subSpP}, the pull-back of local systems and the exponential maps from the tangent spaces at the trivial local systems define a commutative diagram

\begin{equation}\label{eqMaps}
\xymatrix{
\wti{S_M^*}:=T_{\bone}L(S_M^*) \approx\bC^p \ar[d]_{\Exp} \ar[r]^{\wti{\phi}_M} &  \wti{S^*}=T_{\bone}L(S^*) \approx\bC^r \ar[d]_{\Exp}\\
S_M^*=L(S_M^*) \ar[r]^{\phi_M} & S^*=L(S^*)
}
\end{equation}
The top right-hand side space was denoted  $\widetilde{S^*}$ in \ref{subsSSP} and we keep the notation. However, we keep in mind that $T_{\bone}L(S^*)$  is the natural ambient space of the Bernstein-Sato ideal $B_F$. The bottom horizontal map is 
$$\phi_M: (\lam_1,\ldots,\lam_p) \mapsto (\lam_1^{m_{11}}\cdots\lam_p^{m_{p1}},\ldots ,\lam_1^{m_{1r}}\cdots\lam_p^{m_{pr}}).$$
The top map is the linear map given by multiplication  on the left by the matrix $M$:
$$
\wti{\phi}_M: (\al_1,\ldots, \al_p)\mapsto ({m_{11}}\al_1+\cdots+{m_{p1}}\al_p,\ldots ,{m_{1r}}\al_1+\cdots+{m_{pr}}\al_p).
$$
Let $\bC[s_1,\ldots,s_r]$ and $\bC[v_1,\ldots,v_p]$ be the coordinate rings on $\wti{S^*}$ and $\wti{S^*_M}$, respectively. The induced map on coordinate rings is
$$
\wti{\phi}_M^\#: s_j\mapsto \sum_{k=1}^pm_{kj}v_k.
$$
The following is straight-forward from the definition:

\begin{lem}\label{lemwphi} For all non-zero $M\in \bN^{p\times r}$ and all vectors $\bm\in \bN^p$,
$$
\wti{\phi}_M^\#(B_F^{\bm\cdot M})\subset B_{F^M}^\bm.
$$
Hence 
$$
\wti{\phi}_M^{-1} (V(B_F^{\bm\cdot M}))\supset V(B_{F^M}^\bm).$$
\end{lem}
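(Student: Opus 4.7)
The plan is essentially to pull back the defining functional equation of $B_F^{\bm\cdot M}$ along the linear substitution $s_j\mapsto \sum_k m_{kj}v_k$, and check that it produces exactly the functional equation defining $B_{F^M}^{\bm}$. I do not anticipate any real obstacle: the content is that the monomial map $\phi_M$ on tori linearizes to $\wti\phi_M$ on universal covers and is compatible with the multiplicative structure $g_k=\prod_j f_j^{m_{kj}}$ of the specialization.

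First I would take $b\in B_F^{\bm\cdot M}$, so by definition there is $P\in D_X[s_1,\ldots,s_r]$ with
$$
b(s_1,\ldots,s_r)\,\prod_{j=1}^r f_j^{s_j}\;=\;P\,\prod_{j=1}^r f_j^{s_j+(\bm\cdot M)_j},
$$
where $(\bm\cdot M)_j=\sum_{k=1}^p m_k m_{kj}$. I would read this as a formal identity in $D_X[s_1,\ldots,s_r]\cdot\prod_j f_j^{s_j}$, valid after base change along any $\bC$-algebra homomorphism out of $\bC[s_1,\ldots,s_r]$. Specializing along $\wti\phi_M^\#:\bC[s_1,\ldots,s_r]\to\bC[v_1,\ldots,v_p]$, $s_j\mapsto\sum_k m_{kj}v_k$, we obtain an identity in $D_X[v_1,\ldots,v_p]\cdot\prod_j f_j^{\sum_k m_{kj}v_k}$.

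The key computation is that under this substitution
$$
\prod_{j=1}^r f_j^{s_j}\;\longmapsto\;\prod_{j=1}^r f_j^{\sum_k m_{kj}v_k}\;=\;\prod_{k=1}^p\Bigl(\prod_{j=1}^r f_j^{m_{kj}}\Bigr)^{v_k}\;=\;\prod_{k=1}^p g_k^{v_k},
$$
where $g_k$ are the components of $G=F^M$, and similarly, since $(\bm\cdot M)_j+\sum_k m_{kj}v_k=\sum_k m_{kj}(v_k+m_k)$, the shifted product is sent to $\prod_k g_k^{v_k+m_k}$. The pulled-back identity is therefore
$$
\wti\phi_M^\#(b)(v)\,\prod_{k=1}^p g_k^{v_k}\;=\;\wti\phi_M^\#(P)\,\prod_{k=1}^p g_k^{v_k+m_k},
$$
with $\wti\phi_M^\#(P)\in D_X[v_1,\ldots,v_p]$, which is exactly the condition $\wti\phi_M^\#(b)\in B_{F^M}^{\bm}$. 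This proves the first inclusion.

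The second inclusion then follows formally from contravariance of $V$: from $\wti\phi_M^\#(B_F^{\bm\cdot M})\subset B_{F^M}^{\bm}$ we obtain
$$
V(B_{F^M}^{\bm})\;\subset\; V\bigl(\wti\phi_M^\#(B_F^{\bm\cdot M})\bigr)\;=\;\wti\phi_M^{-1}\bigl(V(B_F^{\bm\cdot M})\bigr),
$$
which completes the argument. Note that nowhere in this proof is the surjectivity hypothesis on $\phi_M$ from Definition \ref{dfSpP} used, consistent with the statement being asserted for arbitrary nonzero $M\in\bN^{p\times r}$.
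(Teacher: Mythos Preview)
Your proof is correct and is precisely the expansion of what the paper means by ``straight-forward from the definition'': substitute $s_j\mapsto\sum_k m_{kj}v_k$ in the functional equation and observe that the exponent shifts match. There is nothing to add.
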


\begin{exa}
The converse does not necessarily hold. Let $F=(y^2-x^3,x^5)$, $\bm=\bone$, $M=[1\quad 0]$. Then $B_F^{\bm\cdot M}=B_F^{\be_1}=\id{(s_1+1)\prod_{k=5, 7, 9}(6s_1+10s_2+k)}.$ The map $\wti{\phi}_M^\#$ sends $s_1\mapsto v$ and $s_2\mapsto 0$. Hence $\wti{\phi}_M^\#(B_F^{\bm\cdot M})=\id{(v+1)\prod_{k=5, 7, 9}(6v+k)}$. However, $F^M=(y^2-x^3)$ and $B_{F^M}^\bm=B_{F^M}=\id{(v+1)(6v+5)(6v+7)}$.
\end{exa}

\begin{prop}\label{propBres} For all non-zero $M\in\bN^{p\times r}$ , 
$$
{\phi}^{-1}_M (\Exp (V(B_F)))\supset \Exp (V(B_{F^M})).
$$
\end{prop}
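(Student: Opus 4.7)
The plan is to chase a point of $V(B_{F^M})$ through the commutative diagram \eqref{eqMaps} and bound its image inside $\Exp(V(B_F))$, combining the ideal comparison of Lemma~\ref{lemwphi} with the exponentiated decompositions coming from Lemmas~\ref{lemStr} and \ref{lemm}.

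First I would pick $\al \in V(B_{F^M})$. Writing $B_{F^M} = B_{F^M}^{\bone}$ for $\bone = (1,\dots,1) \in \bN^p$ and invoking Lemma~\ref{lemwphi} with $\bm = \bone$, I obtain $\wti{\phi}_M(\al) \in V(B_F^{\bone \cdot M})$, where $\bone \cdot M \in \bN^r$ is the row vector whose $j$-th entry is $\sum_{k=1}^p m_{kj}$. By the commutativity of \eqref{eqMaps}, $\phi_M(\Exp(\al)) = \Exp(\wti{\phi}_M(\al))$, so it suffices to establish the containment $\Exp(V(B_F^{\bone \cdot M})) \subset \Exp(V(B_F))$.

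Next I would verify this containment as follows. Applying Lemma~\ref{lemStr} to the exponent vector $\bone \cdot M$, and then Lemma~\ref{lemm} on each factor with $(\bone \cdot M)_j \ne 0$, one gets
$$
\Exp(V(B_F^{\bone \cdot M})) \;=\; \bigcup_{j=1}^r \Exp(V(B_F^{(\bone \cdot M)_j \be_j})) \;=\; \bigcup_{j : (\bone \cdot M)_j \ne 0} \Exp(V(B_F^{\be_j})),
$$
since indices $j$ with $(\bone \cdot M)_j = 0$ give $B_F^{\bsy 0} = (1)$ and contribute the empty set to the union. Comparing with the second part of Lemma~\ref{lemStr} applied to $B_F = B_F^{\bone}$, the right-hand side is contained in $\Exp(V(B_F))$, which completes the chain.

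The only subtle point is the possibility of a zero column of $M$: for such an index $j$ the function $f_j$ drops out of the specialization $F^M$, $(\bone \cdot M)_j$ vanishes, and Lemma~\ref{lemExpEq} cannot be used to directly identify $\Exp(V(B_F^{\bone \cdot M}))$ with $\Exp(V(B_F))$. The union decomposition of Lemma~\ref{lemStr} is precisely what sidesteps this issue, producing a containment (which is all the proposition demands) rather than equality.
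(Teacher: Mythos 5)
Your proof is correct and follows essentially the same route as the paper's: apply Lemma~\ref{lemwphi} with $\bm=\bone$, transfer to the exponential side via the commutative square \eqref{eqMaps}, and then compare $\Exp(V(B_F^{\bone\cdot M}))$ with $\Exp(V(B_F))$ through the union decomposition into the $\Exp(V(B_F^{\be_j}))$. The only cosmetic difference is that the paper invokes Lemma~\ref{lemExpEq} for this last step, while you recover the same decomposition directly from Lemmas~\ref{lemStr} and~\ref{lemm}; your explicit handling of indices $j$ with $(\bone\cdot M)_j=0$ (noting $B_F^{\bsy 0}=\bC[s_1,\ldots,s_r]$ so such terms contribute nothing) is a useful clarification of a point the paper's citation of Lemma~\ref{lemExpEq} leaves implicit.
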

\begin{proof}
We apply the last lemma with $\bm$ the unit vector in $\bN^p$ to obtain that $\wti{\phi}^{-1}_M (V(B_F^{\bone\cdot M}))\supset V(B_{F^M})$. Hence ${\phi}^{-1}_M (\Exp(V(B_F^{\bone\cdot M})))\supset \Exp(V(B_{F^M}))$. Now we apply Lemma \ref{lemExpEq}  to show that $${\phi}^{-1}_M (\Exp (V(B_F^{\bone\cdot M})))\subset {\phi}^{-1}_M (\Exp(V(B_F))).$$
\end{proof}

\begin{conj}
For all non-zero $M\in\bN^{p\times r}$  and all vectors $\bm\in \bN^p$,
$$
{\phi}_M^{-1} (\Exp (V(B_F^{\bm\cdot M}))) = \Exp (V(B_{F^M}^\bm)).
$$
In particular, for all $M$ with nonzero columns, 
$$
\phi^{-1}_M (\Exp(V(B_F))) = \Exp (V(B_{F^M})).
$$
\end{conj}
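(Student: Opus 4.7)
The plan is to deduce this conjecture from the central Conjecture \ref{conj2} of the paper, combined with the compatibility of uniform supports of Sabbah specialization complexes under non-degenerate specializations of polynomial maps established in Proposition \ref{propSuppSp}. The inclusion $\supset$ is already obtained from Lemma \ref{lemwphi} by taking zero loci and exponentials, and Proposition \ref{propBres} is the corresponding statement for the \emph{in particular} case; what remains is to prove $\subset$.

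First I would treat the \emph{in particular} statement, where $M$ has non-zero columns (equivalently, $F^M$ is a non-degenerate specialization of $F$ in the sense of Definition \ref{dfSpP}(b)). By the local-to-global decomposition $V(B_F)=\bigcup_{x\in X}V(B_{F,x})$ coming from \cite[Corollary 6]{BO} and the compatibility of $\phi_M^{-1}$ with unions, it suffices to prove
$$\phi_M^{-1}\bigl(\Exp (V(B_{F,x}))\bigr)= \Exp (V(B_{F^M,x}))$$
at each $x\in X$. Assuming Conjecture \ref{conj2}, both sides can be rewritten as unions of uniform supports at nearby points $y\in D$, and the problem reduces to verifying the pointwise equality $\phi_M^{-1}(\Supp^{unif}_y(\psi_F\bC_X))=\Supp^{unif}_y(\psi_{F^M}\bC_X)$, which is exactly Proposition \ref{propSuppSp}.

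For the general pair $(\bm, M)$, I would reduce to $\bm=\bone$ by exploiting the structural results Lemma \ref{lemStr} and Lemma \ref{lemExpEq}: both $\Exp (V(B_F^{\bm\cdot M}))$ and $\Exp (V(B_{F^M}^\bm))$ depend only on the zero patterns of $\bm\cdot M$ and $\bm$, respectively. After replacing $\bm$ by the indicator vector $\bone_I$ of its support $I=\{k:m_k>0\}$, one can attempt to rewrite the problem in terms of the submatrix $M_I$ consisting of the rows of $M$ indexed by $I$, reducing ultimately to the $\bm=\bone$ case for $F^{M_I}$ treated above. The delicate point is that $B_{F^M}^{\bone_I}$ naturally lives in the full polynomial ring $\bC[s_1,\ldots,s_p]$ rather than in the subring indexed by $I$, so this reduction must be performed at the level of exponentials of zero loci rather than at the level of ideals themselves.

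The main obstacle is that Conjecture \ref{conj2} is itself open: only the inclusion $\supset$ is proved (Theorem \ref{thmConjIn}), and that inclusion alone does not suffice here. An unconditional approach would presumably either settle the $\cD$-module criterion of Proposition \ref{PropCondInd}, or else construct directly, from a relation witnessing $b\in B_{F^M}^{\bm}$, a lift to a relation for $F$ whose image under $\wti\phi_M^\#$ recovers $b$ up to the exponential map. This last step appears significantly harder than the reverse inclusion of Lemma \ref{lemwphi}, since specialization of the parameters $\bs\mapsto M\bs$ collapses information about the $V$-filtration on $\cD_X[\bs]\prod_j f_j^{s_j}$ that the Bernstein-Sato ideal $B_F$ is sensitive to.
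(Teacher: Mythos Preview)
The statement you are treating is presented in the paper as an open \emph{conjecture}; the paper offers no proof. What follows the conjecture is only a Remark recording a sufficient condition: it would be enough to show $\phi_M^{-1}(\Exp(V(B_F^{\be_j})))\subset \Exp(V(B_{F^M}^{\be_k}))$ whenever $m_{kj}\ne 0$, via Lemma~\ref{lemExpEq}. So there is no ``paper's own proof'' to compare against, and your proposal should be read as an attempt to either prove or reduce an open problem.

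Your proposal is not a proof but a conditional reduction, and you are explicit about this. For the \emph{in particular} statement (nonzero columns of $M$), your reduction to Conjecture~\ref{conj2} via Proposition~\ref{propSuppSp} is correct: non-degeneracy gives $D_M=D$, so the unions over nearby $y$ match, and $\phi_M^{-1}$ commutes with those unions. This conditional implication is not stated in the paper, though it is in the same spirit as other uses of Conjecture~\ref{conj2} (e.g.\ Proposition~\ref{propBFHA}(b)). It is a different sufficient condition from the one in the paper's Remark: yours goes through the topological side (uniform supports), while the paper's Remark stays on the $\cD$-module side via the unit-vector ideals $B_F^{\be_j}$.

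For the general $(\bm,M)$ case your sketch is incomplete. Replacing $\bm$ by $\bone_I$ via Lemma~\ref{lemExpEq} is fine on both sides, but the passage from $B_{F^M}^{\bone_I}$ (an ideal in $\bC[s_1,\dots,s_p]$) to the $\bm=\bone$ case for $F^{M_I}$ (living over $\bC[s_k:k\in I]$) requires relating $\Exp(V(B_{F^M}^{\bone_I}))$ to the preimage, under the coordinate projection $(\bC^*)^p\to(\bC^*)^{|I|}$, of $\Exp(V(B_{F^{M_I}}))$. You flag this as ``delicate'' but do not carry it out; in particular, when $M$ has zero columns the specialization is degenerate and Proposition~\ref{propSuppSp} does not apply directly, so even granting Conjecture~\ref{conj2} there is a genuine gap here.

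Your closing paragraph accurately identifies the obstruction to an unconditional argument.
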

\begin{rem} For this conjecture to hold it suffices that
$$
\phi_M^{-1}(\Exp (V(B_F^{\be_j})))\subset \Exp (V(B_{F^M}^{\be_k}))
$$
for all $1\le j\le r$, $1\le k\le p$ such that $m_{kj}\ne 0$, according to Lemma \ref{lemExpEq}.
\end{rem}

\begin{exa}\label{exaE} Consider $F=(f,f)$, with $f=x^2+y^3$, and $M=(2\ 2)$, so that $F^M=(f^4)$.
Then,  one computes using \cite{SingDmod} that
$$
B_F=\id{\prod_{k=5,6,7,11,12,13}(6s_1+6s_2+k)}
$$
and 
$$
B_{F^M}=\id{\prod_{k=5,6,7,11,12,13,17,18,19,23,24,25}(24s+k)}.
$$
The last ideal is generated by the classical Bernstein-Sato polynomial of $f^4$. Thus
$$
\Exp (V(B_F))=V(\prod_{k=1,5,6}(t_1t_2-e^{-2\pi i \frac{k}{6}})).
$$
Since the map $\phi_M$ is $\lam\mapsto(\lam^2,\lam^2)$, we see that
$$
\phi^{-1}_M(\Exp (V(B_F)))=\{e^{-2\pi i\frac{k}{24}}\mid k=1,5,6,7,11,12,13,17,18,19,23,24\},
$$
which is the same as $\Exp (V(B_{F^M}))$. Note that $\wti{\phi}_M^{-1}(V(B_F))\ne V(B_{F^M})$ since the inclusion $\supset$ does not hold.

\end{exa}

All statements in this subsection are true for local Bernstein-Sato ideals as well.

\subsection{ Geometric proof of a weaker version of Theorem \ref{thmConjIn}.}\label{subsPT} We shall give now a proof of the statement that one gets by replacing in Theorem \ref{thmConjIn} the set $\Exp(V(B_{F,x}))$ with its analytic Zariski closure, namely that
$$
(\Exp(V(B_{F,x})))^{cl}\supset \bigcup_{y\in D\text{ near }x}\Supp_y^{unif}(\psi_F\bC_X).
$$
 The method   is to use specialization of polynomial maps to reduce the statement to the case $r=1$ for which equality is  known by  Malgrange and Kashiwara. 

We define a subset of $S^*$ by
$$S^o=\bigcup_{\bm\in\bN^r} \Ima (\phi_\bm) ,$$
where $\phi_\bm:S^*_\bm\ra S^*$ is the map $\lam\mapsto (\lam^{m_1},\ldots,\lam^{m_r})$ from (\ref{eqMaps}). Note that a torsion point in $S^*$ must lie in $S^o$. The Zariski closure, analytic or algebraic, of $S^o$ is $S^*$.

By Theorem \ref{ThmGenLib}, the irreducible components of $\Supp^{unif}_y\psi_F\bC_X$ are torsion translated subtori of $S^*$.  By \cite[Proposition 3.3.6]{BG}, for each component the torsion points are Zariski dense. Hence the algebraic and analytic Zariski closure of $S^o\cap \Supp^{unif}_y\psi_F\bC_X$ equal $\Supp^{unif}_y\psi_F\bC_X$.

On the other hand,
\begin{align*}\label{eqSdf}
& S^o\cap \Supp_y^{unif}(\psi_{F}\bC_X) =\bigcup_{\bm\in \bN^r}  \Ima (\phi_\bm)\cap\Supp_y^{unif}(\psi_{F}\bC_X) 
\end{align*}
Consider the specialization $F^\bm$ of $F$ via the vector $\bm$. Then $F^\bm=\prod_{j=1}^rf_j^{m_j}$. By Proposition \ref{propSuppSp},  
$$\Ima (\phi_\bm)\cap\Supp_y^{unif}(\psi_{F}\bC_X)=\phi_\bm(\Supp_y^{unif}(\psi_{F^\bm}\bC_X))$$
if the specialization is non-degenerate, that is, if no coordinate of $\bm$ is zero. Therefore, for such $\bm$, by the classical result of Malgrange and Kashiwara we have
$$
\bigcup_{y\in D\text{ near }x}\phi_\bm(\Supp_y^{unif}(\psi_{F^\bm}\bC_X))= \phi_\bm(\Exp(V(B_{F^\bm ,x}))),
$$
where $B_{F^\bm}$ is generated by the classical Bernstein-Sato polynomial of $F^\bm$. By Proposition \ref{propBres}, this set is included in $\Ima(\phi_\bm)\cap\Exp(V(B_F))$. Hence 
$$
\bigcup_{y\in D\text{ near }x} S^o\cap \Supp_y^{unif}(\psi_{F}\bC_X) \subset 
S^o\cap\Exp(V(B_{F,x}))
$$
away from the zero locus $V(\prod_{j=1}^r(t_j-1))$. Passing to the analytic Zariski closures, we obtain the claim since  $V(\prod_{j=1}^r(t_j-1))$  lies anyway in both $\Exp(V(B_{F,x})$ and $\bigcup_{y\in D\text{ near }x}\Supp_y^{unif}(\psi_F\bC_X)$, being contributed by the smooth points $y$ of the germs $D_j$ near $x$. $\Box$

\subsection{Proof of Theorem \ref{thmR1}.} We follow the same strategy as in \ref{subsPT}. We view the multi-variable topological zeta function $Z^{\, top}_F$ as a rational function on the same ambient space as for the Bernstein-Sato ideal $B_F$, namely on $T_\bone L(S^*)=\wti{S^*}\approx \bC^r$. Similarly for a matrix $M\in\bN^{p\times r}$,  $Z^{\, top}_{F^M}$ is a rational function on $T_\bone L(S_M^*)=\wti{S_M^*}\approx \bC^p$.  The induced map $\wti{\phi}_M^\#$ on coordinate rings  partially extends to one on the function fields, which we also denote $\wti{\phi}_M^\#$, and which is defined for rational functions with polar locus not containing the image vector subspace $\im(\wti{\phi}_M)$.
In particular,
$$
Z^{\, top}_{F^M} = \wti{\phi}_M^\#({Z^{\,top}_F})$$
if $F^M$ is a non-degenerate specialization of $F$, see Definition \ref{dfSpP}. In this case, for polar loci we have
$$
PL(Z^{\, top}_{F^M})\subset \wti{\phi}_M^{-1}(PL(Z^{\,top}_F))\subset PL(Z^{\, top}_{F^M})\cup W_M,
$$
where $W_M$ is the union of the linear codimension-one subvarieties of $\wti{S}^*_M$ which lie in both $\wti{\phi}_M^{-1}(PL(Z^{\,top}_F))$ and $\wti{\phi}_M^{-1}(ZL(Z^{\,top}_F))$, where $ZL$ stands for the zero locus of a rational function. Hence
\begin{equation}\label{eqBzz}
\Exp(PL(Z^{\, top}_{F^M})) \subset \Exp(\wti{\phi}_M^{-1}(PL(Z^{\,top}_F)))\subset {\phi}_M^{-1}(\Exp(PL(Z^{\,top}_F))),
\end{equation}
with the last inclusion an equality if the map $\phi_M$ is injective. Also in this case, 
$$
\phi_M^{-1}(\Exp (PL(Z^{\, top}_F)))\subset \Exp (PL(Z^{\, top}_{F^M})) \cup\Exp (W_M).
$$

The map $\phi_M$ is injective if for example if $M$ consists of only one row $\bm$ with the greatest common divisor of the entries equal to 1. Let us cover $S^*$ discretely by such images. More precisely, let $S^o$ be as in \ref{subsPT}. Note  it is enough to consider the union of $\im (\phi_\bm)$ over  $\bm\in\bN^r$ with  the greatest common divisor of the entries equal to 1. Since we want to consider only non-degenerate specializations via $\bm$, we shall furthermore restrict in the definition of $S^o$ to vectors $\bm$ with non-zero entries. Note that the Zariski algebraic closure of $S^o$ is still $S^*$ after all these restrictions. 

Since $\Exp (PL(Z^{\, top}_F))$ is algebraically Zariski closed, it is the algebraic Zariski closure of $S^o\cap \Exp (PL(Z^{\, top}_F))$. Also,
\begin{align*}
S^o\cap \Exp (PL(Z^{\, top}_F)) &\subset \bigcup_{\bm}  \phi_\bm (\Exp( (PL(Z^{\, top}_{F^\bm})))) \cup \bigcup_{\bm}\phi_\bm(\Exp (W_\bm)).
\end{align*}
By assumption, the Monodromy Conjecture holds for $F^\bm$. Thus there is an inclusion 
$$
\bigcup_{\bm}  \phi_\bm (\Exp( (PL(Z^{\, top}_{F^\bm}))))\subset \bigcup_{\bm} \bigcup_x \phi_\bm(\Supp_x^{unif}(\psi_{F^\bm}\bC_X)),
$$
which is in turn included in
$$
\bigcup_x S^o\cap \Supp_x^{unif}(\psi_{F}\bC_X)
$$
by specialization of supports. Taking closure, we obtain that $\Exp (PL(Z^{\, top}_F))$ is included in the union of $
\bigcup_x  \Supp_x^{unif}(\psi_{F}\bC_X)$ with the algebraic Zariski closure of $\bigcup_{\bm}\phi_\bm(\Exp (W_\bm))$. It remains to show that we can ignore this last term $\bigcup_{\bm}\phi_\bm(\Exp (W_\bm))$.

Note that $W_\bm$ has dimension zero and that $\bigcup_{\bm}\phi_\bm(\Exp (W_\bm))$ equals $S^o\cap \Exp (PL(Z_F^{\, top})\cap ZL(Z_F^{\, top}))$. Let $T$ be an irreducible component of $PL(Z_F^{\, top})$. Write $T\cap ZL(Z_F^{\, top})=Z_{lin}\cup Z_{nonlin}$, where $Z_{lin}$ is the union of linear irreducible components and $Z_{nonlin}$ is the union of all the other components. Since $Z_{lin}\subsetneq T$ and $\Exp(Z_{lin})$ is algebraically Zariski closed, running the previous argument with $S^o$ replaced by $S^o=\cup_\bm \im(\phi_\bm)$ with $\im(\phi_\bm)\cap \Exp (Z_{lin})=\emptyset$, we obtain that $\Exp (T)$ is included in the union of $
\bigcup_x  \Supp_x^{unif}(\psi_{F}\bC_X)$ with the algebraic Zariski closure of $\bigcup_{\bm}\phi_\bm(\Exp (W_\bm)$ where the union is over those $\bm$ such that $\phi_\bm(\Exp (W_\bm))\in \Exp(Z_{nonlin})$. So we can assume that $Z_{lin}$ is empty and it remains to deal with the non-linear term $Z_{nonlin}$.

The problem with $Z_{nonlin}$ is that $\Exp (Z_{nonlin})$ might have the same algebraic Zariski closure as $\Exp (T)$. However, if $Z_{nonlin}\ne \emptyset$, we change slightly the argument: instead of filling the component $\Exp(T)$ discretely with points of type $\im(\phi_\bm)$, we fill it discretely with restrictions of higher dimensional subtori $\Exp(T)\cap \im(\phi_M)$. More precisely, we let now $S^o$ be the union of $\im (\phi_M)$ over matrices of natural numbers $M$ of size $p\times r$ with $1<p<r$ of rank $p$ and such that $\phi_M$ is injective and such that $M$ has no non-zero columns. This ensures in particular that $M$ gives a non-degenerate specialization. Note that the non-emptiness of $Z_{nonlin}$ implies that $r>2$ and $T\cap W_M=\emptyset$. Running the previous argument with this $S^o$, namely using the inductive assumption that the Monodromy Conjecture holds for $F^M$, using the  specialization of supports, and taking algebraic Zariski closure, we obtain that $\Exp (T)$ is included in $
\bigcup_x  \Supp_x^{unif}(\psi_{F}\bC_X). \quad\Box
$

\subsection{ Proof of Theorem \ref{thmSp}.}  Let $F$ specialize to $G$ via the matrix $M$, that is $G=F^M$. The assumption the Topological Monodromy Conjecture holds for $F$ implies that
$$
\wti{\phi}_M^{-1}(\Exp(PL(Z^{\,top}_F)))\subset \bigcup_{x}\phi_M^{-1}(\Supp_x^{unif}(\psi_F\bC_X)).
$$
By Proposition \ref{propSuppSp}, the right-hand side equals
$$
\bigcup_{x}\Supp_x^{unif}(\psi_G\bC_X).
$$
Hence, by (\ref{eqBzz}), the Topological Monodromy Conjecture  holds for $G$ as well. Note that for (\ref{eqBzz}) we need to assume that $M$ gives a non-degenerate specialization. $\Box$

\begin{rem}\label{remSpSMC} 
The analogs of Theorems \ref{thmR1} and \ref{thmSp} for the Strong Monodromy Conjecture, while expected to hold, do not follow with a similar proof. The reason is that the inclusion $V(B_{F^M})\subset\wti{\phi}_M^{-1}(V(B_F))$ can fail, see Example \ref{exaE}.  Note that compatibility with specializations would  place bounds on $PL(Z^{\,top}_F)$, namely it would have to be included in 
$$
\bigcap_{M}\wti{\phi}_M(V(B_{F^M})) \cap V(B_F). 
$$
When $r=1$, this means that $PL(Z^{\,top}_f)$ should be a subset of $$\bigcap_{m>0} (m V(b_{f^m}))\cap V(b_f).$$

\end{rem}

\section{$\cD$-modules}

In this section we prove Theorem \ref{thmConjIn} and Propositions  \ref{PropImp} and \ref{PropCondInd}.

\subsection{Explicit Riemann-Hilbert correspondence.}\label{subsERH}
Let $\cD_X$ be the sheaf of holomorphic differential operators on the complex manifold $X=\bC^n$. Let $F=(f_1,\ldots ,f_r)$ with $0\ne f_j\in\bC[x_1,\ldots ,x_r]$, let 
$$
f=\prod_{j=1}^rf_j,
$$
$D=f^{-1}(0)$, and let $j:U=X\smallsetminus D\ra X$ be the natural inclusion.  For  $\al$ in $\bC^r$, consider the left  $\cD_X$-modules
\begin{align*}
\cM_\al &= \cO_X[f^{-1}]\prod_{j=1}^rf_j^{\al_j},\\
\cP_\al &= \cD_X\prod_{j=1}^rf_j^{\al_j}.
\end{align*}
On $U$, $\cM_\al$ and $\cP_\al$ define the same rank one locally free $\cO_U$-module, hence an integrable connection. Let  $\cL_{\al}$ be the associated rank one local system on $U$. Let  $IC(\cL_{\al}[n])$ be the intersection complex on $X$ of the perverse sheaf $\cL_{\al}[n]$, and let $DR$ denote the De Rham  functor. The following is known to experts:

\begin{thm}\label{thmDR} (a) $\cM_\al$ is a regular holonomic $\cD_X$-module, and
$$DR(\cM_\al)=DR(\cP_{\al-m\cdot\bone})=Rj_*\cL_{\al}[n]$$
for integers $m\gg 0$.

(b) For integers $m\gg 0$, $\cP_{\al+m\cdot\bone}$ is a regular holonomic simple $\cD_X$-module, and
$$DR(\cP_{\al+m\cdot\bone})=IC(\cL_{\al}[n]).$$
\end{thm}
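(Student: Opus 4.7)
The approach identifies $\cM_\al$ with the algebraic direct image $j_+\cL_\al$ of the rank-one regular integrable connection $\cL_\al=(\cO_U,d+\sum_j\al_j\,df_j/f_j)$ on $U$. Since $j:U\hookrightarrow X$ is an affine open immersion, Kashiwara's direct image theorem yields that $\cM_\al=j_+\cL_\al$ is regular holonomic on $X$ (first claim of (a)), and the Riemann--Hilbert correspondence gives $DR(\cM_\al)\cong Rj_*DR(\cL_\al)\cong Rj_*\cL_\al[n]$. The remaining identifications in (a) and (b) rest on a uniform Bernstein--Sato stabilization argument: by the Sabbah--Gyoja theorem recalled in the Introduction, $B_F$ contains a nonzero element of the form $b(\bs)=\prod_i L_i(\bs)$ with linear $L_i(\bs)=\sum_j c_{i,j}s_j+c_i$, $c_{i,j}\in\bQ_{\geq 0}$, $c_i\in\bQ_{>0}$, together with $P\in\cD_X[\bs]$ satisfying $b(\bs)\prod_j f_j^{s_j}=P\prod_j f_j^{s_j+1}$.

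\textbf{Part (a).} The submodules $\cP_{\al-k\bone}\subseteq\cM_\al$ for $k\geq 0$ form an increasing chain (from $\prod_j f_j^{\al_j-k}=f\cdot\prod_j f_j^{\al_j-k-1}$, with $f\in\cD_X$) whose union exhausts $\cM_\al$, since every element can be written $g\prod_j f_j^{\al_j-N}$ with $g\in\cO_X$. Specializing the functional equation at $\bs=\al-(k+1)\bone$ forces $\cP_{\al-(k+1)\bone}=\cP_{\al-k\bone}$ whenever $b(\al-(k+1)\bone)\neq 0$. Viewed as a polynomial in the scalar $k$, each linear factor $L_i(\al-(k+1)\bone)=[c_i+\sum_j c_{i,j}\al_j]-(k+1)\sum_j c_{i,j}$ is not identically zero (if $\sum_j c_{i,j}=0$ then all $c_{i,j}=0$ and $L_i=c_i\neq 0$), so $b(\al-(k+1)\bone)$ vanishes for only finitely many $k$. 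Hence the chain stabilizes, giving $\cP_{\al-m\bone}=\cM_\al$ for $m\gg 0$ and therefore $DR(\cP_{\al-m\bone})=DR(\cM_\al)=Rj_*\cL_\al[n]$.

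\textbf{Part (b).} The symmetric argument applied at $\bs=\al+k\bone$ stabilizes the decreasing chain $\cP_{\al+k\bone}\subseteq\cM_\al$ for $k\geq m\gg 0$; let $\cN$ denote the stable value. The identification $\cN\cong j_{!*}\cL_\al$, which yields simplicity and $DR(\cN)=IC(\cL_\al[n])$ via Riemann--Hilbert, reduces to verifying: (i) $\cN$ has no nonzero $\cD_X$-submodule supported on $D$, and (ii) $\cN$ has no nonzero quotient supported on $D$. Claim (i) is immediate from $\cM_\al=j_*(\cM_\al|_U)$, which has no subsheaves supported on $D$. For (ii), any such quotient $\mathcal{Q}$ is coherent over $\cD_X$ and therefore locally annihilated by some power $f^N$; the image of $\prod_j f_j^{\al_j+m}$ in $\mathcal{Q}$ is killed by $f^N$, so $\prod_j f_j^{\al_j+m+N}=f^N\prod_j f_j^{\al_j+m}$ maps to $0$ in $\mathcal{Q}$. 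But this element generates $\cP_{\al+(m+N)\bone}=\cN$ by the stabilization (since $m+N\geq m$), hence $\mathcal{Q}=0$. The delicate point is step (ii): while (i) is built into the ambient module $\cM_\al$, step (ii) is the multi-variable substitute for the classical one-variable fact that $\cD_X\cdot f^{\al+m}$ realizes the $V$-filtration-minimal submodule of $\cM_\al$ for $m$ past the roots of $b_f(s+\al)$, and it is precisely here that the stabilization of the decreasing chain $\cP_{\al+k\bone}$, and hence Sabbah's existence theorem for $B_F$, is used in an essential way.
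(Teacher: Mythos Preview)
Your argument is essentially correct and takes a more self-contained route than the paper. The paper treats the theorem as a compilation of known facts: it cites \cite{OTa} for the equality $\cM_\al=\cP_{\al-m\cdot\bone}$, \cite{Bj} for the identification of $\cM_\al$ with the Deligne module of moderate-growth sections and for its regularity (the $DR$ computation in (a) passes through a log resolution and Bj\"ork's module $\cB_+(U,\cL_\al)$), and \cite{Gi} for the identification of the minimal Deligne extension $\cM_{\al,\otimes}$ with $\cP_{\al+m\cdot\bone}$ and for the IC property. By contrast, you prove the two stabilizations directly from the Sabbah--Gyoja functional equation and then characterize the minimal extension intrinsically as the unique holonomic extension with no sub- or quotient-module supported on $D$. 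Your approach makes the mechanism visible and avoids the literature chase; the paper's approach has the advantage of locating the result precisely within the existing theory of Deligne modules.

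Two small imprecisions are worth flagging, though neither is a genuine gap. First, in the analytic setting $\cM_\al$ is not literally the sheaf-theoretic $j_*(\cM_\al|_U)$ (the latter contains sections with essential singularities along $D$); the correct justification of your claim (i) is simply that $\cM_\al$ is $f$-torsion-free, which forces any holonomic submodule supported on $D$ to vanish. Second, in (ii) the step ``$\cD_X$-coherent, therefore locally annihilated by some $f^N$'' is not right as stated (a holonomic module supported on $D$ need not be killed by a uniform power of $f$); but all you actually use is that the image of the single generator $\prod_j f_j^{\al_j+m}$ is $f$-torsion, and this follows immediately from $\mathcal{Q}[f^{-1}]=0$, which is exactly what ``supported on $D$'' gives for a holonomic quotient. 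With these cosmetic fixes your proof goes through.
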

\begin{proof}
That
$
\cM_\al = \cD_xf^{-m}\prod_{j=1}^rf_j^{\al_j}=\cP_{\al-m\cdot\bone},
$
for integers $m\gg 0$, it is proved in \cite[Proposition 3.5]{OTa}. In {\it loc. cit.}, $\cM_\al$ is called a Deligne module and it is identified with the sheaf of sections of moderate growth of $j_*(\cO_U\otimes_{\bC_U}\cL_{\al})$, For this identification, see in \cite{Bj}: 6.1.10 and remark after 6.3.13. For the fact that the Deligne module $\cM_\al$ is a regular holonomic  $\cD_X$-module, see \cite[5.3.8]{Bj}. The unique object of $D^b_{r.h.}(\cD_X)$ such that its image under  DR  is $Rj_*\cL_{\al}[n]$ is called $\cB_+(U,\cL_{\al})$ in \cite[5.5.5]{Bj}. Note that in {\it loc. cit.} the definition of perverse sheaves is shifted by $[-n]$ compared with the commonly accepted definition of today. The identification of $\cB_+(U,\cL_{\al})$ with $\cM_\al$ is in two steps. Firstly, $\cB_+(U,\cL_{\al})$ is the $\cD$-module direct image, under a log resolution $Y$ of $(X,D)$ keeping $U$ intact, of the Deligne module attached to $U$ and $\cL_{\al}$ on $Y$, see \cite[5.5.28]{Bj}. Secondly, this direct image is the Deligne module attached to $U$ and $\cL_{\al}$ on $X$, see the proof of \cite[4.5.2]{Bj}. This shows (a).

The minimal Deligne extension $\cM_{\al,\otimes}$ of the Deligne module $\cM_\al$ is defined in \cite[4.4.8]{Bj} and also in \cite[4.4.3]{Gi}. The fact that it is regular holonomic is proved in \cite[Sublemma 2, p. 211]{Bj} and also in \cite[4.4.5.6]{Gi}. The fact that $\cM_{\al,\otimes}=\cP_{\al+m\cdot\bone}$ for integers $m\gg 0$ is proved in \cite[4.4.7]{Gi}. The fact that $DR(\cM_{\al,\otimes})=IC(\cL_{\al}[n])$ follows from the functoriality in the Riemann-Hilbert correspondence, see \cite[5.5.11]{Bj}, or see the proof of the main properties characterizing the intersection complex in \cite[4.4.4]{Gi}. This shows (b).
\end{proof}

For a point $x$ in $X$, let $U_x$ denote a small ball around $x$ in $X$, and $U_{F,x}$ denote, as before, the complement  of $D$ in $U_x$.

\subsection{Proof of Theorem \ref{thmConjIn}.}\label{subsPTM} By Proposition \ref{propSuppSp}, Lemma \ref{lemRedSp}, and Proposition \ref{propBres}, it is enough to restrict to the case when the $f_j$ with $f_j(x)=0$ define mutually distinct reduced and irreducible hypersurface germs at $x$. By Theorem \ref{thm2}, it is then enough to prove that 
$$
\Exp(V(B_{F,x}))\supset \bigcup_{y\in D\text{ near }x}\cV^{unif}(U_{F,y}).
$$
Since 
$$V(B_{F,x})=\bigcup_{y\in D\text{ near }x}V(B_{F,y}),$$
It is enough to show that 
$$
\Exp(V(B_{F,x}))\supset \cV^{unif}(U_{F,x}).
$$
When the $f_j$ vanishing at $x$ are mutually distinct reduced and irreducible hypersurface germs at $x$, the restriction of the local system defined above, $\cL_{\al}$,  to $U_{F,x}$  is the local system with monodromy $\Exp(\al_j)$ around $f_j$.

Let $\al\in\bC^r$ such that $\Exp(\al)$ is not in $\Exp(V(B_{F,x}))$. It is then enough to show that $\cL_{\al}$ is not in $\cV(U_{F,x})$. Since $H^k(U_{F,x},\cL_{\al})^\vee=H^{n-k}(U_{F,x},\cL_{\al})$, it is enough to show $\cL_{\al}$ has trivial cohomology on $U_{F,x}$. Since $\al+m\cdot\bone$ is not in $V(B_{F,x})$ for any integer $m$, $$\cP_{\al}=\cP_{\al+m\cdot\bone}$$ on $U_x$ for all integers $m$, see \cite[Proposition 3.3]{OTa}. Thus, by Theorem \ref{thmDR}, on $U_x$ we have  $Rj_*\cL_{\al}=IC(\cL_{\al})$. In particular $Rj_*\cL_{\al}=j_*\cL_{\al}$, and so $R^kj_*\cL_{\al}=0$ for $k>0$. Since $U_x$ is very small, this means that $$
 H^k(U_{F,x},\cL_\al)=0
$$
for $k>0$. When $k=0$, 
$H^0(U_{F,x},\cL_\al)\ne 0$ if and only if $\cL_\al$ is the trivial local system on $U_{F,x}$. However, we have excluded this case since $V(B_{F,x})$ contains the hyperplanes  $V(s_j+1)$ for all $1\le j\le r$ with $f_j(x)=0$. Indeed, $\id{s_j+1}=B_{F,y}$ for $y$ near $x$ such that $y$ is a nonsingular point of $D$ with $f_j(y)=0$. $\Box$

\subsection{Proof of Proposition \ref{PropImp}.} If Conjecture \ref{conj2} is true, then, by Theorem \ref{ThmGenLib}, $\Exp(V(B_{F,x}))$ is a finite union of torsion translated subtori of $S^*$. Since a subtorus of $S^*$ must be the $\Exp$ image of a  linear subvariety defined over $\bQ$ of $S$, it follows that $V(B_{F,x})$ is a union of linear subvarieties defined over $\bQ$. Suppose 
$$V(B_{F,x})\supset V(\al_1s_1+\ldots+\al_rs_r+\al)$$ 
for some $\al_j ,\al$ in $\bQ$. By \cite{G}, we know that $V(B_{F,x})$ is included in a finite union
$$
\bigcup_i V(\beta_{i1}s_1+\ldots \beta_{ir}s_r+\beta_i)
$$
with $\beta_{ij}\in \bQ_{\ge 0}$ and $\beta_i\in\bQ_{>0}$. If $\al_j\ne 0$, by restricting to $V(\id{s_k\mid k\ne j})$, we obtain that
$$
\bigcup_i\{-\beta_i/\beta_{ij}\mid \beta_{ij}\ne 0\} \supset \{-\al/\al_j\}.
$$
In particular, $\al\ne 0$ and has the same sign as $\al_j$. Thus we can assume $\al_j\in\bQ_{\ge 0}$ for all $j$ and $\al\in\bQ_{>0}$. Indeed, the only other case that can occur is when all nonzero $\al_j$ and $\al$ are negative, in which case we can replace them with their absolute values. This proves the claim. $\Box$

\subsection{The converse of Theorem \ref{thmConjIn}.}

While the converse of Theorem \ref{thmConjIn} can be phrased in terms of $\cD$-modules using Sabbah specialization for $\cD$-modules, see  \cite[\S 4 and \S 5]{Sab}, we opt for a different strategy which has the advantage of being easier to state. 

Let 
\begin{align*}
\cN & := \cD_X[s_1,\ldots,s_r]f_1^{s_1}\ldots f_r^{s_r}
\end{align*}
as a $\cD_X$-submodule of $\cO_X[f^{-1},s_1,\ldots,s_r]f_1^{s_1}\ldots f_r^{s_r}.$
There is an injective map
\begin{align*}
\nabla &: \cN \lra \cN 
\end{align*}
which sends $(s_1,\ldots,s_r)$ to $(s_1+1,\ldots, s_r+1)$. Restating Proposition \ref{corDI} locally, for a point $x\in D$, one has the following $\cD$-module description on Bernstein-Sato ideals:

\begin{lem}\label{lemBRef} 
$$B_{F,x}=Ann_{\bC[s_1,\ldots,s_r]} (\cN_x/ \nabla\cN_x).$$ \end{lem}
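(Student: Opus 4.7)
The proof is essentially an unpacking of definitions, with one small observation making it work. Let me outline the plan.

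First I would identify the image of $\nabla$ concretely: by construction, $\nabla$ sends $P(x,\partial,s_1,\ldots,s_r) f_1^{s_1}\cdots f_r^{s_r}$ to $P(x,\partial,s_1+1,\ldots,s_r+1) f_1^{s_1+1}\cdots f_r^{s_r+1}$. Since $\cD_X[s_1,\ldots,s_r]$ is closed under the shift $s_j\mapsto s_j+1$, the image is exactly
\[
\nabla\cN \;=\; \cD_X[s_1,\ldots,s_r]\,f_1^{s_1+1}\cdots f_r^{s_r+1}.
\]
Thus by the local form of Proposition \ref{corDI} (equivalently, the local version of the very definition of the Bernstein--Sato ideal in the Introduction), a polynomial $b(s_1,\ldots,s_r)\in\bC[s_1,\ldots,s_r]$ lies in $B_{F,x}$ if and only if $b(s)\,f_1^{s_1}\cdots f_r^{s_r}\in \nabla\cN_x$, i.e.\ $b(s)$ annihilates the class of the generator $f_1^{s_1}\cdots f_r^{s_r}$ in $\cN_x/\nabla\cN_x$.

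Next I would upgrade this from ``annihilates the generator'' to ``annihilates the whole module.'' This uses the fact that $\cN_x$ is cyclic as a $\cD_{X,x}[s_1,\ldots,s_r]$-module, generated by $f_1^{s_1}\cdots f_r^{s_r}$, together with the elementary observation that each $b(s)\in\bC[s_1,\ldots,s_r]$ is central in $\cD_{X,x}[s_1,\ldots,s_r]$. Concretely, for any $Q\in\cD_{X,x}[s_1,\ldots,s_r]$,
\[
b(s)\cdot\bigl(Q\,f_1^{s_1}\cdots f_r^{s_r}\bigr)\;=\;Q\cdot\bigl(b(s)\,f_1^{s_1}\cdots f_r^{s_r}\bigr),
\]
so if $b(s)\,f_1^{s_1}\cdots f_r^{s_r}\in\nabla\cN_x$ then $b(s)$ multiplies every element of $\cN_x$ into $\nabla\cN_x$ (using that $\nabla\cN_x$ is a left $\cD_{X,x}[s_1,\ldots,s_r]$-submodule). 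The converse inclusion---that anything annihilating the quotient annihilates the class of the generator---is tautological.

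Combining the two paragraphs gives the two-sided inclusion
\[
B_{F,x} \;=\; \mathrm{Ann}_{\bC[s_1,\ldots,s_r]}(\cN_x/\nabla\cN_x).
\]
There is no real obstacle here; the only thing one has to be careful about is keeping track that $\nabla$ is the shift-plus-multiplication map and that its image is the left $\cD_{X,x}[s]$-module generated by $f_1^{s_1+1}\cdots f_r^{s_r+1}$, which matches the ideal appearing on the right-hand side of the defining relation of $B_{F,x}$.
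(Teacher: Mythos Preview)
Your proof is correct and matches the paper's approach: the paper gives no explicit proof, presenting the lemma as a direct local restatement of Proposition~\ref{corDI}, and you have correctly unpacked what that restatement amounts to---identifying $\nabla\cN_x$ with $\cD_{X,x}[s_1,\ldots,s_r]f_1^{s_1+1}\cdots f_r^{s_r+1}$ and using centrality of $\bC[s_1,\ldots,s_r]$ in $\cD_{X,x}[s_1,\ldots,s_r]$ to pass from annihilation of the generator to annihilation of the full quotient.
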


\begin{lem}\label{lemAssu} The following are equivalent:
\begin{enumerate}
\item For all $\al\in V(B_{F,x})$, 
$$
\sum_{j=1}^r(s_j-\al_j) \cN_x/\nabla\cN_x \subsetneq \cN_x/\nabla\cN_x;
$$
\item For all maximal ideals $I$ of $\bC[s_1,\ldots,s_r]$ containing $Ann_{\bC[s_1,\ldots,s_r]} (\cN_x/ \nabla\cN_x)$, 
$$
I(\cN_x/ \nabla\cN_x)\ne \cN_x/ \nabla\cN_x.
$$
\end{enumerate}
\end{lem}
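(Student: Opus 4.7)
\emph{Plan.} Set $R=\bC[s_1,\ldots,s_r]$ and $M=\cN_x/\nabla\cN_x$, which is an $R$-module whose annihilator is $B_{F,x}$ by Lemma \ref{lemBRef}. The plan is to show that the two conditions are literal translations of each other via Hilbert's Nullstellensatz, which applies because $R$ is a polynomial ring over the algebraically closed field $\bC$.

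First I would invoke the Nullstellensatz to identify the maximal ideals of $R$ with the points of $\bC^r$: every maximal ideal of $R$ has the form
$$I_\al := \id{s_1-\al_1,\ldots,s_r-\al_r} = \sum_{j=1}^r R(s_j-\al_j)$$
for a unique $\al=(\al_1,\ldots,\al_r)\in\bC^r$, and the correspondence $\al\leftrightarrow I_\al$ is a bijection. Under this bijection, $I_\al$ contains the ideal $\mathrm{Ann}_R(M)=B_{F,x}$ if and only if $\al\in V(B_{F,x})$. Thus the quantifier ``for all maximal ideals $I$ containing $\mathrm{Ann}_R(M)$'' in (2) translates exactly to ``for all $\al\in V(B_{F,x})$'' in (1).

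Next I would compare the submodules $I_\al\cdot M$ and $\sum_{j=1}^r(s_j-\al_j)\cdot M$ of $M$. Because the generators $s_j-\al_j$ of $I_\al$ already span $I_\al$ as an $R$-module, we have the equality of submodules
$$I_\al\cdot M \;=\; \sum_{j=1}^r(s_j-\al_j)\cdot M.$$
Finally, since $I_\al M\subseteq M$ always holds, the strict inclusion in (1) is equivalent to the inequality in (2). Assembling these three observations gives the equivalence (1)$\Leftrightarrow$(2).

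There is no real obstacle here: the statement is a straightforward reformulation once one recognizes the Nullstellensatz correspondence and notes that $I_\al M=\sum_j(s_j-\al_j)M$. The only point deserving a brief mention is the identification $B_{F,x}=\mathrm{Ann}_R(M)$ from Lemma \ref{lemBRef}, which ensures that the set of maximal ideals appearing in (2) is precisely $\{I_\al:\al\in V(B_{F,x})\}$.
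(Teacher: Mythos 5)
Your proof is correct, and it is exactly the straightforward translation that the paper leaves implicit (the paper states Lemma \ref{lemAssu} without proof, treating it as an immediate consequence of Lemma \ref{lemBRef} and the Nullstellensatz). Your three observations — the Nullstellensatz bijection $\al \leftrightarrow I_\al$, the equivalence $I_\al\supseteq\mathrm{Ann}_R(M)\Leftrightarrow\al\in V(B_{F,x})$, and the submodule identity $I_\al M=\sum_j(s_j-\al_j)M$ — are all the content there is.
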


\begin{rem} When $r=1$, the equivalent statements in the previous lemma hold. In this case, $\cN/\nabla\cN$ is a holonomic $\cD_X$-module, hence artinian. Thus the map $s-\al$ on $\cN/\nabla\cN$ is surjective if and only if it is an isomorphism, or, in other words, if and only if $\al$ is not a root of the classical one-variable Bernstein-Sato polynomial of $f$. But here is another proof of this case, without using any $\cD$-module theory. Consider the following statement:

\medskip

{\it (*) Let $M$ be a module over the localization $R$ of $\bC[s_1,\ldots,s_r]$ at the origin. Let $I$ be the maximal ideal of $R$. Assume that  $0\ne Ann_RM\ne R$. Then $I M\ne M$.}

\medskip
\noindent When $M$ is finitely generated, (*) is true due to Nakayama's Lemma. When $r=1$, (*) is again true. Indeed, in this case $Ann_RM=I^a$, for some $a>0$. If $I M=M$, then $M=I M=I^2M=\ldots = I^a M=0$, contradicting that $Ann_RM\ne R$. Now, after a linear change of the coordinates $s_j$ and after localization, (*) is equivalent  with the statement (2) of the above lemma. Hence, we have given another, simpler, proof that the statements in the previous lemma hold in the case $r=1$.

\end{rem}

\begin{rem} One way to see where lies the difficulty with proving the equivalent statements in the previous lemma for the case $r>1$ is to see why the statement (*) fails in general for non-finitely generated modules $M$. With the notation as in (*), let $R$ be the localization at the origin of the affine coordinate ring of a generic line through the origin in $\bA^r$. One can reduce to the case $r=1$ if the assumptions of (*) imply
$$Ann_{R'}(M\otimes_RR')= Ann_R(M)\otimes_RR'.$$
The left-hand side always includes the right-hand side, but the other inclusion is not always true.
\end{rem}

\subsection{Proof of Proposition \ref{PropCondInd}.} By Proposition \ref{propSuppSp}, Lemma \ref{lemRedSp}, and Proposition \ref{propBres}, it is enough to restrict to prove the converse of Theorem \ref{thmConjIn} for the case when the $f_j$ with $f_j(x)=0$ define mutually distinct reduced and irreducible hypersurface germs at $x$. By Theorem \ref{thm2}, it is then enough to prove that 
$$
\Exp(V(B_{F,x}))\subset \bigcup_{y\in D\text{ near }x}\cV^{unif}(U_{F,y}).
$$

 We follow the strategy as in the case $r=1$ from \cite[6.3.5]{Bj}. Let $\al\in V(B_{F,x})$.  Suppose that $\Exp(\al)\not\in\cV^{unif}(U_{F,y})$ for all $y\in D$ near $x$. Then $H^k(U_{F,y},\cL_{\al})=0$ for all $k$ and all $y\in D$ near $x$, where $\cL_\al$ is as in \ref{subsERH}. In particular, $R^kj_*\cL_\al$ has no sections over a small ball around such $y$, $U_y$, included in a small ball around $x$, $U_x$. Hence $$(Rj_*\cL_\al)_{| U_x\cap D}=0,$$
and so $Rj_*\cL_\al= IC(\cL_\al)$ on $U_x$. Hence $\cM_{\al}=\cP_{\al}$. In particular, $\cP_\al=\cP_{\al+m\cdot\bone}$ for any integer $m$. We will show that this contradicts the assumption.

Let 
$$
\cN_\al = \cN / \sum_{j=1}^r(s_j-\al_j)\cN.
$$
Then $\nabla$ induces a map 
$$
\rho_\al:\cN_{\al+\bone}\lra \cN_\al.
$$
The assumption is equivalent to the statements of Lemma \ref{lemAssu}. Hence  locally at $x$
$$
\sum_{j=1}^r(s_j-\al_j) \cN/\nabla\cN \subsetneq \cN/\nabla\cN,
$$
and thus the map $\rho_{\al}$ is not surjective on the stalks at $x$.  There is a natural  commutative diagram of $\cD_X$-modules
$$
\xymatrix{
\cN_{\al+\bone}   \ar[r]^{\rho_\al}\ar@{>>}[d]  &  \cN_\al \ar@{>>}[d]\\
\cP_{\al+\bone} \ar@{=}[r] &  \cP_\al}
$$
where the vertical maps are surjective. Replace, if necessary, $\al$ by $\al-m\cdot \bone$ for some positive integer $m$ to obtain that $\al\in V(B_{F,x})$, but
$$
\al-m\cdot \bone\not\in V(B_{F,x}),\quad\text{ for all }m\in\bZ_{>0}.
$$
It is possible to do so by \cite[Proposition 3.2]{OTa}. Note that $\cM_\al$ is unchanged. Then, by the local version of \cite[Proposition 3.6]{OTa}, the right-most map gives an isomorphism locally at $x$
$$
\cN_\al\cong \cD_{X}f_1^{\al_1}\ldots f_r^{\al_r}.
$$
Since $\rho_{\al}$ is not surjective locally at $x$, it follows that 
$$
\cD_{X}f_1^{\al_1+1}\ldots f_r^{\al_r+1}\subsetneq \cD_{X}f_1^{\al_1}\ldots f_r^{\al_r},
$$ which is what was claimed.
$\Box$

\section{Hyperplane arrangements}\label{subsHA}

In this section we give a combinatorial formula for the support of the Sabbah specialization complex of a collection of hyperplanes, prove Corollaries \ref{corBFHA} and \ref{corCO}, give a different proof of the fact the Multi-Variable Monodromy Conjecture holds for hyperplane arrangements, and prove Theorem \ref{thmSM}.

\subsection{Terminology.}\label{subTer}  Let 
$D=\cup_{j=1}^rD_j$ be a finite collection of hyperplanes in $X=\bC^n$. Let $f_j$ be a linear polynomial defining $D_j$. Let $f=\prod_jf_j^{m_j}$ with $m_j\ge 1$ be a possibly non-reduced polynomial such that the zero locus $V(f)$ is $D$. We call both $D$ and $f$ {\it hyperplane arrangements}. The 
 hyperplane arrangement is called \emph{central} if each
$D_j$ is a linear subspace of codimension one of $X$, that is, if $f$ is homogeneous.
 A central hyperplane arrangement $f$ is {\it
indecomposable} if  there is no linear change of coordinates
on $X$ such that $f$ can be written as the product of two
non-constant polynomials in disjoint sets of variables. Note that indecomposability is a property of the underlying reduced  zero locus $D$ of $f$.
An {\it edge} is
an intersection of hyperplanes $D_j$.
An arrangement is \emph{essential} if $\{0\}$ is an edge.
For a linear subset $W\subset X$, the {\it restriction of $D$ to $W$} is the hyperplane arrangement $D_{W}$ given by
the image of $\cup_{D_j\supset W}D_j$ in the vector space quotient $X/W$, which is defined as soon as we make a linear change of coordinates such that $0\in W$.  Similarly, one defines {\it the restriction $f_W$ of $f$ to $W$} as a polynomial map on $X/W$, by keeping track of the multiplicities along the hyperplanes $D_j$ which contain $W$. An edge $W$ of $D$ is
called {\it dense} if the restriction arrangement ${D}_{W}$  is
indecomposable. For example, $D_j$ is a dense edge for every $j$.   The {\it canonical log resolution} of $D$ is the map
$\mu:Y\ra\bC^n$ obtained by composition of, in increasing order for $i=0,1,\ldots,n-2$, the blowups along
the (proper transform of) the union of the dense
edges of dimension $i$. 

\begin{thm}\label{propDP} ( \cite[Theorem 3.1]{STV})
The canonical log resolution $\mu:Y\ra X$ is a log resolution of $f$.
\end{thm}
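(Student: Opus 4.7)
The plan is to induct on the dimension $i = -1, 0, 1, \ldots, n-2$ of dense edges blown up. Let $\mu_i : Y_i \to X$ denote the composition of blowups along (proper transforms of) all dense edges of dimension at most $i$, so that $\mu_{-1} = \mathrm{id}_X$ and $\mu = \mu_{n-2}$. I would carry the following inductive claim: $Y_i$ is smooth, and at every point $p \in Y_i$, after an \'etale or formal change of coordinates, the total transform of $D$ together with the exceptional locus of $\mu_i$ is a product of a transverse (SNC) piece with the cone on a central hyperplane arrangement $\cA_p$ whose dense edges all have dimension strictly greater than $i$. Note once the claim is established for $i = n-2$, the only dense edges of $\cA_p$ have dimension $n-1$, i.e., are hyperplanes themselves, which forces $\cA_p$ to consist of at most $n$ transverse hyperplanes; hence the total transform is SNC. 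Since $V(f) = D$ as sets, the multiplicities $m_j$ affect neither the SNC property of the reduced preimage nor the fact that $\mu$ is an isomorphism over $U = X \setminus D$, so this yields the log resolution property for $f$.

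The base case $i = -1$ is immediate with $\cA_p$ the localization of $D$ at $p$. For the inductive step, I would fix a point $p \in Y_i$ lying on the proper transform of a dense edge $W$ of dimension $i+1$ in the local picture $\cA_p$; away from such edges the claim is already inherited from $Y_{i-1}$ (blowups of disjoint smooth centers preserve SNC). Here the key combinatorial input is the classical blowup-analysis of De~Concini--Procesi and Schechtman--Terao--Varchenko: in the affine chart of the blowup of $W$ centered at a hyperplane $H$ of $\cA_p$ through $W$, the proper transforms of hyperplanes containing $W$ either coincide with $H$ or pull back to hyperplanes defining the restriction arrangement $D_W$ (times a trivial factor), while proper transforms of hyperplanes not containing $W$ remain as hyperplanes transverse to the exceptional divisor in suitable coordinates. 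One must then check two things: (a) the resulting local arrangement is again a central arrangement times a smooth factor, and (b) no dense edge of dimension $\le i+1$ survives in this new local arrangement. Point (a) is a direct coordinate computation; point (b) is where density is essential and is the combinatorial heart of the argument: an edge that remains dense after blowup of $W$ must correspond to an edge of the original arrangement that was not contained in any dense edge of dimension $\le i+1$, and density is preserved by passage to restrictions under such blowups.

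The main obstacle is precisely step (b): tracking how the poset of dense edges behaves under successive blowups of dense centers. This is exactly the content of \cite{STV} (and is parallel to the wonderful compactification construction), so in a writeup I would either cite it directly or reproduce the short lemma identifying dense edges of the new local arrangement with dense edges of the original arrangement of dimension $> i+1$. Once this is in hand, the induction closes and the theorem follows. Finally, to conclude that $\mu$ is a log resolution of $f$ rather than merely of $D$, one observes that $\mu^{-1}(V(f))_{\mathrm{red}} = \mu^{-1}(D)_{\mathrm{red}}$ and that $\mu$ is an isomorphism over $U$ by construction (each blowup center is contained in $D$).
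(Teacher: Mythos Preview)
The paper does not give its own proof of this statement: it is recorded as a citation of \cite[Theorem 3.1]{STV} and no argument is supplied. So there is nothing in the paper to compare your proposal against beyond the reference itself.

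Your sketch is a reasonable outline of the standard De~Concini--Procesi/STV argument, and you have correctly isolated the nontrivial point: the combinatorial tracking of dense edges under blowup (your step (b)). Your terminal observation --- that if the only dense edges of a central arrangement are the hyperplanes then the arrangement is normal crossing --- is correct: any edge $W$ of codimension $\ge 2$ has $D_W$ decomposable, and iterating the decomposition into indecomposable factors produces dense edges, which by hypothesis must be hyperplanes, forcing $D_W$ to be a Boolean arrangement. One small point of phrasing: in the inductive claim you want the local model at $p\in Y_i$ to be, in suitable analytic coordinates, the product of an SNC divisor with a central arrangement whose dense edges all have \emph{codimension} at most $n-1-i$ in the ambient quotient (equivalently, dimension in $X$ greater than $i$); as written, ``dimension strictly greater than $i$'' is ambiguous since the ambient space changes along the induction. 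Also, the statement that the proper transforms of dense edges of dimension $i+1$ are smooth and disjoint in $Y_i$ (so that the next blowup has smooth center) is part of what must be verified inductively and deserves a sentence. With those clarifications, citing \cite{STV} for step (b) as you propose is exactly what the paper does wholesale.
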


\begin{prop}\label{lemIndecChi} (\cite[Proposition 2.6]{STV}) If $f$ is a central hyperplane arrangement and  $V=\bP^{n-1}- \bP (D)$, then
$f$ is indecomposable if and only if $\chi (V)\ne 0$.
\end{prop}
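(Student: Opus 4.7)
The plan is to express $\chi(V)$ as a combinatorial invariant of the intersection lattice of $D$, and then match that invariant with indecomposability via the classical theorem of Crapo on the beta invariant of a matroid.

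First I would set up the characteristic polynomial
$$\chi_D(t) = \sum_{W \in L(D)} \mu(\hat 0, W)\, t^{\dim W}$$
of the intersection lattice $L(D)$. By the Orlik--Solomon theorem, the Poincar\'e polynomial of $U = \bC^n \setminus D$ is $\pi(U,t) = (-t)^n \chi_D(-1/t)$. Since $D$ is central, the restriction of the tautological bundle on $\bP^{n-1}$ realizes $U$ as a trivial $\bC^*$-bundle over $V$, so $\pi(U,t) = (1+t)\pi(V,t)$; in particular $(t-1)$ divides $\chi_D(t)$. Writing $\chi_D(t) = (t-1)\overline{\chi}_D(t)$ and substituting, a direct computation yields $\pi(V,t) = (-1)^{n+1} t^{n-1}\overline{\chi}_D(-1/t)$, whence $\chi(V) = \overline{\chi}_D(1)$.

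For the easy implication, suppose $f$ decomposes as $f = g(x_1,\ldots,x_k)\,h(x_{k+1},\ldots,x_n)$ in disjoint coordinates. Then $\bC^n \setminus V(f) = U_g \times U_h$ and hence $\pi(U,t) = \pi(U_g,t)\,\pi(U_h,t)$. Each factor is the Poincar\'e polynomial of the complement of a central arrangement, so each is individually divisible by $(1+t)$ by the argument of the previous paragraph. Therefore $\pi(V,t) = \pi(U,t)/(1+t)$ is itself divisible by $(1+t)$, and $\chi(V) = \pi(V,-1) = 0$.

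For the non-trivial implication, I would invoke Crapo's theorem: for the matroid $M$ of any arrangement on at least two hyperplanes, the beta invariant $\beta(M)$ is non-zero if and only if $M$ is connected. The connectedness of $M(D)$ is equivalent to the indecomposability of $D$: a direct-sum decomposition of the matroid lifts, after a linear change of coordinates, to a factorization of $f$ in disjoint variable sets. A standard identity from Tutte/Crapo theory then gives $\overline{\chi}_D(1) = \pm \beta(M(D))$, up to a sign depending only on the rank. Combined with step one, this identifies $\chi(V)$ (up to sign) with $\beta(M(D))$, and Crapo's theorem finishes the equivalence.

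The main obstacle I anticipate is bookkeeping the various sign and normalization conventions along the chain $\chi(V) \leftrightarrow \overline{\chi}_D(1) \leftrightarrow \beta(M(D))$; the structural content itself (Orlik--Solomon, the lattice--matroid dictionary, and Crapo's theorem) is classical, but a sign error anywhere would flip the direction of the equivalence, so the careful verification of $\chi(V)=\pm\beta(M(D))$ is the step that demands real attention.
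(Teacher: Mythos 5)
The paper does not actually prove this statement; it simply cites \cite[Proposition 2.6]{STV}, so there is no proof in the paper to compare against. What you have written is a correct, self-contained argument, and it is essentially the standard textbook route: Orlik--Solomon gives $\pi(U,t)=(-t)^n\chi_D(-1/t)$, the trivial $\bC^*$-bundle $U\to V$ gives $\pi(V,t)=\pi(U,t)/(1+t)$, and the algebra you carry out correctly yields $\chi(V)=\overline\chi_D(1)$; your sign computation $\overline\chi_D(1)=(-1)^{r+1}\beta(M(D))$ with $r=\operatorname{rank}M(D)$ is also right, since $\chi_D(t)=(-1)^rT_M(1-t,0)$ and $\beta$ is the coefficient of $x$ in $T_M$. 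The easy implication via divisibility of $\pi(U,t)$ by $(1+t)^2$ is clean and correct.

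Two small points worth tightening. First, the translation between matroid connectedness and indecomposability of $f$ deserves one extra sentence: a direct-sum decomposition of $M(D)$ means the spans of the two sets of normal forms intersect trivially, and extending that pair of subspaces of $(\bC^n)^*$ to a direct-sum decomposition (and dualizing) supplies the linear change of coordinates; the converse direction is immediate. Second, Crapo's theorem as usually stated requires the ground set to have at least two elements, so you should note the one-hyperplane case separately (there $V\cong\bC^{n-1}$, $\chi(V)=1$, and a degree-one $f$ is trivially indecomposable); also note that arrangement matroids have no loops, which is the other hypothesis Crapo needs. With those remarks added, the proof is complete and correct.
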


\begin{rem}
The Euler characteristic of the complement of a hyperplane arrangement can be determined only from the lattice of intersections of the hyperplanes in the arrangement, see \cite{OTe}. Hence the previous Proposition also implies that indecomposability and density are combinatorial conditions.
\end{rem}

\subsection{Sabbah specialization complex for arrangements}
From now on we use the same setup as in \ref{subsSSP}. Assume that $f_j$ are central hyperplane arrangements in $X=\bC^n$, not necessarily reduced, of degree $d_j$.

The following two lemmas are immediate consequences of Corollary \ref{corZH}, Proposition \ref{propSuppHom}, and Proposition \ref{lemIndecChi}:

\begin{lem}\label{lemIndecMondromy} If $f=\prod_{j=1}^rf_j$ is an indecomposable central hyperplane arrangement, then 
$V(t_1^{d_1}\cdots t_r^{d_r}-1)\subset\Supp _0(\psi_{F}\bC_X).$ 
\end{lem}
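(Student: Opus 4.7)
The plan is to assemble this lemma directly from two pieces already proved in the paper, with essentially no new ingredients. The statement concerns the support at the origin of $\psi_F\bC_X$ for $F=(f_1,\ldots,f_r)$ where each $f_j$ is a (possibly non-reduced) central hyperplane arrangement of degree $d_j$; in particular, each $f_j$ is a non-zero homogeneous polynomial of degree $d_j$, so the ambient setup of Corollary \ref{corZH} applies verbatim.

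First I would invoke Proposition \ref{lemIndecChi}: since $f=\prod_{j=1}^r f_j$ is, by assumption, an indecomposable central hyperplane arrangement, the Euler characteristic $\chi(V)$ of the projective complement $V=\bP^{n-1}\smallsetminus \bP(D)$ is non-zero. Note that the indecomposability hypothesis is phrased in terms of the product $f$, so it is exactly the input required by Proposition \ref{lemIndecChi}, and $\bP(D)$ is the projectivization of $D=\bigcup_j V(f_j)$, which is precisely the arrangement associated to $f$.

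With $\chi(V)\ne 0$ in hand, and with each $f_j$ a non-zero homogeneous polynomial of degree $d_j$, both hypotheses of Corollary \ref{corZH} are met. That corollary then yields directly
\[
V(t_1^{d_1}\cdots t_r^{d_r}-1)\;\subset\;\Supp_0(\psi_F\bC_X),
\]
which is the desired conclusion. There is no real obstacle here: the content of the argument is entirely packaged in Corollary \ref{corZH}, whose proof rests on Sabbah's A'Campo-type formula (Theorem \ref{thmSab}) applied to a log resolution factoring through the blow-up at the origin, producing the factor $(t_1^{d_1}\cdots t_r^{d_r}-1)^{-\chi(V)}$ in the multi-variable monodromy zeta function $\zeta_0(\psi_F\bC_X)$; the non-vanishing $\chi(V)\ne 0$ supplied by indecomposability ensures this factor survives and forces the corresponding codimension-one torus into the support.
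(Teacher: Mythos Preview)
Your proof is correct and matches the paper's own approach: the paper states that this lemma is an immediate consequence of Corollary \ref{corZH} and Proposition \ref{lemIndecChi}, which is exactly the combination you use.
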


\begin{lem}\label{lemAllSupp} There is an equality in Lemma \ref{lemIndecMondromy}, if, in addition, $f$ is reduced.\end{lem}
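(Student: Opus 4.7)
The plan is to apply Proposition \ref{propSuppHom} essentially verbatim, after checking that the reducedness hypothesis puts us squarely in its setting.

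First, I would observe that reducedness of $f = \prod_{j=1}^r f_j$ forces each $f_j$ to be a non-zero linear form defining a hyperplane distinct from the other $f_{j'}$'s (otherwise a repeated factor would appear in $f$). In particular each $f_j$ is irreducible and homogeneous of degree $d_j = 1$, the hyperplanes $V(f_j)$ are mutually distinct, and trivially $\gcd(d_1,\ldots,d_r) = 1$. Thus the hypotheses of Proposition \ref{propSuppHom} are satisfied at the origin.

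Next, I would use indecomposability to feed the remaining hypothesis of Proposition \ref{propSuppHom}: by Proposition \ref{lemIndecChi}, the indecomposability of the central arrangement $f$ is equivalent to $\chi(V) \ne 0$, where $V = \bP^{n-1} \smallsetminus \bP(D)$. With this, the last sentence of Proposition \ref{propSuppHom} asserts the four-way equality
\[
\Supp_0(\psi_F \bC_X) \;=\; \cV(V) \;=\; L(V) \;=\; V(t_1^{d_1}\cdots t_r^{d_r} - 1),
\]
so in particular the inclusion of Lemma \ref{lemIndecMondromy} is an equality. This finishes the argument; there is no real obstacle here, since the work has already been done in Corollary \ref{corZH} (providing the $\supset$ inclusion via A'Campo's formula applied to the blow-up at the origin) and in Proposition \ref{propSuppHom} (whose inclusion $\Supp_0(\psi_F\bC_X) \subset L(V)$ comes from Lemma \ref{lemRed} and Corollary \ref{corFact}, matching the Alexander module description against the degree-one relation on $H_1(V,\bZ)$ from Example \ref{exaV}).
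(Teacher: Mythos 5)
Your very first step contains a genuine gap. You claim that reducedness of $f=\prod_{j=1}^r f_j$ forces each $f_j$ to be a single linear form, ``otherwise a repeated factor would appear in $f$.'' This inference is false in the setting of Section~\ref{subsHA}: the standing hypothesis there is that each $f_j$ is itself a central hyperplane arrangement of degree $d_j$, i.e.\ a product of linear forms, possibly of degree greater than $1$. Reducedness of $f$ only forces the totality of linear factors appearing across \emph{all} the $f_j$ to be pairwise distinct; it does not prevent a single $f_j$ from being, say, $y(x+y)$, which is reduced and homogeneous of degree $2$ but reducible. For such an $f_j$, the hypotheses of Proposition~\ref{propSuppHom} (which requires each $f_j$ to be \emph{irreducible}) fail, as do your claims that $d_j=1$ and $\gcd(d_1,\ldots,d_r)=1$, and the argument stalls.

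The fix is to route through the specialization machinery rather than apply Proposition~\ref{propSuppHom} directly. Let $g_1,\ldots,g_p$ be the distinct linear factors of $f$ (distinct precisely because $f$ is reduced) and set $\cG=(g_1,\ldots,g_p)$. Then $F=\cG^{M}$ is a non-degenerate specialization of $\cG$ via the matrix $M$ with entries $m_{jk}=1$ if $g_k\mid f_j$ and $0$ otherwise: each column of $M$ has a single $1$, and the row sums are $d_j>0$, so $M$ has full rank $r$ and the specialization is non-degenerate. Proposition~\ref{propSuppHom} now does apply to $\cG$ (each $g_k$ is irreducible, homogeneous of degree $1$, mutually distinct, $\gcd=1$), and combined with Proposition~\ref{lemIndecChi} (indecomposability of $f$ gives $\chi(V)\ne 0$) it yields $\Supp_0(\psi_{\cG}\bC_X)=V(u_1\cdots u_p-1)$. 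By Proposition~\ref{propSuppSp}, $\Supp_0(\psi_F\bC_X)=\phi_M^{-1}\bigl(V(u_1\cdots u_p-1)\bigr)$, and since $\phi_M$ sends $(\lambda_1,\ldots,\lambda_r)$ to $(\lambda_{j(k)})_k$ where $j(k)$ is the unique index with $g_k\mid f_{j(k)}$, this preimage is exactly $V(t_1^{d_1}\cdots t_r^{d_r}-1)$. Your remaining remarks (using Corollary~\ref{corZH} for the $\supset$ inclusion and the Alexander module description for the $\subset$ one) are sound, and your strategy is correct \emph{when each $f_j$ is a single hyperplane} -- which is in fact the only case in which the paper later invokes the lemma -- but that hypothesis has to be earned, not extracted from reducedness of $f$ alone.
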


\begin{df} We say that polynomial map $F=(f_1,\ldots ,f_r)$ with $f_j\in\bC[x_1,\ldots,x_n]$ {\it splits into} $G\cdot H$, and that $G\cdot H$ is a {\it splitting} of $F$, if, up to a different choice of coordinates, there exists $m$ with $1\le m\le n$ and there are polynomials $g_j(x_1,\ldots ,x_m)$ and $h_j(x_{m+1},\ldots,x_n)$ for $j=1,\ldots, r$, such that not all $g_j$ are constant, not all $h_j$ are constant, and $f_j=g_jh_j$. If so, we set $G=(g_1,\ldots,g_r)$ and  $H=(h_1,\ldots ,h_r)$. Otherwise, we say that $F$ is {\it does not split}. We say that a splitting
$$
F=F^{(1)}\cdot\ldots\cdot F^{(l)}
$$
is {\it total} if each $F^{(i)}$ does not split. 
\end{df}

Let $F=(f_1,\ldots, f_r)$ be such that $f_j$ are linear forms defining mutually distinct hyperplanes. Up to multiplication by constants, a total splitting of $F$ is unique. For an edge $W$ of the hyperplane arrangement $f=\prod_{j=1}^rf_j$, let 
$$
F_W=(f_{1,W},\ldots ,f_{r,W}) : X/W \lra S^*,
$$
where $f_{j,W}$ is the restriction of the hyperplane arrangement $f_j$ to $W$ as defined in \ref{subTer}. More precisely, $f_{j,W}={f_j}_{| X/W}$ if $f_j(W)=0$, and $f_{j,W}=1$ otherwise. Note that $W$ is a dense edge if and only if $F_W$ does not split. For every edge, let 
$$
F_W=\prod_{i=1}^{l_W}F_W^{(i)}
$$
be a total splitting of $F_W$. If we set $F_W^{(i)}=(f_{1,W}^{(i)},\ldots ,f_{r,W}^{(i)})$, let
$$
d_{j,W}^{(i)}:=\deg f_{j,W}^{(i)}.
$$
Note that $d_{j,W}^{(i)}$ is either 0 or 1.

\begin{prop}\label{propBFHA} (a) If  $f_j$ are linear forms defining mutually distinct hyperplanes, then 
\begin{equation}\label{eqFHA}
\bigcup_{x\in D}\Supp ^{unif}_x (\psi_F\bC_X) = \bigcup_{W} V(\id{t_1^{d_{1,W}^{(i)}}\ldots t_r^{d_{r,W}^{(i)}}-1\mid i=1, \ldots ,l_W}),
\end{equation}
where the union is over the edges $W$ of the hyperplane arrangement $\prod_{j=1}^rf_j$. In particular, the codimension-one part is the zero locus in $S^*$ of 
$$
\prod_{W} \left (  \prod_{j\ :\ f_j(W)=0 } t_j -1  \right ),
$$
where the first product is over dense edges $W$ of $f=\prod_jf_j$. 

(b) Assuming Conjecture \ref{conj2}, formula (\ref{eqFHA}) also holds for 
$\Exp(V(B_F))$. 
\end{prop}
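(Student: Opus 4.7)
The overall strategy for (a) is to localize at a generic point of each edge, reducing to the central case at the origin, use the total splitting to express $F_W$ as a coordinate-wise product of indecomposable factors, and apply Lemma \ref{lemAllSupp} to each factor. Part (b) then follows formally from (a) together with Conjecture \ref{conj2} and the globalization $V(B_F) = \bigcup_{x \in X} V(B_{F,x})$ from \cite[Corollary 6]{BO}.

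For the localization step of (a), fix a point $x$ lying in the relative interior of an edge $W$ of $f = \prod_j f_j$. Choose linear coordinates so that $x = 0$ and $W$ is spanned by the first $\dim W$ of them. Then the $f_j$ with $f_j(W) = 0$ depend only on the transversal coordinates $X/W$ (and coincide there with $f_{j,W}$), while the remaining $f_j$ are non-vanishing in a neighborhood of $x$. Since Definition \ref{dfUS} of the uniform support discards precisely the data of the components of $F$ that are non-vanishing at the base point,
$$\Supp^{unif}_x(\psi_F\bC_X) = \Supp^{unif}_0(\psi_{F_W}\bC_{X/W}).$$
Every point of $D$ lies in the relative interior of a unique edge, so the union over $x \in D$ becomes a union over edges $W$, reducing (a) to computing $\Supp^{unif}_0(\psi_{F_W}\bC_{X/W})$ for each edge $W$.

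To compute this, observe that the total splitting $F_W = \prod_{i=1}^{l_W} F_W^{(i)}$ corresponds to a partition of the coordinates of $X/W$ into blocks $X^{(1)}, \ldots, X^{(l_W)}$, with $F_W^{(i)}$ depending only on $X^{(i)}$. Viewing $F_W$ as the coordinate-wise product of the $F_W^{(i)}$'s (all landing in the common torus $S^*$), iterating Proposition \ref{propMTS} yields
$$\Supp^{unif}_0(\psi_{F_W}\bC) = \bigcap_{i=1}^{l_W} \Supp^{unif}_0(\psi_{F_W^{(i)}}\bC).$$
By construction each $F_W^{(i)}$ does not split, so after removing its constant components (those $j$ with $d_{j,W}^{(i)} = 0$) it defines an indecomposable central reduced hyperplane arrangement in $X^{(i)}$. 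Lemma \ref{lemAllSupp} gives $\Supp_0(\psi_{F_W^{(i)}}\bC) = V(t_1^{d_{1,W}^{(i)}} \cdots t_r^{d_{r,W}^{(i)}} - 1) \cap V(\langle t_j - 1 \mid d_{j,W}^{(i)} = 0\rangle)$, and passing to the uniform support drops the latter constraint, leaving
$$\Supp^{unif}_0(\psi_{F_W^{(i)}}\bC) = V(t_1^{d_{1,W}^{(i)}} \cdots t_r^{d_{r,W}^{(i)}} - 1) \ \subset \ S^*.$$
Intersecting over $i$ and unioning over $W$ gives the formula in (a).

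For the codimension-one claim, the index sets $J^{(i)} := \{j : d_{j,W}^{(i)} = 1\}$ form a nontrivial partition of $\{j : f_j(W) = 0\}$ as $i$ varies, so the $l_W$ hypersurfaces $V(\prod_j t_j^{d_{j,W}^{(i)}} - 1)$ are distinct and their intersection has codimension exactly $l_W$. A codimension-one component therefore forces $l_W = 1$, equivalently, $W$ being a dense edge of $f$. Part (b) is then immediate from (a): applying $\Exp$ to $V(B_F) = \bigcup_x V(B_{F,x})$ and invoking Conjecture \ref{conj2} at each $x$ gives $\Exp(V(B_F)) = \bigcup_{y \in D} \Supp^{unif}_y(\psi_F\bC_X)$, which by (a) equals the right-hand side. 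The main obstacle will be the careful bookkeeping in the localization step and verifying the non-degeneracy of the specializations so that Proposition \ref{propMTS} applies iteratively; once these are in place, the combinatorics of total splittings and the invocation of Lemma \ref{lemAllSupp} on each indecomposable factor are routine.
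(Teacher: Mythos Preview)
Your proposal is correct and follows essentially the same approach as the paper's proof: localize at a generic point of each edge $W$ to reduce to the central case, then apply the multiplicative Thom--Sebastiani property (Proposition \ref{propMTS}) along the total splitting of $F_W$, and invoke Lemma \ref{lemAllSupp} on each indecomposable factor, with the uniform-support definition absorbing the constant components. Your treatment of the codimension-one statement (forcing $l_W=1$, i.e.\ $W$ dense) and of part (b) via Conjecture \ref{conj2} and the globalization $V(B_F)=\bigcup_x V(B_{F,x})$ is also in line with the paper; the non-degeneracy concern you flag for Proposition \ref{propMTS} is not an issue, since the coordinate-wise multiplication map always yields a non-degenerate specialization.
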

\begin{proof} First, we assume that $f=\prod_{j=1}^rf_j$ is a central hyperplane arrangement. Then
$$
\bigcup_{x\in D}\Supp ^{unif}_x (\psi_F\bC_X) =\Supp ^{unif}_0 (\psi_F\bC_X) \cup \bigcup_{0\ne y\in D}\Supp ^{unif}_y (\psi_F\bC_X).
$$

Let us focus on the first term of the right-hand side. Let 
$$
F_0=F=F_0^{(1)}\cdot\ldots\cdot F_0^{(l_0)}
$$
be a total splitting of $F_0$, with $F_0^{(i)}$ defined on $X_i$, and $X=\times_{i=1}^{l_0}X_i$. By Proposition \ref{propMTS},  we have
$$
\Supp ^{unif}_0 (\psi_F\bC_X) = \bigcap_{i=1}^{l_0} \Supp ^{unif}_0 (\psi_{F_0^{(i)}}\bC_{X_i}).
$$
Since $F_0^{(i)}$ is does not split, the hyperplane arrangement $\prod_{j=1}^rf_{j,0}^{(i)}$ is indecomposable. Hence, by Proposition \ref{lemIndecChi}, for each $F_0^{(i)}$ we are in the case Lemma \ref{lemAllSupp}. Thus,
$$
\Supp _0 (\psi_{F_0^{(i)}}\bC_{X_i}) =  V(t_1^{d_{1,0}^{(1)}}\ldots t_{r}^{d_{r,0}^{(l_0)}}-1)
$$
inside the torus
$$
\Spec(\bC[t_1^{\pm d_{1,0}^{(1)}},\ldots ,t_r^{\pm d_{r,0}^{(l_0)}}]).
$$
By the definition of the uniform support, $\Supp _0 (\psi_{F_0^{(i)}}\bC_{X_i})$ has the same equations, but inside the possibly-bigger torus
$$
\Spec(\bC[t_1^{\pm 1},\ldots ,t_r^{\pm 1}]) = S^*.
$$
Thus,
$$
\Supp ^{unif}_0 (\psi_F\bC_X)   =V(\id{t_1^{d_{1,0}^{(1)}}\ldots t_{r}^{d_{r,0}^{(l_0)}}-1\mid i=1,\ldots , l_0}).
$$
The rest of the claim follows by replacing $W=0$ in the above argument with other edges $W$ of the hyperplane arrangement $\prod_{j=1}^rf_j$.

If $f$ is not central, fix $x\in D$. The above argument for the central case gives the equations of the support of $\psi_F\bC_X$ at $x$ inside the torus 
$$
\bT_x=V(\id{t_j-1\mid f_j(x)\ne 0})\subset S^*.
$$
By the definition of the uniform support, $\Supp ^{unif}_x (\psi_F\bC_X)$ has the same equations in $S^*$, and the claim follows.
\end{proof}

\begin{rem}\label{remRel}
If $F=(f_1,\ldots ,f_r)$ is such that $\prod_{j=1}^rf_j$ is a possibly non-reduced hyperplane arrangement, then $F$ is the specialization of a polynomial map satisfying the conditions of Proposition \ref{propBFHA} up to the harmless appearance of additional constant polynomials, as explicited in Lemma \ref{lemRedSp}. Since we know how the supports behave under specialization, there is no mystery then what happens in Proposition \ref{propBFHA} after dropping the conditions.
\end{rem}

\subsection{Proof of Corollary \ref{corBFHA}.}
This is an immediate consequence of Proposition \ref{propBFHA} and Theorem \ref{thmConjIn}. $\Box$

\subsection{Proof of Corollary \ref{corCO}.} 
We can assume that $f$ is reduced. The general case will follow from this case in light of Remark \ref{remRel}. First, we need to clarify the notation used in the statement. With the notation as in Proposition \ref{propBFHA}, for an edge $W$ of $D$ let $f_W$ be the restriction of the hyperplane arrangement $f$ to $W$ as in \ref{subTer}. If
$$
f_W^{(i)}:=\prod_{j=1}^r f_{j,W}^{(i)},
$$
then
$$
f_W=\prod_{i=1}^{l_W}f_W^{(i)}
$$
is a total splitting of $f_W$. Let $$d_W^{(i)}:=\deg f_W^{(i)}=\sum_{j=1}^rd_{j,W}^{(i)}.$$

Let $f_j$ for $j=1,\ldots, r$, be the irreducible factors of $f$. Specialize $F=(f_1,\ldots, f_r)$ to $f=\prod_{j=1}^rf_j$ as in Example \ref{exaSpG}. As in the proof of Theorem \ref{thmSp}, 
$$\bigcup_{x\in D}\Supp ^{unif}_x (\psi_F\bC_X)$$
specializes via the restriction to the diagonal ${t_1=\ldots =t_r}$ to the set 
$$
\star =\bigcup_{x\in D}\bigcup_k\{\text{eigenvalues of monodromy on }H^{k}(M_{f,x},\bC) \},
$$
where $M_{f,x}$ is the Milnor fiber of $f$ at $x$, see Remark \ref{remOV}. Since
$$d_W^{(i)}=\sum_{j=1}^rd_{j,W}^{(i)},$$
we have
$$
\{t_1=\ldots =t_r\}\cap \bigcup_{x\in D}\Supp ^{unif}_x (\psi_F\bC_X)=\bigcup_WV(\id{t^{d_W^{(i)}}-1 \mid i=1,\ldots, l_W}),
$$ 
by Proposition \ref{propBFHA}. The conclusion follows from the fact that $\Exp( V(b_f))=\star$, by the theorem of Malgrange and Kashiwara. $\quad\Box$

\begin{thm}\label{thmMainThm} If each $f_j$ define a (possibly nonreduced) hyperplane arrangement in $\bC^n$, then Conjecture \ref{conjMVMonConj} holds.
\end{thm}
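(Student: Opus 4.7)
The plan is to combine the explicit form of the polar locus of $Z_F^{\,top}$ coming from the canonical log resolution with the combinatorial formula for the uniform support from Proposition \ref{propBFHA}.

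First I would invoke Theorem \ref{propDP} to observe that the canonical log resolution $\mu:Y\to X$ of $f=\prod_{j=1}^r f_j$ is a log resolution of $f$, so $Z_F^{\,top}$ can be computed from it. The irreducible components of $\mu^{-1}(D)_{\mathrm{red}}$ are in bijection with the dense edges $W$ of $f$: the codimension-one dense edges give the strict transforms of the hyperplanes themselves (with discrepancy $k_W=0$), and the higher-codimension dense edges give the exceptional divisors $E_W$. A standard blowup calculation yields $a_{W,j}=\deg(f_{j,W})$, the multiplicity of $f_j$ along $W$, and $k_W+1=\mathrm{codim}(W)$. Hence every linear denominator factor of $Z_F^{\,top}$ has the shape $\sum_{j}\deg(f_{j,W})\,s_j+\mathrm{codim}(W)$ for some dense edge $W$, and so
$$
PL(Z_F^{\,top})\ \subset\ \bigcup_{W\text{ dense}} V\!\left(\sum_{j=1}^r \deg(f_{j,W})\,s_j+\mathrm{codim}(W)\right).
$$

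Next I would exponentiate. Since $e^{2\pi i\,\mathrm{codim}(W)}=1$, the containment above becomes
$$
\Exp(PL(Z_F^{\,top}))\ \subset\ \bigcup_{W\text{ dense}} V\!\left(\prod_{j=1}^r t_j^{\deg(f_{j,W})}-1\right).
$$
By Proposition \ref{propBFHA}, extended to the possibly non-reduced setting through Remark \ref{remRel} (writing $F$ as a non-degenerate specialization of a reduced hyperplane arrangement map and applying Proposition \ref{propSuppSp}), for every dense edge $W$ one has $l_W=1$, and the binomial hypersurface $V(\prod_j t_j^{\deg f_{j,W}}-1)$ appears as a piece of $\Supp^{unif}_{x_W}(\psi_F\bC_X)$ for any point $x_W$ in the smooth stratum of $W$. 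Hence the right-hand side above lies inside $\bigcup_{x\in D}\Supp^{unif}_x(\psi_F\bC_X)$, which is exactly Conjecture \ref{conjMVMonConj} for $F$.

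The main obstacle, and essentially the only content beyond bookkeeping, is the verification of the numerical data $(a_{W,j},k_W)$ along the exceptional divisors of the canonical log resolution: one must confirm that the orders of vanishing along $E_W$ remain $\deg(f_{j,W})$ and $\mathrm{codim}(W)-1$ after the earlier stages of the resolution have blown up the lower-dimensional dense edges meeting $W$. This is a well-known transversality feature of the STV construction \cite[Theorem 3.1]{STV}. As a shortcut, the theorem also drops out instantly from Theorem \ref{thmR1} together with the single-variable Monodromy Conjecture for hyperplane arrangements proved in \cite{BMT}, since products of hyperplane arrangements are hyperplane arrangements; this, however, bypasses the combinatorial picture emphasized throughout the section.
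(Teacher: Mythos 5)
Your proposal is correct and is essentially the paper's proof: both use the STV canonical log resolution to place the polar locus inside the union $\bigcup_W V(\sum_j \deg(f_{j,W})s_j + \codim W)$ over dense edges, and both observe that $\Exp$ of each such hyperplane lands in the binomial torus $V(\prod_j t_j^{\deg(f_{j,W})}-1)$, which in turn sits in $\Supp^{unif}_{x}(\psi_F\bC_X)$ at a generic point $x$ of $W$. The only cosmetic difference is that you cite the full combinatorial formula of Proposition \ref{propBFHA} (and the specialization machinery of Remark \ref{remRel}), whereas the paper gets by with the lighter Lemma \ref{lemIndecMondromy} applied directly to the (possibly non-reduced) local restriction $f_W$, which avoids the reduction step; the key congruence and the identification of the relevant binomial are identical.
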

\begin{proof} We give another proof of this result, different than the one given by Theorem \ref{thmR1}. We deal first with the central case. For $j=1,\ldots , r$, let $f_j$ be central hyperplane arrangements in $X=\bC^n$. Using the canonical log resolution, we see that the polar locus
$
PL(Z_{F}^{\, top})
$
is a hyperplane sub-arrangement of  $\cup_WP_W$, with
$$P_W= \{a_{W,1}s_1+\ldots +a_{W,r}s_r+k_W+1=0\},$$ where $W$ varies over the dense edges of the hyperplane arrangement $D$, $a_{W,j}=\ord _{W}(f_j)$, and $k_W=\codim (W) -1$.

Fix a dense edge $W$, and the corresponding hyperplane $P_W$ which candidates for a component of the polar locus. Let $D_W$, $f_W$, $f_{j,W}$ be the restrictions of the hyperplane arrangements  $D$, $f$,  $f_j$, respectively, to $W$ as defined in \ref{subTer}.  We have $f_W=\prod_j f_{j,W}$, where the product is over those $j$ with $f_j(W)=0$. We can assume $\{j \mid f_j(W)=0 \}=\{1,\ldots , p\}$ for some integer $p$.
Now,
take a point $x\in W$ not lying on any hyperplane in $D$ which does not contain $W$. After choosing a splitting of $W\subset \bC^n$, we have locally around $x$,
${D}={D}_W\times W\subset \bC^n=\bC^n/W\times W$ and
$f=f_W\cdot u$, where $u$ is a (locally) invertible function. Hence, by Lemma \ref{lemIndecMondromy}, 
$$V(\prod_{j =1}^{p}t_j^{a_{W,j}}-1)\subset \Supp _x(\psi_{f_{W,1},\ldots ,f_{W,p}}\bC_{\bC^n/W})\subset (\bC^*)^{p}.$$
On the other hand, by  the definition of the uniform support, we have 
$$V(\prod_{j =1}^{p}t_j^{a_{W,j}}-1)\subset\Supp _x^{unif}(\psi_{F}\bC_X)\subset (\bC^*)^{p}\times (\bC^*)^{r-p}=S^*.$$
Let $\lam \in P_W$. Then
$$
\prod_{j=1}^r \left (e^{2\pi i \lam_j}\right )^{a_{W,j}} -1= e^{-2\pi i \cdot  codim(W)} -1 = 0.
$$
This shows that 
$$
\Exp ( P_W)\subset V(t_1^{a_{W,1}}\cdots t_r^{a_{W,r}} -1)\subset \Supp _x^{unif}(\psi_{f_1,\ldots ,f_r}\bC_X),$$
which was the claim.

The non-central case follows as in the proof of Proposition  \ref{propBFHA}. \end{proof}

\subsection{Proof of Theorem \ref{thmSM}.} 
We prove the claim for the case when $f=\prod_{j=1}^rf_j$ is central, since this implies the non-central case as well.  As in the proof of Theorem \ref{thmMainThm}, the polar locus  $
PL(Z_F^{\, top})
$
is included in the hyperplane arrangement  $\cup_WP_W$, with
$$P_W= \{a_{W,1}s_1+\ldots +a_{W,p}s_{p}+k_W+1=0\},$$ where $W$ varies over the dense edges of the hyperplane arrangement $D$, $a_{W,j}=\ord _{W}(f_j)$, and $k_W=\codim (W) -1$. Hence,  
$$P_W=\{\deg (f_{1,W})s_1+\ldots \deg (f_{p,W})s_{p}+\dim (\bC^n/W) =0 \}.$$
Note that $f_W=\prod_{j=1}^{p}f_{W,j}$ is indecomposable, and automatically central and essential. By assumption, $P_W$ is in the zero locus of the ideal $B_{F_{W}}$, where $F_W=(f_{1,W},\ldots ,f_{p,W})$ as before.

Now, as in the proof of Theorem \ref{thmMainThm}, take a point $x\in W$ not lying on any hyperplane in $D$ which does not contain $W$. We have $B_{F_{W}}=B_{F,x}$, and the zero locus of $B_{F,x}$ is included in the zero locus of $B_F$. The claim follows.
$\Box$

\section{Examples}\label{secEx}

\begin{exa}\label{exaIntr} Let $F=(x,y,x+y,z,x+y+z)$. Then the product of all entries of $F$ forms a central essential indecomposable hyperplane arrangement in $\bC^3$. The Bernstein-Sato ideal $B_{F}$  of $F$ is currently intractable via computer. However, Conjecture \ref{conj2}  predicts via Corollary \ref{corBFHA} that, in $(\bC^*)^5$,
\begin{equation}\label{eqE1}
\Exp( V(B_F))=V(\id{(t_1t_2t_3-1)(t_3t_4t_5-1)(t_1\ldots t_5-1)\prod_{j=1}^5(t_j-1)}).
\end{equation}
We can actually check this as follows. One can compute with {\tt dmod.lib} \cite{SingDmod}:
\begin{align*}
B_{F}^{\be_1} &=\id{(s_1+1)(s_1+s_2+s_3+2)(s_1+s_2+s_3+s_4+s_5+3)},\\
B_{F}^{\be_2} &=\id{(s_2+1)(s_1+s_2+s_3+2)(s_1+s_2+s_3+s_4+s_5+3)},\\
B_{F}^{\be_3} &=\id{(s_3+1)(s_1+s_2+s_3+2)(s_3+s_4+s_5+2)(s_1+s_2+s_3+s_4+s_5+3)},\\
B_{F}^{\be_4} &=\id{(s_4+1)(s_3+s_4+s_5+2)(s_1+s_2+s_3+s_4+s_5+3)},\\
B_{F}^{\be_5} &=\id{(s_5+1)(s_3+s_4+s_5+2)(s_1+s_2+s_3+s_4+s_5+3)}.
\end{align*}
Then (\ref{eqE1}) follows from Lemma \ref{lemStr} which says that $\Exp(V(B_F))=\bigcup_{j=1}^5\Exp(V(B_F^{\be_j}))$. Using a different computation, U. Walther has also checked that (\ref{eqE1}) holds. 

The local topological zeta function $Z^{\; top}_{F,0}$ of $F$ at the origin has a degree-7 irreducible numerator, and the denominator is equal to 
$$
 (s_1+s_2+s_3+2)(s_3+s_4+s_5+2)(s_1+\ldots +s_5+3)\prod_{j=1}^5(s_j+1).
$$
This illustrates the (local version of the) Multi-Variable Monodromy Conjecture which we proved for hyperplane arrangements. This also shows that Conjecture \ref{conjND} and the (local version of the) Multi-Variable Strong Monodromy Conjecture  hold for $F$.
\end{exa}

\begin{exa} Conjecture \ref{conjND}, which we proved to imply the Multi-Variable Strong Monodromy Conjecture for hyperplane arrangements, can fail for decomposable arrangements. Let $F=(x,y,x+y,z)$. Then the product of all entries of $F$ forms a central essential, but decomposable, hyperplane arrangement. The hyperplane $s_1+\ldots +s_4+3=0$ does not lie in the zero locus of the Bernstein-Sato ideal
$$B_{F}=\id{
(s_1+1)(s_2+1)(s_3+1)(s_4+1)\prod_{k=2}^4(s_1+s_2+s_3+k)}.
$$
\end{exa}

\begin{exa} If $F$ is as in Example \ref{exaNonP}, the multi-variable local topological zeta function at the origin is
\begin{align*}
Z^{\;top}_{F,0}(s)=\frac{2s_1s_2+s_2+1}{(s_1+1)(s_2+1)(2s_2+1)}.
\end{align*}
This illustrates the (local version of the) Multi-Variable Strong Monodromy Conjecture.
\end{exa}

\end{document}